\newcommand{\ol}{\overline}
\newcommand{\NN}{\mathbb{N}}
\newcommand{\CC}{\mathbb{C}}
\newcommand{\DD}{\mathbb{D}}
\newcommand{\RR}{\mathbb{R}}
\newcommand{\LL}{\mathbb{L}}
\newcommand{\cM}{\mathcal{M}}
\newcommand{\cP}{\mathcal{P}}
\newcommand{\cQ}{\mathcal{Q}}
\newcommand{\wh}{\widehat}
\theoremstyle{plain}
\newtheorem{theorem}{Theorem}[section]
\newtheorem{corollary}[theorem]{Corollary}
\newtheorem{proposition}[theorem]{Proposition}
\newtheorem{lemma}[theorem]{Lemma}
\newtheorem*{thm*}{Theorem}
\theoremstyle{definition}
\newtheorem{remark}[theorem]{Remark}
\newtheorem{definition}[theorem]{Definition}
\newtheorem{notation}[theorem]{Notation}
\newcommand{\Cat}{\mathbf{Cat}}
\newcommand{\op}{\mathrm{op}}
\newcommand{\sym}{\mathrm{sym}}
\newcommand{\cW}{\mathcal W}
\newcommand{\Fun}{\mathrm{Fun}}
\newcommand{\Hom}{\mathrm{Hom}}
\newcommand{\Seq}{{\mathbb S\mathrm{eq}}}
\DeclareMathOperator{\ob}{Ob}
\DeclareMathOperator{\dom}{dom}
\DeclareMathOperator{\Ob}{Ob}
\newcommand{\cC}{\mathcal C}
\newcommand{\dD}{\mathcal D}
\newcommand{\RCat}{{\mathbb R_+\text{-}\Cat}}
\newcommand{\cA}{\mathcal{A}}
\newcommand{\cV}{\mathcal{V}}
\newcommand{\cY}{\mathcal Y}
\newcommand{\cL}{\mathcal L}
\newcommand{\cB}{\mathcal B}
\newcommand{\cD}{\mathcal D}
\newcommand{\bR}{\mathbb{R}}
\newcommand{\bA}{\mathbb{A}}
\newcommand{\bB}{\mathbb{B}}
\newcommand{\bC}{\mathbb{C}}
\newcommand{\bD}{\mathbb{D}}
\newcommand{\bI}{\mathbb{I}}
\newcommand{\bJ}{\mathbb{J}}
\newcommand{\bL}{\mathbb{L}}
\newcommand{\Set}{\mathbf{Set}}
\newcommand{\Idem}{\mathbf{Idem}}
\newcommand{\Split}{\mathbf{Split}}
\newcommand{\Cof}{\mathcal C\mathrm{of}}
\newcommand{\Fib}{\mathcal F\mathrm{ib}}
\newcommand{\Inj}{\mathcal I\mathrm{nj}}
\newcommand{\Iso}{\mathcal I\mathrm{so}}
\newcommand{\Surj}{\mathcal S\mathrm{urj}}
\newcommand{\IdFib}{\mathcal I\mathrm d\mathcal F\mathrm{ib}}
\newcommand{\sseq}{\subseteq}
\newcommand{\spseq}{\supseteq}
\newcommand{\wt}{\widetilde}
\newcommand{\id}{\mathrm{id}}
\newcommand{\bN}{\mathbb{N}}
\newcommand{\into}{\hookrightarrow}
\renewcommand{\RR}{\mathbb{R}}
\newcommand{\vare}{\varepsilon}
\newcommand{\skipitems}[1]{%
	\addtocounter{\@enumctr}{#1}%
}
\def\on{\operatorname}
\def\scr{\EuScript}
\def\bb{\mathbb}
\title[Homotopical models for completeness]
 {Homotopical models for metric spaces and completeness}
\author[Dailey]{Isaiah Dailey}
\email{isaiah@ucsd.edu}
\author[Huggins]{Clara Huggins}
\email{clararh@umich.edu}
\author[Mujevic]{Semir Mujevic}
\email{semir\_mujevic@brown.edu}
\author[Shupe]{Chloe Shupe}
\email{cshupe@brynmawr.edu}
\address{%
}
\subjclass{18D20, 18N40}
\keywords{Lawvere metric spaces, model categories, Cauchy completion, enriched categories}
\date{\today}
\dedicatory{Last Revised: \today}
\begin{document}
%%% ----------------------------------------------------------------------
\begin{abstract}
Categories enriched in the opposite poset of non-negative reals can be viewed
as generalizations of metric spaces, known as Lawvere metric spaces. In this
article, we develop model structures on the categories $\RCat$ and $\RCat^\sym$
of Lawvere metric spaces and symmetric Lawvere metric spaces, each of which
captures different features pertinent to the study of metric spaces. More
precisely, in the three model structures we construct, the fibrant-cofibrant
objects are the extended metric spaces (in the usual sense), the Cauchy
compl/ete Lawvere metric spaces, and the Cauchy complete extended metric spaces,
respectively. Finally, we show that two of these model structures are unique in
a similar way to the canonical model structure on $\Cat$.
\end{abstract}
%%% ----------------------------------------------------------------------
\maketitle
%%% ----------------------------------------------------------------------
\tableofcontents

\section*{Introduction}

An unfortunate feature of most conventional categories of metric spaces is that they do not have particularly nice categorical properties. For instance, not all limits and colimits exist in categories of metric spaces. One way to alleviate these difficulties is to pass to a less restrictive notion of metric spaces. In his 1973 paper \cite{Lawvere}, Lawvere noted that the framework of enriched category theory provides a particularly convenient way to do just this. If one considers the interval $[0,\infty]$ as a sub-poset of the extended reals, and equips the opposite category with the (closed) monoidal structure given by addition, one obtains a category---herein denoted $\RR_+$---such that $\RR_+$-enriched categories are relaxed analogs of extended metric spaces. The resulting axioms require that the ``distance" from every point to itself is $0$ and that the ``metric'' satisfies the triangle inequality, but nothing more. 

Within this analogy, the enriched functors between $\RR_+$-enriched categories
correspond to Lipschitz maps of coefficient 1, also called \emph{short
maps}. The  corresponding category $\RR_+\text-\Cat$ of $\RR_+$-enriched
categories is substantially better suited to the application of categorical
techniques than is the corresponding subcategory of metric spaces. Categories
enriched over $\RR_+$---also sometimes called \emph{Lawvere metric
spaces}---are of interest in applied category theory (see,
e.g., \cite[pg. 60]{Fritz} or \cite[\S 2.3.3]{SpivakFongSketches}). However, the degree to which they offer a chance to apply categorical techniques to metric
analysis has been little explored outside of Lawvere's original work
\cite{Lawvere} and the subsequent paper \cite{BorceuxDejean}.

Both of these works concern themselves with the subject of \emph{Cauchy completions} of categories. Per \cite[\S 4]{BorceuxDejean}, given a complete and cocomplete symmetric monoidal closed category $\scr{V}$, the \emph{Cauchy completion} of a $\scr{V}$-enriched category $\scr{C}$ is the full $\scr{V}$-subcategory $\overline{\scr{C}}$ of $\scr{V}$-profunctors from the monoidal unit $I$ to $\scr{C}$ on those profunctors that have right $\scr{V}$-adjoints as described in \emph{op.\ cit}. Following \cite{Lawvere}, we refer to these right adjoints as \textit{duals}. There are a wide variety of equivalent characterizations of $\overline{\scr{C}}$, but for our purposes the key fact about $\overline{\scr{C}}$ lives in the $\RR_+$-enriched setting: If we begin with an $\RR_+$-category $\CC$ that corresponds to a genuine \emph{metric space}, then the Cauchy completion of $\CC$ as an $\RR_+$-category coincides with the completion of $\CC$ as a metric space.

The Cauchy completion is quite well-behaved categorically, mainly stemming from
its relation to the enriched Yoneda embedding. Of particular interest is
the fact that a functor $f:\CC\to \DD$ between $\RR_+$-categories induces a
functor $\overline{f}:\overline{\CC}\to \overline{\DD}$ between Cauchy
completions---effectively by left Kan extension. This construction is
pseudo-functorial, and behaves similarly to a concept from abstract homotopy
theory: that of a \emph{functorial fibrant replacement}. This latter notion is
a key part of the theory of \emph{model categories}, introduced by Quillen in
\cite[Ch. 1]{Quillen}.

\subsection*{The model structures}

Our aim in this article is to develop model categories that give insight into
the theory of various kinds of metric spaces specifically extended and Cauchy
complete metric spaces. To this end, we construct model structures on the two
closely related categories $\RR_+\text-\Cat$ and
$\RR_+\text-\Cat^{\on{sym}}$. The former is simply the category of (small)
$\RR_+$-enriched categories; the latter is the full subcategory on those
$\RR_+$-categories that are \emph{symmetric}, meaning that the hom-object (an
element of $\RR_+$) from an object $x$ to an object $y$ is the same as the
hom-object from $y$ to $x$. The subcategory $\RR_+\text-\Cat^{\on{sym}}$ is
closed under the formation of small limits and colimits, so no great difficulty
occurs in passing arguments between these two settings. These model structures
are constructed so that their fibrant and cofibrant objects are precisely the
extended and Cauchy complete metric spaces, similar to Mihara's study of
ultrametric spaces in \cite{Mihara}.

Our first main result is to establish that there is a \emph{unique} model
structure on each of these categories whose weak equivalences are the
$\bR_+$-enriched equivalences of categories and whose fibrant and cofibrant
objects are gaunt, meaning they have the property that two objects distance
zero apart are necessarily identical. In the case of
$\RR_+\text-\Cat^{\on{sym}}$, this means that the homotopy theory modeled by
the model structure is, in fact, the category of extended metric spaces with
short maps as morphisms. More precisely, we prove the following.

\begin{theorem}[The Metric model structure]
	There is a unique model structure on $\RR_+\text-\Cat$ (resp.\ $\RR_+\text-\Cat^{\on{sym}}$) such that
	\begin{enumerate}
		\item The weak equivalences are the fully faithful and essentially surjective $\RR_+$-functors, i.e., the equivalences of $\bR_+$-categories.
		\item Not every object is both fibrant and cofibrant.
	\end{enumerate}
	Moreover, the fibrant-cofibrant objects in this model structure are precisely the gaunt $\bR_+$-categories (resp.\ the symmetric gaunt $\bR_+$-categories).
\end{theorem}

The uniqueness result appears in the text as \autoref{thm:metric_MS_unique}, and existence is proven in \autoref{theorem:IoI_model_structure}, along with a description of the fibrations and cofibrations in this model structure. The other model structures in this article deal with Cauchy completeness in various ways. Because of the technical difficulties introduced by considering Cauchy sequences in the non-symmetric setting, we restrict our constructions to $\RR_+\text-\Cat^{\on{sym}}$. We first prove the following theorem.

\begin{theorem}[The Cauchy Model Structure]
  There is a model structure on $\RR_+\text-\Cat^{\on{sym}}$ such that 
	\begin{enumerate}
    \item The weak equivalences are the fully faithful and dense
    $\RR_+$-functors.
    \item The fibrant-cofibrant objects are the Cauchy-complete and symmetric
    $\RR_+$-categories.
	\end{enumerate}
\end{theorem}

This theorem is listed in the text as
\autoref{theorem:cauchy_model_structure}. Finally, combining the arguments used
to prove the existence of these two model structures, we construct a third
intermediate model structure.

\begin{theorem}[The Cauchy-Metric Model Structure]
	There is a unique model structure on $\RR_+\text-\Cat^{\on{sym}}$ such that 
	\begin{enumerate}
    \item The weak equivalences are the fully faithful and dense $\RR_+$-functors. 
		\item Every fibrant-cofibrant object is gaunt.
	\end{enumerate}
\end{theorem}

The existence of this model structure is proven in the text as
\autoref{theorem:cauchy_metric_model_structure}, and the uniqueness result is
proven in \autoref{lemma:cauchy_metric_uniqueness}. In each of our proofs, we
provide an explicit construction of the model structure and verification of the
axioms, to develop the detail that could become useful in later work. One
possible future direction of significant interest would be the development of
related model structures in more general enriched settings, perhaps for
categories enriched in quantales. 

\subsection*{Structure of the paper}
We begin by providing some background and preliminaries in
\Cref{section:preliminaries}, including the basics of $\RR_+$-categories and
Cauchy completions of $1$-categories and $\bR_+$-categories. In
\Cref{section:Karoubian}, we prove the existence of a model structure on $\Cat$
(the category of small categories). This model structure characterizes the
idempotent completion of categories, and we call it the \textit{Karoubian
model structure}, following \cite[\S2]{JcatLabModels}. The same model structure
was constructed in \cite[\S 1]{MoritaHmtpy} as a Bousfield
localization of the canonical model structure on $\Cat$, where it is called the
\emph{Morita model structure}. We provide an alternative proof for the
existence of this model structure; this proof additionally serves as a
``warm-up'' for our later work, as the Karoubian model structure closely
parallels our construction of the Cauchy model structure on $\RCat^\sym$.

We then turn our attention to $\RR_+$-categories, proving the three theorems
stated above. Each of the next three sections is devoted to the construction of
a model structure and verification of the concomitant axioms:
\Cref{section:metric_model_structure} for the structure that models extended
metric spaces, \Cref{section:cauchy_model_structure} for the structure that
models Cauchy-complete symmetric $\RR_+$-categories, and
\Cref{section:cauchy_metric_model_structure} for the structure that models
Cauchy-complete extended metric spaces. A number of analytically-flavored
results, mostly variants of well-known statements about metric spaces proven
for $\mathbb{R}_+$-categories, are collected in an appendix and referred to
throughout the article.

\section{Cauchy completion}\label{section:preliminaries}

This section will discuss the Cauchy completion, as described in \cite{Lawvere}, and later expanded by Borceux and Dejean in \cite{BorceuxDejean}. The first subsection, discusses the Cauchy completion of categories as described in \S1--2 of \textit{op.\ cit.} We define the first class of weak equivalences we will consider---termed \textit{pastoral equivalences}---as well as a couple characterizations of the definition. We will then use these functors to discuss the Cauchy completion of categories. This work will be important for our construction of the Karoubian model structure on $\Cat$ in \Cref{section:Karoubian}: The weak equivalences in this model structure are precisely the pastoral equivalences, and the fibrant-cofibrant objects are the Cauchy-complete categories.

In the second subsection, we discuss the theory of Cauchy completion in enriched category theory. This article will not consider these concepts in full generality, instead restricting our attention to categories enriched over a specific poset $\RR_+$. These ``$\bR_+$-categories'' will provide the analogs of metric spaces mentioned in the introduction. Along the way, we will draw comparisons between Cauchy completion in the set-enriched and $\bR_+$-enriched settings. This subsection, together with \autoref{app:analysis}, will recapitulate and further develop the framework of $\RR_+$-enriched categories of \cite{Lawvere,BorceuxDejean}. The appendix in particular is devoted to the proofs of generalizations of properties of limits in metric spaces to the setting of $\RR_+$-categories. While we will not consider the general enriched setting, we expect that some of the results in this and the following sections will generalize beyond $\RR_+$-categories.

\begin{remark}\label{rmk:small}
    All categories are assumed to be small unless stated otherwise. Note that the categories $\Set$, $\Cat$, $\RR_+\mathbf{-Cat}$, etc. are not small.
\end{remark}

\subsection{Cauchy completion of categories}

First, we give some important definitions. 

\begin{definition}
	Given a category $\cC$, let $\Set_\cC:= \text{Fun}(\cC^\op, \Set)$ denote the \textit{presheaf category} i.e., the category of functors from $\cC^\op$ to $\Set$.
\end{definition}

\begin{definition}
	We say that $F:\cC \to \cD$ is a \textit{pastoral equivalence} if the post-composition functor $F^*:\Set_\cD\to\Set_\cC$ is an equivalence of categories.
\end{definition}

Now, we prove an alternative characterization of pastoral equivalences. We
first introduce the categories $\Idem$ and $\Split$ to simplify work with split
idempotents. Moreover, we define Cauchy complete categories as categories where
all idempotents split.

\begin{definition}
    Define the category $\Idem$ to have a single object $0$ and a single non-identity morphism $e:0\to0$ with $e\circ e=e$. Define the category $\Split$ to be the category with two objects $0$ and $1$ freely generated by two arrows $p:0\to 1$ and $q:1\to 0$ such that $p\circ q=\id_1$. Let $\Sigma:\Idem\to\Split$ be the ``inclusion functor'' that takes $e$ to $q \circ p$. The functor $\Sigma$ is fully faithful.
\end{definition}

\begin{remark}
	Given a category $\cC$, a functor $F:\Idem\to\cC$ is precisely an idempotent in $\cC$. A functor $F:\Split\to\cC$ is a split idempotent in $\cC$, equivalently, a retract in $\cC$.
\end{remark}

The following definition is similar in form to the definition of surjective up to isomorphisms from the canonical model structure on $\Cat$ \cite{RezkCat} and behaves in many of the same ways.

\begin{definition}\label{def:surjective_up_to_retracts}
	Given a functor $F:\cC\to\cD$, we say that $F$ is \textit{surjective up to retracts} if every object in $\cD$ is a retract of an object in the image of $F$.
\end{definition}

We proceed to define Cauchy completions and their interactions with idempotents following \cite{BorceuxDejean} and \cite{Lawvere}.

\begin{definition}\label{def:cauchy_completion_category}
	Let $\cC$ be a category. We define its \textit{Cauchy completion}, denoted by $\overline\cC$, to be the full subcategory of $\Set_\cC$ on the objects that are retracts of objects in the image of the Yoneda Embedding $\cY:\cC\to\Set_\cC$.
\end{definition}

\begin{lemma}\label{lemma:Cauchy_completion_is_Cauchy_complete}
	Given a category $\cC$, every idempotent splits in $\overline\cC$. In other words, every functor $\Idem\to\overline\cC$ factors through a functor $\Split\to\overline\cC$ via $\Sigma$.
\end{lemma}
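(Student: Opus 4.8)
The plan is to split the idempotent in the ambient presheaf category $\Set_\cC$ and then observe that the splitting object automatically lands inside the full subcategory $\overline\cC$. Concretely, suppose we are handed a functor $F\from \Idem\to\overline\cC$; by Remark \ref{remark:functors_idem_split_defined} this is the same datum as an idempotent $e\from X\to X$ in $\overline\cC$, and what must be produced is an object $Y\in\overline\cC$ together with morphisms $p\from X\to Y$ and $q\from Y\to X$ with $q\circ p=e$ and $p\circ q=\id_Y$, since exactly such data assemble into a functor $G\from\Split\to\overline\cC$ satisfying $G\circ\Sigma=F$.

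First I would recall that every idempotent splits in $\Set_\cC$. Since $\Set_\cC=\Fun(\cC^\op,\Set)$ and (co)limits in functor categories are computed pointwise, it is enough to note that idempotents split in $\Set$: an idempotent $e\from S\to S$ splits through its set of fixed points $\{s\in S: e(s)=s\}$, with structure maps given by the corestriction of $e$ and the inclusion. Applying this objectwise to $e\from X\to X$ yields an object $Y\in\Set_\cC$ together with maps $p\from X\to Y$ and $q\from Y\to X$ in $\Set_\cC$ satisfying $qp=e$ and $pq=\id_Y$.

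Next I would check that $Y\in\overline\cC$, i.e.\ that $Y$ is a retract of a representable presheaf. By definition of $\overline\cC$, the object $X$ is a retract of some $\cY(c)$, so there are maps $i\from X\to\cY(c)$ and $r\from\cY(c)\to X$ with $ri=\id_X$. Since $Y$ is a retract of $X$ via $(p,q)$, composing retractions shows $Y$ is a retract of $\cY(c)$ via $iq\from Y\to\cY(c)$ and $pr\from\cY(c)\to Y$, because $(pr)(iq)=p(ri)q=pq=\id_Y$. Hence $Y$ lies in $\overline\cC$, and as $\overline\cC$ is a full subcategory the morphisms $p$ and $q$ may be regarded as morphisms of $\overline\cC$; this produces the desired functor $G\from\Split\to\overline\cC$ with $G\circ\Sigma=F$.

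The only actual content is the retract-of-a-retract bookkeeping in the last step; the splitting in $\Set_\cC$ is entirely formal (alternatively one could invoke completeness of $\Set_\cC$ and build the splitting as an equalizer of $e$ and $\id_X$). I do not anticipate a genuine obstacle here — the one place to be careful is keeping the direction of the retract maps straight (the representables are the ``large'' objects, and everything in $\overline\cC$ is by construction a retract of one), together with the trivial but necessary observation that fullness of the inclusion $\overline\cC\hookrightarrow\Set_\cC$ lets the constructed maps be interpreted internally to $\overline\cC$.
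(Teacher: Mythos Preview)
Your argument is correct. The paper itself does not give a proof at all: it simply cites \cite[Theorem 1, part 2]{BorceuxDejean} and moves on. By contrast, you supply a self-contained argument, splitting the idempotent pointwise in $\Set_\cC$ and then using the retract-of-a-retract observation together with fullness of $\overline\cC\subseteq\Set_\cC$ to pull the splitting back into $\overline\cC$. Your approach has the advantage of being elementary and transparent, requiring nothing beyond the definition of $\overline\cC$ and the trivial fact that idempotents split in $\Set$; the paper's approach has the advantage of brevity and of situating the result within the broader enriched-categorical framework of Borceux--Dejean, where the same statement is proved (in essentially the way you outline) as part of a more general characterization of Cauchy completions.
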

\begin{proof}
	This is part 2 of \cite[Theorem 1]{BorceuxDejean}.
\end{proof}

\begin{remark}
	The Yoneda embedding determines a fully faithful inclusion functor $\iota_\cC:\cC\into\overline\cC$ that is surjective up to retracts.
\end{remark}

We say a category $\cC$ is \textit{Cauchy complete} if $\iota_\cC$ is an equivalence of categories, equivalently, if every idempotent splits in $\cC$.

One may ask why the category $\ol\cC$ constructed in \autoref{def:cauchy_completion_category} deserves to be called the ``Cauchy completion'' of $\cC$. First, note that one characterization of $\ol\cC$ is as the completion of $\cC$ under absolute colimits\footnote{A colimit diagram is called absolute when it is preserved by every functor.} (see the discussion before \cite[Proposition 3]{BorceuxDejean}, or the discussion after the proof of \cite[Theorem 5.36]{kelly}). %In other words, there is an equivalence between the category of functors $F:\cC\to\cD$ such that $\cD$ contains the colimit of any absolute diagram in the image of $F$, and the category of functors $\ol\cC\to\cD$ whose inverse is given by restriction to $\cC$ via $\iota_\cC$ (\todo{Is there a better reference for this that isn't phrased in the more general enriched setting? Also, I hope I'm interpreting this theorem correctly.}\cite[Theorem 5.35]{kelly} with $\cA=\cC$, $\cV=\Set$ and $\cF=\{\Idem\}$). 
As a consequence of this characterization of $\overline{\mathcal{C}}$, we have
the following result, that tells us we may uniquely extend a functor
between two categories to a functor between their Cauchy completions.

\begin{proposition}\label{proposition:Induced_functor_Cauchy_completion}
	Let $F:\cC\to\cD$ be a functor. There exists a functor $\overline F:\overline\cC\to\overline\cD$ such that the following diagram commutes. 
	% https://q.uiver.app/#q=WzAsNCxbMCwwLCJcXGNDIl0sWzEsMCwiXFxjRCJdLFswLDEsIlxcb2xcXGNDIl0sWzEsMSwiXFxvbFxcY0QiXSxbMCwxLCJGIl0sWzAsMiwiXFxpb3RhX1xcY0MiLDJdLFsyLDMsIlxcb2wgRiJdLFsxLDMsIlxcaW90YV9cXGNEIl1d
	\[\begin{tikzcd}
		\cC & \cD \\
		\ol\cC & \ol\cD
		\arrow["F", from=1-1, to=1-2]
		\arrow["{\iota_\cC}"', from=1-1, to=2-1]
		\arrow["{\ol F}", from=2-1, to=2-2]
		\arrow["{\iota_\cD}", from=1-2, to=2-2]
	\end{tikzcd}\]
	Moreover, the functor $\overline F$ is unique up to unique natural isomorphism.
\end{proposition}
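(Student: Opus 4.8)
The plan is to build $\overline{F}$ as the restriction of the left Kan extension of $\iota_\cD \circ F$ along the Yoneda embedding $\cY_\cC$, and then check that this restriction lands in $\overline{\cD}$ and has the stated uniqueness. First I would recall that $\Set_\cC$ is the free cocompletion of $\cC$, so left Kan extension along $\cY_\cC \colon \cC \to \Set_\cC$ gives a colimit-preserving functor $\cY_\cC/(\iota_\cD \circ F) \colon \Set_\cC \to \Set_\cD$ together with a natural isomorphism filling the square with vertices $\cC$, $\cD$, $\Set_\cC$, $\Set_\cD$; moreover this colimit-preserving extension is unique up to unique natural isomorphism among colimit-preserving functors under $\cC$. (Alternatively, one can describe it concretely as $- \otimes_\cC (\iota_\cD F)$, but the universal property is what does the work.)

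Next I would show this functor sends $\overline{\cC}$ into $\overline{\cD}$. An object of $\overline{\cC}$ is a retract in $\Set_\cC$ of some representable $\cY_\cC(c)$. Any functor preserves retracts (they are equational: a pair $p, q$ with $q p = \id$), so the left Kan extension sends such an object to a retract of its value on $\cY_\cC(c)$, which by the filler isomorphism is isomorphic to $\cY_\cD(Fc)$, hence a representable in $\Set_\cD$. Thus the image is a retract of a representable, i.e.\ lies in $\overline{\cD}$. Restricting both the functor and the natural isomorphism gives $\overline{F} \colon \overline{\cC} \to \overline{\cD}$ with $\overline{F} \circ \iota_\cC \cong \iota_\cD \circ F$ (and one checks, or simply notes, that on nose-representables the filler can be taken to be the canonical comparison, so the square commutes up to the specified isomorphism; if strict commutativity is wanted one picks representatives so that $\overline{F}\iota_\cC = \iota_\cD F$ on objects and morphisms, using that Kan extension along a fully faithful functor can be chosen to strictly extend).

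For uniqueness up to unique natural isomorphism: suppose $G \colon \overline{\cC} \to \overline{\cD}$ also fits in the square. Post-compose with the inclusion $\overline{\cD} \hookrightarrow \Set_\cD$; since every object of $\overline{\cC}$ is a retract of a representable and $G$ preserves those retracts, $G$ is determined on all of $\overline{\cC}$ by its restriction to representables together with the retract data, and its restriction to representables is $\iota_\cD F$ up to the given iso. More cleanly: $\overline{\cC}$ is the idempotent-completion of (the image of) $\cC$, so by the universal property of idempotent completion (Lemma \ref{lemma:Cauchy_completion_is_Cauchy_complete} together with Remark \ref{remark:inclusion_Cauchy}, which say $\iota_\cC$ is the universal map into a Cauchy-complete category) any two functors out of $\overline{\cC}$ agreeing on $\cC$ up to isomorphism are isomorphic by a unique natural isomorphism restricting to the given ones. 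I would phrase the final argument through this universal property of $\iota_\cC$ rather than through Kan extensions, since it gives the uniqueness clause directly.

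The main obstacle is bookkeeping around strict versus pseudo commutativity of the square: left Kan extension only gives the triangle up to coherent isomorphism, so some care is needed either to massage the construction into one that strictly commutes (exploiting full faithfulness of $\cY_\cC$ and of $\iota_\cC$) or to interpret the statement's ``commutes'' as commuting up to the natural isomorphism that is part of the data — the rest is formal manipulation of universal properties and the observation that retracts are preserved by all functors.
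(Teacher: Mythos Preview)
Your proposal is correct. The paper's own proof is essentially a one-line citation: it invokes the characterization of $\overline{\cC}$ as the completion of $\cC$ under absolute colimits from \cite{BorceuxDejean}, and observes that the proposition is a direct consequence. Your argument unpacks exactly this: the left Kan extension along the Yoneda embedding is the free-cocompletion universal property, and restricting to $\overline{\cC}$ works because retracts (the absolute colimits in the $\Set$-enriched case) are preserved by every functor. Your uniqueness argument via the universal property of idempotent completion is likewise the content of the Borceux--Dejean reference.

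So the approaches coincide in substance; you are simply more explicit about the existence step, while the paper defers entirely to the literature. One small point worth tightening: when you invoke ``the universal property of idempotent completion'' for uniqueness, be aware that this universal property is essentially the proposition itself, so if you want a self-contained argument you should stick with your first uniqueness sketch (that splittings of a given idempotent are unique up to unique isomorphism, forcing any two extensions to agree). Your remark about strict versus pseudo commutativity is apt and more careful than the paper, which leaves this implicit.
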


As to why $\overline{\mathcal{C}}$ deserves to be called the \textit{Cauchy}
completion of $\cC$, we defer this discussion to the following subsection,
after we have developed the theory of $\bR_+$-enriched categories. To conclude
this subsection, we characterize the relationship between surjectivity up to
retracts, the Cauchy completion of categories, and pastoral equivalence. While
we expect the following statement is not novel, we have been unable to find a
citation and thus include the proof for the sake of completeness.

\begin{proposition}\label{proposition:surj_retract_equivalence}
	Let $F: \cC\to\dD$ be a functor between categories. Then the following are equivalent. 
	\begin{enumerate}
		\item The functor $F$ is fully faithful and surjective up to retracts.
		\item The induced functor $\overline F: \overline\cC\to\overline\dD$ on Cauchy completions is an equivalence of categories.
		\item $F$ is a pastoral equivalence.
	\end{enumerate}
\end{proposition}
\begin{proof}
  (1)$\implies$(2). Let $d$ be an object in $\overline{\scr{D}}$. Since
  $\iota_{\scr{D}}\circ F$ is surjective up to retracts and fully faithful,
  there is an idempotent $e_d:c_d\to c_d$ in $\scr{C}$ such that
  $\iota_{\scr{D}}(F(e_d))$ splits through the object $d$. By
  \autoref{lemma:Cauchy_completion_is_Cauchy_complete}, we may choose a
  splitting of $\iota_{\scr{C}}(e_d)$ through an object $b_d$ in
  $\overline{\scr{C}}$, thus, the idempotent
  $\overline{F}(\iota_{\scr{C}}(e_d))=\iota_{\scr{D}}(F(e_d))$ splits through
  both $d$ and $\overline{F}(b_d)$. It follows $\overline{F}(b_d)\cong d$, so
  $\overline{F}$ is essentially surjective. Moreover, since $\ol
  F\circ\iota_\cC=\iota_\cD\circ F$ and $\iota_\cC$ are fully faithful, $\ol F$
  must be as well.

	(2)$\implies$(1). As a composition of functors that are fully faithful and surjective up to retracts, $\iota_\cD\circ F=\ol F\circ\iota_\cC$ is likewise fully faithful and surjective up to retracts. Since $\iota_\cD$ and $\iota_\cD\circ F$ are fully faithful, $F$ must be as well. It remains to show $F$ is surjective up to retracts. Let $d$ be an object in $\cD$. We know $\iota_\cD\circ F$ is surjective up to retracts, so that $\iota_\cD(d)$ is a retract of some object $\iota_\cD(F(c))$ in the image of $\iota_\cD\circ F$. Fully faithful functors reflect retracts in their image so $d$ is necessarily a retract of $F(c)$ in $\cD$.
	
	(2)$\iff$(3). This follows from  applying the 2-functorial assignment $\Cat^\op\to\Cat$ sending $\cC\mapsto\Set_\cC$ and $F\mapsto F^*$ to the defining commutative diagram for $\overline{F}$ in 
	\autoref{proposition:Induced_functor_Cauchy_completion} together with \cite[Thm 1, (4)]{BorceuxDejean}.
 
\end{proof}

\subsection{Cauchy completion of \texorpdfstring{$\mathbb{R}_+$}{R+}-categories}

Now we proceed to our discussion of $\bR_+$-enriched categories and their
Cauchy completions. While we will not make heavy use of the theory of enriched
categories in this article, the interested reader can find a quite accessible
treatment in \cite{kelly}. Intuitively, given a monoidal category $\cV$ (that need not be small), a
$\cV$-enriched category $C$ should be thought of as a category that has
\textit{hom-objects} $\cV\text-\Hom_C(x,y)$ from $x$ to $y$ living in $\cV$,
rather than hom-sets. Furthermore, composition in $C$ is expressed via a
morphism 
\[
  \cV\text-\Hom_C(y,z)\otimes\cV\text-\Hom_C(x,y)\to\cV\text-\Hom_C(x,z)
\]
in $\cV$, where $-\otimes-:\cV\times\cV\to\cV$ is the monoidal product functor
in $\cV$. For example, from the enriched point of view, a (locally small)
category is simply a $\Set$-enriched category, a pre-additive category is an
$\mathbf{Ab}$-enriched category, and a (strict) $2$-category is a
$\Cat$-enriched category. Now we may define the enriching category that will be
important to us.

\begin{definition}
	The category $\bR_+$ is the opposite category associated with the partially ordered set $[0,\infty]$. That is, there is a unique morphism $a\to b$ in $\bR_+$ if and only if $a\geq b$.
\end{definition}

There is a product operation on this category: a functor \[\begin{tikzcd}[row sep=0em]
	(-)+(-): &[-3em] \bR_+\times\bR_+\arrow[r] & \bR_+ \\
	& (a,b) \arrow[r,mapsto] & a+b,
\end{tikzcd}\]
where we adopt the convention that $a+\infty=\infty+a=\infty$ for all $a\in\bR_+$. This product has a neutral element: the object $0$ in $\bR_+$. This is also the terminal object in $\bR_+$. Moreover, if we adopt the convention that $\infty-\infty=0$, $a-\infty=-\infty$, and $\infty-a=\infty$ for all $a\in(0,\infty)$, then we can define a functor
\[\begin{tikzcd}[row sep=0em]
	[-,-]: &[-3em] \bR_+\times\bR_+\arrow[r] & \bR_+\\
	& (a,b)\arrow[r, mapsto] & \text{max}(b-a,0),
\end{tikzcd}\]
so that there is a natural isomorphism
\[\Hom_{\bR_+}(a+b,c)\cong\Hom_{\bR_+}(a,[b,c]).\]

Note that $\bR_+$ is not closed under subtraction with the conventions given above. Furthermore addition does not ``associate'' with subtraction: It is not true that $(a+b)-c=a+(b-c)$ in $\bR_+$ (consider $a=b=c=\infty$). Despite these oddities, our conventions are justified by the fact that $[-,-]$ gives $\bR_+$ the structure of a (strict!) symmetric monoidal closed category.

\begin{definition}
	A category enriched over the monoidal category $(\RR_+,+,0)$ is called an \emph{$\RR_+$-enriched category}, or an \emph{$\RR_+$-category}. An enriched functor of $\RR_+$-enriched categories is an \emph{$\RR_+$-functor}.
\end{definition}

In accordance with \autoref{rmk:small}, we will only consider $\bR_+$-categories that are \textit{small}, i.e., those
$\bR_+$-categories $\bC$ whose collection of objects forms a set rather than a
proper class. For the definitions of enriched categories and enriched functors,
we refer the reader to \cite[\S1.2]{kelly}. In this case, an
$\bR_+$-category $\bC$ is the data of a set of objects $\ob\bC$, along with,
for each pair of objects $x$ and $y$ in $\Ob\bC$, an object $\bC(x,y)$ in
$\bR_+$, called the ``hom-object from $x$ to $y$''. In addition, for each
triple of objects $x$, $y$, and $z$ in $\bC$, we require that there exists a
morphism in $\bR_+$:
\[
  \bC(y,z)+\bC(x,y)\to\bC(x,z)
\]
i.e., we require that $\bC(y,z)+\bC(x,y)\geq\bC(x,z)$. Similarly, an
$\bR_+$-functor $f:\bC\to\bD$ is a function $f:\ob\bC\to\ob\bD$, along with,
for each pair of objects $x$ and $y$ in $\bC$, a morphism
 \[
   f_{x,y}:\bC(x,y)\to\bD(f(x),f(y))
 \]
in $\mathbb{R}_+$, i.e., we require that $\bC(x,y)\geq\bD(f(x),f(y))$. As a consequence of this characterization, when defining $\bR_+$-functors, we will
usually only define a function on objects and show it satisfies the required
inequality for hom-objects. 

\begin{remark}\label{rmk:Rfunctorsobjects}
	For categories $C$ and $D$ enriched over a monoidal category $\scr{V}$, an enriched functor $F:C\to D$ consists of a map $F:\on\Ob(C)\to \Ob(D)$ on objects, and morphisms of hom-objects in $\scr{V}$
	\[
	\begin{tikzcd}
		F_{a,b}:&[-3em] \scr{V}\on{-Hom}_C(a,b) \arrow[r] & \scr{V}\on{-Hom}_D(F(a),F(b))
	\end{tikzcd}
	\]
	for every pair of objects $a,b\in \on\Ob(C)$, satisfying compatibility conditions (see \cite[pg. 9]{kelly} for details). As such, two such enriched functors are equal precisely when their underlying maps on objects are equal, and for every pair of objects $a,b\in \Ob(C)$ the morphisms on hom-objects are equal. Since we are enriching in the \emph{poset} $\RR_+$, the latter condition is vacuous: any two morphisms from $a$ to $b$ in $\RR_+$ are equal. As such, to see that two $\RR_+$-functors are equal, it suffices to show that their underlying maps on objects are equal. 
\end{remark}

Lawvere's insight was that the definition of $\RR_+$-enriched categories can be
reformulated to be viewed as a weakening of the definition of metric spaces.

\begin{definition}
	A \textit{Lawvere metric space} $(X,d)$ consists of a set $X$, together with a function
	\[d:X\times X\to [0,\infty]\]
	satisfying the following conditions:\begin{itemize}
		\item For all $x\in X$, $d(x,x)=0$.
		\item (The triangle inequality) For all $x,y,z\in X$,
		\[d(x,y)+d(y,z)\geq d(x,z).\]
	\end{itemize}
\end{definition}

This is a weaker definition than that of a metric space: we allow for infinite distances, we do not require that $d(x,y)=0$ implies $x=y$, and do not require that $d(x,y)=d(y,x)$. 

\begin{remark}
	A $\bR_+$-enriched category is precisely the same thing as a Lawvere metric space. Similarly, an $\bR_+$ functor is precisely the data of a \textit{short map} $f:(X_1,d_1)\to(X_2,d_2)$, that is, a function $f:X_1\to X_2$ such that for all $x,y\in X_1$,
	\[d_1(x,y)\geq d_2(f(x),f(y)).\]
\end{remark}

Now, we introduce some terminology and notation concerning $\bR_+$-categories that will be useful later on.

\begin{definition}
    Given an $\bR_+$-category $\bC$ and objects $x$, $y$ in $\bC$, we say that $x$ and $y$ are \textit{isomorphic} (or \textit{indiscernible}) and write $x\cong y$ if 
	\[\bC(x,y)=0=\bC(y,x).\]
\end{definition}

\begin{definition}
	We say an $\bR_+$-category $\bC$ is \emph{gaunt} or \emph{satisfies the identity of indiscernibles} if, for every pair of objects $x$ and $y$  in $\bC$,
	\[x\cong y\iff x=y.\]
	
\end{definition}

\begin{definition}
    We say $\bC$ is \textit{symmetric} if
	\[\bC(x,y)=\bC(y,x)\]
	for all pairs of objects $x$ and $y$ in $\bC$.
\end{definition}

\begin{definition}
	Given an $\bR_+$-category $\bC$, we can form an $\bR_+$-category $\bC^\op$ with the same objects, and hom-objects given by 
	\[\bC^\op(x,y):=\bC(y,x).\]
	Note that $\bC=\bC^\op$ if and only if $\bC$ is symmetric.
\end{definition}

\begin{definition}\label{def:collected_R+defs}
	Let $f:\bC\to\bD$ be an $\bR_+$-functor. 
	\begin{enumerate}[label=(\arabic*)]
    \item We say $f$ is \textit{fully faithful} if
    $f_{x,y}:\mathbb{C}(x,y)\to\mathbb{D}(f(x),f(y))$ is an isomorphism in
    $\bR_+$ for all $x,y\in\bC$, i.e., if $\bD(f(x),f(y))=\bC(x,y)$ for all
    objects $x$ and $y$ in $\bC$.
		\item We say $f$ is \textit{essentially surjective} if for all objects $d$ in $\bD$, there exists some $c$ in $\bC$ such that $f(c)$ is isomorphic to $d$.
		\item We say that $f$ is an \textit{equivalence} of $\bR_+$-categories if it is both fully faithful and essentially surjective.
	\end{enumerate}
\end{definition}

Note that in the language of Lawvere metric spaces, a fully faithful $\bR_+$-functor is an \textit{isometry}, though we will use this term sparingly. Furthermore, if $f$ is an essentially surjective $\bR_+$-functor whose codomain is gaunt, then $f$ must be strictly surjective.

\begin{definition}
	We write $\bR_+\text-\Cat$ to denote the category of (small) $\bR_+$-categories, whose morphisms are $\bR_+$-functors. We further define $\bR_+\text-\Cat^\sym$ and $\bR_+\text-\Cat^\mathrm{gaunt}$ to be the full subcategories of $\bR_+\text-\Cat$ on the symmetric and gaunt $\bR_+$-categories, respectively.
\end{definition}

\begin{remark}\label{R+Cat_cocomplete}
  It follows from the general theory of enriched categories, including
  \cite[pg.\ 5]{R-CatComplete}, (or from a not-too-difficult direct
  construction) that $\RR_+\text-\Cat$ has all small limits and colimits. It is
  not hard to show that $\RR_+\text-\Cat^{\on{sym}}$ is closed under limits and
  colimits, and so is itself complete and cocomplete. Alternatively, in
  \cite[\S1]{Jardine}, Jardine explicitly constructs colimits in
  $\RR_+\text-\Cat^{\on{sym}}$, although there he calls symmetric
  $\RR_+$-categories \emph{ep-metric spaces}. 
\end{remark} 

One very important example of an $\bR_+$-category is the following.

\begin{definition}
	We can define a $\bR_+$-category whose objects are $[0,\infty]$ and the hom-object from $a$ to $b$ is given by $[a,b]:=\max(b-a,0)$.\footnote{Recall our convention for subtraction in $\bR_+$: $\infty-\infty=0$, $a-\infty=-\infty$, and $\infty-a=\infty$ for all $a<\infty$ in $\bR_+$.} By abuse of notation, we also use $\bR_+$ to denote this $\bR_+$-category.
\end{definition}

From this point onwards, we write $\bR_+$ to refer the $\bR_+$-enriched
category as defined above, not the poset $1$-category, unless stated
otherwise. Before discussing Cauchy completion, we give an $\bR_+$-enriched
definition of functor categories and the Yoneda embedding.

\begin{definition}
	Let $\bC$ and $\bD$ be two $\bR_+$-enriched categories. Define an $\RR_+$-category denoted $\bR_+\text-\Fun(\bC,\bD)$, or just $\bD^\bC$, whose objects are the class of $\bR_+$-functors from $\bC$ to $\bD$, and hom-objects are given by:
	\[\bD^\bC(f,g):=\sup_{c\in\bC}\bD(f(c),g(c)).\]
  It is straightforward to verify that the construction above indeed defines a
  valid $\bR_+$-category. Given an $\bR_+$-category $\bC$, we denote the
  category $\bR_+\text-\Fun(\bC^\op,\bR_+)$ by ${(\mathbb{R}_+)}_\bC$, and we
  refer to objects of this category as \emph{presheaves} on $\bC$.
\end{definition}

\begin{definition}\label{def:R-yoneda-embedding}
	Let $\bC$ be an $\bR_+$-category. Then the \emph{$\bR_+$-enriched Yoneda embedding} on $\bC$ is the assignment
	\[\begin{tikzcd}[row sep=0em]
		\cY_\bC: &[-3em] \bC\arrow[r] & (\mathbb{R}_+)_\bC \\
		& c\arrow[r, mapsto] & \bC(-, c),
	\end{tikzcd}\]
	where $\bC(-,c):\bC^\op\to\bR_+$ is the presheaf sending $c'\mapsto\bC(c',c)$.
\end{definition}

\begin{proposition}
	The $\bR_+$-enriched Yoneda embedding is a fully faithful $\bR_+$-functor.
\end{proposition}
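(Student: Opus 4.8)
The plan is to prove that the $\bR_+$-enriched Yoneda embedding $\cY_\bC \colon \bC \to (\bR_+)_\bC$ is fully-faithful, i.e.\ that for all $c, c' \in \bC$ the induced map of hom-objects is an isomorphism in $\bR_+$. Unwinding the definitions, the hom-object on the source side is $\bC(c,c')$, and the hom-object on the target side is the value
\[
(\bR_+)_\bC\bigl(\cY_\bC(c), \cY_\bC(c')\bigr) = (\bR_+)_\bC\bigl(\bC(-,c), \bC(-,c')\bigr) = \sup_{b \in \bC} \bR_+\bigl(\bC(b,c), \bC(b,c')\bigr) = \sup_{b \in \bC} \max\bigl(\bC(b,c') - \bC(b,c),\, 0\bigr).
\]
So the claim reduces to the equality $\bC(c,c') = \sup_{b \in \bC} \max\bigl(\bC(b,c') - \bC(b,c), 0\bigr)$, which is exactly the Lawvere metric space analogue of the classical statement that the distance between two points equals the sup over all ``test points'' of the difference of distances.

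The first step is to show that $\bC(c,c')$ is an \emph{upper bound} for the supremand, i.e.\ that for every $b \in \bC$ we have $\max(\bC(b,c') - \bC(b,c), 0) \le \bC(c,c')$. This follows from the triangle inequality in the form $\bC(b,c) + \bC(c,c') \ge \bC(b,c')$, hence $\bC(b,c') - \bC(b,c) \le \bC(c,c')$ (taking care of the $\infty$ conventions: if $\bC(b,c) = \infty$ the difference is $-\infty \le \bC(c,c')$, and if $\bC(b,c') = \infty$ then the triangle inequality forces $\bC(b,c) = \infty$ or $\bC(c,c') = \infty$, and in either case the inequality holds). Since $0 \le \bC(c,c')$ trivially, we conclude $\max(\ldots) \le \bC(c,c')$ for all $b$, so the sup is $\le \bC(c,c')$.

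The second step is to show the sup actually \emph{attains} (or at least is $\ge$) the value $\bC(c,c')$, which is done by plugging in the test point $b = c$: then $\max(\bC(c,c') - \bC(c,c), 0) = \max(\bC(c,c') - 0, 0) = \bC(c,c')$, so the supremand already equals $\bC(c,c')$ at $b = c$, forcing $\sup_{b} \ge \bC(c,c')$. Combining the two steps gives the desired equality, and hence $\cY_\bC$ is fully-faithful; the parenthetical ``an isometry'' is then just the translation of fully-faithfulness into the metric-space language recorded earlier in the text. I do not anticipate a serious obstacle here — the only thing requiring a little care is bookkeeping with the $\pm\infty$ conventions from the definition of $[-,-]$ on $\bR_+$, and one should double-check that $\bC(-,c)$ and $\bC(-,c')$ are genuinely $\bR_+$-functors $\bC^\op \to \bR_+$ (equivalently short maps), which is immediate from the triangle inequality, so that the expression $(\bR_+)_\bC(\cY_\bC(c), \cY_\bC(c'))$ is well-defined in the first place.
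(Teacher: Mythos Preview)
Your proof is correct and follows essentially the same approach as the paper: unwind the hom-object in $(\bR_+)_\bC$ as a supremum, bound it above by $\bC(c,c')$ via the triangle inequality (the paper phrases this as the ``reverse triangle inequality'' and invokes its arithmetic lemma for the $\infty$ cases), and bound it below by evaluating at $b=c$. Your handling of the $\pm\infty$ bookkeeping is slightly more explicit than the paper's, but the argument is the same.
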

\begin{proof}
	This is \autoref{prop:yoneda_fully_faithful} in the appendix.
\end{proof}

So far, we have discussed a bit of the general theory of $\bR_+$-enriched categories and $\bR_+$-enriched functors. In particular, we have seen that $\bR_+$-enriched categories are best thought of as generalizations of metric spaces. It remains to discuss the Cauchy completion of $\bR_+$-categories as constructed by Borceux and Dejean in \cite{BorceuxDejean}.

In \cite[Definition 1]{BorceuxDejean}, Borceux and Dejean more generally define
the $\mathcal{V}$-Cauchy completion of categories enriched in an arbitrary
enriching category $\cV$ (rather than just the case $\cV=\bR_+$). In the rather
technical definition (which we will not expand on here), they define the Cauchy
completion of a $\cV$-category $C$ to be the full $\cV$-subcategory of
the category $\cV\text-\mathrm{Prof}(I,C)$ (the category of
$\cV$-\textit{profunctors} from the monoidal unit to $C$) on the
$\cV$-profunctors that have right adjoints. In the case $\cV=\Set$, Borceux and
Dejean prove that this definition coincides exactly with our
\autoref{def:cauchy_completion_category} for the Cauchy completion of a
category (see \cite[Proposition
4]{BorceuxDejean}). In \autoref{alt_char:cauchy_completion}, we also prove a
nicer characterization of their definition in the $\bR_+$-enriched setting
(when $\cV=\bR_+$). 

In the beginning of the proof of \cite[Example 3]{BorceuxDejean}, Borceux and
Dejean further prove that given an $\bR_+$-category $\bC$, if the Lawvere
metric space associated with $\bC$ is an actual metric space, then the Cauchy
completion $\ol\bC$ of $\bC$ may be given as the full $\bR_+$-subcategory of
${(\bR_+)}_\bC$ on those presheaves that have \textit{duals}, in the following
sense.

\begin{definition}\label{def:dualpresheaf}
	Given a symmetric ${\bR_+}$-category $\bC$ and presheaves $f$ and $g$ in ${(\bR_+)}_\bC$, we say that $f$ and $g$ are \textit{dual} if
	\[0=\inf_{z\in\bC}(f(z)+g(z))\]
	and 
	\[f(c)+g(y)\geq\bC(x,y)\]
	for all $x,y\in\bC$.
\end{definition}

In fact, the same argument they give extends \textit{mutatis mutandis} to not
just $\bR_+$-categories that are metric spaces, but to all symmetric
$\bR_+$-categories. Thus, for symmetric $\bR_+$-categories, we can state
Borceux and Dejean's definition of the Cauchy completion as follows.

\begin{definition}\label{def:cauchy_completion}
  Given a symmetric $\bR_+$-category $\bC$, we define the
  \textit{Cauchy completion} of $\bC$ to be the full $\bR_+$-subcategory
  $\ol\bC$ of ${(\bR_+)}_\bC$ on those presheaves that have duals, in the sense
  of \autoref{def:dualpresheaf}.
\end{definition}

This definition is rather opaque, nor is it clear if $\ol\bC$ is a
``completion'' of $\bC$ in any sense. We will show that in the case the
underlying Lawvere metric space of $\bC$ is an actual metric space, the Cauchy
completion $\ol\bC$ of $\bC$ is isomorphic to the standard notion of the Cauchy
completion of $\bC$, thus justifying why this concept deserves to be called
``Cauchy completion''. To that end, the rest of this section will develop some
of the theory of analysis in $\bR_+$-categories/Lawvere metric spaces. Nearly
all of the proofs will be relegated to the appendix, but we include the
statements of those results that are relevant to our exposition. 

To start, we define sequences and subsequences in $\bR_+$-categories.

\begin{definition}
	Given an $\bR_+$-category $\bC$, a \emph{sequence} in $\bC$ is a countable collection of objects $x_1,x_2,x_3,\ldots$ in $\bC$ indexed by $\bN=\{1,2,3,\ldots\}$. We will denote sequences by $(x_n)_{n\in\bN}$, $(x_n)_{n=1}^\infty$, or just $(x_n)$. Occasionally, we may write $\overline x$ for the sequence $(x_n)$.

	Given a sequence $(x_n)_{n\in\bN}$ in an $\bR_+$-category $\bC$ and a sequence $(n_k)_{k\in\bN}$ of positive numbers such that $n_1<n_2<\cdots$, the sequence $(x_{n_k})_{k\in\bN}$ is called a \textit{subsequence} of $(x_n)$.
\end{definition}

Now, we may define a notion of Cauchy sequences and convergence.

\begin{definition}
	Given an $\bR_+$-category $\bC$ and a sequence $(x_n)$ in $\bC$, we say $(x_n)$ is a \emph{Cauchy sequence} if for all real numbers $\vare>0$ there exists some $N\in\bN$ such that
	\[n,m\geq N\ \implies\ \max(\bC(x_n,x_m),\bC(x_m,x_n))<\vare.\]
\end{definition}

\begin{definition}
	Given an $\bR_+$-category $\bC$, a sequence $(x_n)$ in $\bC$, and some object $c\in\bC$, we say that $c$ is a \textit{limit} of $(x_n)$ if for every $\vare>0$ there exists some $N\in\bN$ such that
	\[n\geq N\ \implies\ \max(\bC(x_n,c),\bC(c,x_n))<\vare.\]
	If $(x_n)$ has a limit, then we say that the sequence $(x_n)$ \textit{converges}. Furthermore, if a sequence $(x_n)$ in an $\bR_+$-category has a \textit{unique} limit $c$, we write
	\[\lim_{n\to\infty}x_n=c.\]
\end{definition}

Note that by unraveling definitions, the above definition may be expressed in
terms of limits in $\mathbb{R}_+$: given a sequence $(x_n)$ in a
$\mathbb{R}_+$-category $\mathbb{C}$, an element $c$ is a limit of $(x_n)$ if
and only if $0$ is a limit of both of the sequences
$(\mathbb{C}(x_n,c))_{n\in\mathbb{N}}$ and
$(\mathbb{C}(c,x_n))_{n\in\mathbb{N}}$.

Given an object $x$ in an $\bR_+$-category $\bC$, we will write $\wh x$ to
denote the constant sequence on $x$, which is clearly a Cauchy sequence with
a limit $x$. In a gaunt $\bR_+$-category, it can be seen that limits of sequences
are necessarily unique if they exist (\autoref{prop:isomorphic_limits}). In
particular, limits in $\bR_+$ are unique.

\begin{remark}
If we further adopt the convention that $|\infty|=|-\infty|=\infty$, then given $x,y\in\bR_+$ we have 
\begin{align*}
	\max({\bR_+}(x,y),{\bR_+}(y,x))&=\max(\max(y-x,0),\max(x-y,0))\\
	&=|x-y|=|y-x|.
\end{align*}
\end{remark}

In this way, for sequences of (non-infinite) real numbers, the usual notion of a limit corresponds with our definition of the limit given above. For this reason, when considering the convergence of sequences in $\bR_+$, we will replace the unwieldy $\max(\bR_+(x,y),\bR_+(y,x))$ in the preceding definitions with $|x-y|$. 

\begin{definition}\label{def:equivalence_of_Cauchy_seqs}
	Given an ${\bR_+}$-category $\bC$, we say that two sequences $(x_n)$ and $(y_n)$ are \textit{equivalent}, and write $(x_n)\sim(y_n)$, if
	\[\lim_{n\to\infty}\max(\bC(x_n,y_n),\bC(y_n,x_n))=0\]
	(where here the limit is taken in $\bR_+$). Unravelling definitions, $(x_n)$ and $(y_n)$ are equivalent if and only if, for all $\vare>0$, there exists some $N\in\bN$ such that
	\[n\geq N\implies\max(\bC(x_n,y_n),\bC(y_n,x_n))<\vare.\]
\end{definition}

In the case that $\bC$ is a metric space, the above definitions correspond
exactly to their usual meanings in analysis. Despite similarities between the
convergence of sequences in $\bR_+$-categories as opposed to metric spaces,
there are still distinctions. Namely, by our conventions for addition and
subtraction, a sequence in $\bR_+$ that grows arbitrarily does \textit{not}
converge to infinity. Rather, such a sequence does not converge at
all. Instead, the only sequences in $\bR_+$ that converge to infinity under our
definition are those that are eventually constant on $\infty$. Furthermore,
limits of sequences in $\mathbb{R}_+$-categories are, in general, not
unique. Indeed, consider the $\bR_+$ category $\bI=\{a_1,a_2\}$ where
$\bI(a_1,a_2)=0=\bI(a_2,a_1)$. It can be seen that every sequence in $\bI$
converges to both $a_1$ and $a_2$. These distinctions are important and
somewhat counterintuitive; we encourage the reader to keep them in mind.

Now, we continue our characterization of the Cauchy completion of symmetric
$\bR_+$-categories. For the remainder of this subsection, with the exception of
$\bR_+$, we will only consider \textit{symmetric} $\bR_+$-categories (although
we will still state this assumption when we use it). First, we define an
important presheaf associated to each Cauchy sequence in a symmetric
$\bR_+$-category $\bC$.

\begin{definition}\label{def:presheaf_associated_to_cauchy_sequence}
	Let $\bC$ be a symmetric ${\bR_+}$-category, and let $(x_n)$ be a Cauchy sequence in $\bC$. We fix notation and define a presheaf
	\[\ell_{(x_n)}:\bC^\op\to{\bR_+}\]
	by
	\[\ell_{(x_n)}(z):=\lim_{n\to\infty}\bC(z,x_n).\]
\end{definition}

The existence of the above limit is guaranteed by
\autoref{lemma:lim_of_cauchy_seqs_exists} from the appendix. The assignment
$\ell_{(x_n)}$ is proven to be an $\bR_+$-functor in
\autoref{prop:ell_x_is_short_map}. With these notions in hand, we may now state
the following proposition, which provides an alternative characterization of
\autoref{def:cauchy_completion}.

\begin{proposition}\label{prop:equivalent_Cauchy_characterization_fake}
	Let $\bC$ be a symmetric $\RR_+$-category and $f:\CC^\op\to \RR_+$ an $\RR_+$-enriched presheaf. The following are equivalent. 
	\begin{enumerate}
		\item The presheaf $f$ is a limit (in $(\RR_+)_\CC$) of a Cauchy sequence of objects in the image of the Yoneda embedding. 
		\item The presheaf $f$ has a dual, i.e., $f$ belongs to the Cauchy completion $\ol\bC$ of $\bC$ (\autoref{def:cauchy_completion}).
		\item There is a Cauchy sequence $(x_n)$ in $\bC$ such that $f=\ell_{(x_n)}$.
	\end{enumerate}
\end{proposition}
\begin{proof}
	This is \autoref{prop:equivalent_Cauchy_characterization} in the appendix.
\end{proof}

For the remainder of the article, we will nearly always use the
characterization of the Cauchy completion $\ol\bC$ as the $\bR_+$-category
whose objects are the presheaves $\ell_{(x_n)}$ for Cauchy sequences $(x_n)$ in
$\bC$, and whose hom-objects are given by
\[
  \ol\bC(\ell_{(x_n)},\ell_{(y_n)})
  =\lim_{n\to\infty}\bC(x_n,y_n).
\]

Recall that in the standard theory of metric spaces, given a metric space $(X,d)$, its \textit{Cauchy completion} is defined to be the metric space $(\ol X,\ol d)$, where $\ol X$ is the set of equivalence classes $[(x_n)]$ of Cauchy sequences in $(X,d)$, and the metric is defined by 
\[\ol d([(x_n)],[(y_n)]):=\lim_{n\to\infty} d(x_n,y_n).\]

\begin{proposition}\label{alt_char:cauchy_completion}
  If $\bC$ is a symmetric $\bR_+$-category whose underlying Lawvere metric
  space is an actual metric space, then the Cauchy completion $\ol{\bC}$
  defined in \autoref{def:cauchy_completion} corresponds precisely to the usual
  notion of the Cauchy completion of $\bC$ as a metric space.
\end{proposition}
\begin{proof}
In \autoref{prop:equiv-cauchy-same-presheaf}, we show that given two Cauchy sequences $(x_n)$ and $(y_n)$ in $\bC$, they are equivalent (in the sense of \autoref{def:equivalence_of_Cauchy_seqs}) if and only if their corresponding presheaves $\ell_{(x_n)}$ and $\ell_{(y_n)}$ are equal. Hence, by \autoref{prop:equivalent_Cauchy_characterization_fake}, the objects of $\ol\bC$ may be canonically identified with the equivalence classes of Cauchy sequences in $\bC$. The necessary characterization of the hom-objects is proven in \autoref{prop:Lmetric-on-cauchy-completion}.
\end{proof}

Note that given a symmetric $\bR_+$-category $\bC$ and an object $x$ in $\bC$,
it is straightforward to see from the definitions that the presheaf $\ell_{\wh x}$
associated to the constant Cauchy sequence $\wh x$ on $x$ is precisely the
presheaf $\cY_\bC(x)=\bC(-,c)$. As a consequence, by
\autoref{prop:equivalent_Cauchy_characterization_fake}, we have the following
remark, that introduces some notation for the functor
$\iota_{\mathbb{C}}:\mathbb{C}\to \overline{\mathbb{C}}$, which we will use
frequently in the remainder of the document.

\begin{remark}\label{remark:iota_C_Cauchy_completion}
  Given a symmetric $\bR_+$-category $\bC$, the $\bR_+$-enriched Yoneda
  embedding (\autoref{def:R-yoneda-embedding}) determines a canonical fully
  faithful inclusion $\bR_+$-functor $\iota_\bC:\bC\into\ol\bC$ sending an
  object $x$ to the presheaf $\ell_{\wh x}=\mathbb{C}(-,x)$.
\end{remark}

We say a symmetric $\bR_+$-category $\bC$ is \textit{Cauchy complete} if the
inclusion $\iota_\bC:\bC\into\ol\bC$ is an $\bR_+$-equivalence of categories,
or equivalently, if every Cauchy sequence in $\bC$ has a limit.  In
\autoref{prop:induced_R-functor_between_completions}, we show that given an
$\bR_+$-functor $f:\bC\to\bD$ between symmetric $\bR_+$-categories, there is a unique induced functor $\ol f:\ol\bC\to\ol\bD$ satisfying
$\overline{f}\circ\iota_{\mathbb{C}}=\iota_{\mathbb{D}}\circ f$. Furthermore,
in \autoref{prop:R_pastoral_equiv} we prove that $\ol f$ is an
$\bR_+$-equivalence of categories if and only if $f$ is fully faithful and
dense, in the sense of \autoref{def:dense_R+_functor}.

%\begin{proposition}
%	Given an ${\bR_+}$-functor $f:\bC\to\bD$ between small, symmetric $\bR_+$-categories, there exists a \emph{unique} induced ${\bR_+}$-functor $\overline f:\overline\bC\to\overline\bD$ such that the following diagram commutes.
%	\[\begin{tikzcd}
%		\bC & \bD \\
%		\overline\bC & \overline\bD
%		\arrow["f", from=1-1, to=1-2]
%		\arrow["{\iota_\bD}", from=1-2, to=2-2]
%		\arrow["{\iota_\bC}"', from=1-1, to=2-1]
%		\arrow["{\overline f}", from=2-1, to=2-2]
%	\end{tikzcd}\]
%\end{proposition}
%\begin{proof}
%	This is \autoref{prop:induced_R-functor_between_completions} in the appendix.
%\end{proof}

\section{The Karoubian model structure on \texorpdfstring{$\Cat$}{Cat}}\label{section:Karoubian}
Now we turn to homotopy theory. Our first result is a proof of the existence of
a model structure on $\Cat$ that we call the \emph{Karoubian model
structure}, following \cite[\S2]{JcatLabModels}. This model structure
characterizes the Cauchy completion of categories in that the
fibrant-cofibrant objects are the Cauchy
complete categories, as discussed in \Cref{section:preliminaries}. In
\cite[\S1]{MoritaHmtpy}, this model structure is called the \textit{Morita
model structure} on $\Cat$, and is constructed as a Bousfield localization of
the canonical model structure on $\Cat$. This section offers an
alternative approach to the construction of this model structure. For the
definition of a model structure, we refer the reader to \cite[\S2.1]{GoerssJardine}. We will use the numbering of the axioms given
there; it is also the numbering Rezk uses in the proof of the canonical model
structure on $\Cat$ in \cite{RezkCat}. To start, we define the weak
equivalences, fibrations, and cofibrations in the Karoubian model structure.

\begin{definition}
	We say that a functor $F$ is a \textit{idfibration} if it has the right lifting property with respect to the canonical inclusion $\Sigma:\Idem\to\Split$. Let $\IdFib$ denote the class of all idfibrations between categories.
\end{definition}

\begin{theorem}\label{theorem:Karoubian_model_structrue}
	There is a model structure on the category $\Cat$ of small categories such that  \begin{itemize}
		\item[($\cW$)] The weak equivalences are the pastoral equivalences.
		\item[($\Cof$)] The cofibrations are the functors that are injective on objects. 
		\item[($\Fib$)] The class of fibrations is given by $\IdFib$. 
	\end{itemize}
	Furthermore, the fibrant-cofibrant objects in this model structure are precisely the small Cauchy complete categories. We call this model structure the \emph{Karoubian model structure} on $\Cat$. 
\end{theorem}

The characterization of the fibrant-cofibrant objects is entirely
straightforward. Now we embark on a proof that the Karoubian model structure is
a model structure, by verifying the five required axioms. Throughout
this proof, we primarily use the characterization of pastoral equivalences as
fully faithful functors that are surjective up to retracts
(\autoref{proposition:surj_retract_equivalence}). Furthermore, our proof relies
heavily on the proof of the canonical model structure given in
\cite{RezkCat}. Notice that we have defined the cofibrations in the
Karoubian model structure to be the functors that are injective on objects;
this is the same as the definition of the cofibrations in the canonical model
structure. As a consequence, we have the following preliminary lemma:

\begin{lemma}\label{lemma:triv_fibs_in_Karoubian_and_canonical_coincide}
  The canonical model structure and the Karoubian model structure on $\Cat$
  have the same trivial fibrations.
\end{lemma}
\begin{proof}
  In the proof of Axiom M4 in \cite{RezkCat}, Rezk proves that the trivial
  fibrations in the canonical model structure on $\Cat$ are precisely the
  strictly surjective fully faithful functors, which are clearly trivial
  fibrations in the Karoubian model structure on $\Cat$. On the other hand,
  suppose $\pi:\cC\to\cD$ is a trivial fibration in the Karoubian model
  structure. We know $\pi$ is fully faithful. Since $\pi$ is surjective up to
  retracts and has the right lifting property against $\Sigma$, it can be seen
  that $\pi$ is in fact a strictly surjective functor.
\end{proof}

Lastly, we introduce the following notation:

\begin{notation}
  Given a functor $F:\cC\to\cD$ and objects $x$ and $y$ in $\cC$, we write
  $F_{x,y}$ to denote the induced function
  \[
    \Hom_\cC(x,y)\to\Hom_\cD(F(x),F(y)).
  \]
\end{notation}

Now we may prove \autoref{theorem:Karoubian_model_structrue}.

\subsection*{Axiom CM1} It is well known that $\Cat$ has all small limits and colimits.

\subsection*{Axiom CM2} By Axiom CM2 for the canonical model structure on
$\Cat$ (in which the weak equivalences are the equivalences of categories, see
\cite{RezkCat}), it is clear that if two of $F$, $G$, and $F\circ G$ are
pastoral equivalences then so is the third, by passing to the post-composition
functors and applying \autoref{proposition:surj_retract_equivalence}.

\subsection*{Axiom CM3} The cofibrations in the Karoubian model structure
coincide with those in the canonical model structure on $\Cat$, so it follows
$\Cof$ is closed under retracts. As with Axiom CM2, it follows from
the canonical model structure on $\Cat$ that $\cW$ is closed under retracts, by
passing a retract diagram to its post-composition functors. Since the
fibrations are characterized by a lifting property, they are also closed under
retracts (\cite[Lemma 11.1.4]{RiehlCatHmptyThry}).

\subsection*{Axiom CM4}
Suppose we have a lifting problem in $\Cat$ of the following form
\[
\begin{tikzcd}
	{\cA} & {\cC} \\
	{\cB} & {\cD}
	\arrow["{\iota}"', from=1-1, to=2-1]
	\arrow["{\pi}", from=1-2, to=2-2]
	\arrow["{F}", from=1-1, to=1-2]
	\arrow["{G}", from=2-1, to=2-2]
\end{tikzcd}  
\]
where $\iota$ is a cofibration and $\pi$ is an idfibration. We want to show that if either $\iota$ or $\pi$ is a pastoral equivalence, then the lifting problem has a solution. 

\textbf{Case 1.} First suppose $\iota$ is a pastoral equivalence, so for each object $b$ in $\cB$ we can choose an object $a_b$ in $\cA$ and a retraction
\[\begin{tikzcd}
	b\arrow[r,"i_b"]&\iota(a_b)\arrow[r,"r_b"]& b.
\end{tikzcd}\]
In the case $b$ is in the image of $\iota$ since $\iota$ is injective on objects, we can choose $a_b$ to be the unique object in $\cA$ such that $\iota(a_b)=b$, and we set $r_b=i_b=\id_b$. Since $\iota$ is fully faithful, there is a unique arrow $h_b:a_b\to a_b$ such that $\iota(h_b)=i_b\circ r_b$. Moreover, $h_b$ is an idempotent in $\cA$ by faithfulness of $\iota$. Note that when $b$ is in the image of $\iota$, by our earlier choices we have $h_b=\id_{a_b}$. Now consider the lifting problem
\[
\begin{tikzcd}
	{\Idem} & {\cC} \\
	{\Split} & {\cD,}
	\arrow["{\Sigma}"', from=1-1, to=2-1]
	\arrow["{\pi}", from=1-2, to=2-2]
	\arrow["{\nu_b}", from=1-1, to=1-2]
	\arrow["{\psi_b}", from=2-1, to=2-2]
\end{tikzcd}  
\]
defined by setting $\nu_b(e)=F(h_b)$, $\psi_b(p)=G(r_b)$, and $\psi_b(q)=G(i_b)$. The diagram commutes since
\[\pi(\nu_b(e))=\pi(F(h_b))=G(\iota(h_b))=G(i_b\circ r_b)=\psi_b(q\circ p)=\psi_b(\Sigma(e)).\] 
Since $\pi$ is an idfibration, we may choose a solution $L_b$ to this lifting problem. When $b$ is in the image of $\iota$, our earlier choices specified  $L_b$ to be the constant functor on $F(a_b)$. Now, we define the lift $L:\cB\to\cC$ to the original lifting problem. Given an object $b$ in $\cB$ we define $L(b):=L_b(1)$. Given an arrow $f:b\to d$, since $\iota$ is fully faithful, there exists a unique arrow $\wt f:a_b\to a_d$ such that
\[\iota(\wt f)=i_d\circ f\circ r_b.\]
Then define
\[L(f):=L_d(p)\circ F(\wt f)\circ L_b(q).\]
It is straightforward to check that since $L_b(0)=\nu_b(0)$ and $L_d(0)=\nu_d(0)$ that this composition is well-defined.
Furthermore, we claim $L$ is a functor. Given $f:b\to d$ and $g:d\to e$ in $\cB$, we have
\[L(g)\circ L(f)=L_e(p)\circ F(\wt g)\circ L_d(q)\circ L_d(p)\circ F(\wt f)\circ L_b(q).\]
Then using the fact that
\[L_d(q\circ p)=\nu_d(e)=F(h_d),\]
we get that
\[L(g)\circ L(f)=L_e(p)\circ F(\wt g\circ h_d\circ\wt f)\circ L_b(q).\]
Thus, by how we defined $L$, in order to show $L(g)\circ L(f)=L(g\circ f)$, it suffices to show that $\wt g\circ h_d\circ\wt f=\wt{g\circ f}$. By uniqueness, it further suffices to check that
\[\iota(\wt g\circ h_d\circ\wt f)=i_e\circ g\circ f\circ r_b.\]
Yet this is clear: Since $\iota(h_d)=i_d\circ r_d$ and $r_d\circ i_d=\id_d$, we have
\[\iota(\wt g\circ h_d\circ\wt f)=(i_e\circ g\circ r_d)\circ(i_d\circ r_d)\circ(i_d\circ f\circ r_b)=i_e\circ g\circ f\circ r_b,\]
as desired. Now, it remains to show the functor $L:\cB\to\cC$ is a solution to the original lifting problem, i.e., that $L\circ\iota=F$ and $\pi\circ L=G$. To see the former, note that by how we made our choices when constructing $L$, given an object $a$ in $\cA$, we have
\[L(\iota(a)):=L_{\iota(a)}(1)=F(a).\]
Similarly, given an arrow $f:a\to b$ in $\cA$, we know 
\[\iota(f)=\id_{\iota(b)}\circ\iota(f)\circ\id_{\iota(a)}=i_{\iota(b)}\circ\iota(f)\circ r_{\iota(a)},\] 
meaning $\wt{\iota(f)}=f$. Thus
\[L(\iota(f))=L_{\iota(b)}(p)\circ F(\wt{\iota(f)})\circ L_{\iota(a)}(q)=\id_{F(b)}\circ F(f)\circ\id_{F(a)}=F(f),\]
as desired. To see $\pi\circ L=G$, first note that given an object $b$ in $\cB$ we have
\[\pi(L(b))=\pi(L_b(1))=\psi_b(1)=\dom\psi_b(q)=\dom G(i_b)=G(b),\]
and given an arrow $f:b\to d$ in $\cB$, we have
\[\pi(L(f))=\pi(L_d(p)\circ F(\wt f)\circ L_b(q)).\]
Then using the fact that $\pi\circ L_b=\psi_b$, $\pi\circ F=G\circ\iota$, and $\iota(\wt f)=i_d\circ f\circ r_b$, and $r_b\circ i_b=\id_{b}$, we get
\[\pi(L(f))=G(r_d)\circ G(i_d\circ f\circ r_b)\circ G(i_b)=G(f).\]
Thus, we've constructed a functor $L:\cB\to\cC$ such that $L\circ\iota=F$ and $\pi\circ L=G$, as desired.

\textbf{Case 2:} Suppose $\iota\in\Cof$ and $\pi\in\cW\cap\IdFib$. In this case, by \autoref{lemma:triv_fibs_in_Karoubian_and_canonical_coincide}, $\pi$ is a trivial fibration in the canonical model structure on $\Cat$, and $\iota$ is a cofibration in the canonical model structure, so there is a lift.

\subsection*{Axiom CM5}
We now prove the two requisite factorization properties of morphisms in $\Cat$. Given a functor $F:\scr{C}\to \scr{D}$, by Rezk's proof of Axiom CM5 for the canonical model structure on $\Cat$ \cite{RezkCat}, every morphism factors as a cofibration followed by a trivial fibration in the canonical model structure on $\Cat$. The factorization into a cofibration and trivial fibration then follows by \autoref{lemma:triv_fibs_in_Karoubian_and_canonical_coincide}. 

On the other hand, since $\Cat$ is locally presentable (see \cite{Catlocpres} for a proof in greater generality), we can apply the small object argument \cite[Theorem 2.1.14]{Hovey_1999} to the set of morphisms $\{\Idem\to \Split\}$, yielding a factorization of $F:\scr{C}\to \scr{D}$ into 
\[
\begin{tikzcd}
	\scr{C}\arrow[r,"\iota"] &\scr{L}\arrow[r,"\pi"] & \scr{D}
\end{tikzcd}
\]
where $\pi$ is an idfibration, and $\iota$ is a transfinite composite of pushouts of coproducts copies of $\Idem\to \Split$. We thus need only see that the latter are pastoral equivalences and injective on objects. 

Functors that are pastoral equivalences and injective on objects are closed under coproducts. To see they are closed under transfinite composition, let
\[
\begin{tikzcd}
	\scr{C}_0\arrow[ddrrrr,"G_0"']\arrow[r,"F_1"] & \scr{C}_1 \arrow[r,"F_2"]\arrow[ddrrr] & \scr{C}_2 \arrow[r]\arrow[ddrr] & \cdots  & \\
	& & & &  \\
	& & & & \scr{E} 
\end{tikzcd}
\] 
be a colimit diagram in which the functors $F_i$ are pastoral equivalences and injective. By \autoref{proposition:surj_retract_equivalence} these are functors that are fully faithful, injective, and surjective up to retracts. We wish to show that $G_0$ is a fully faithful functor that is injective and surjective up to retracts. Since the functor that sends a category to its set of objects preserves colimits, it follows that $G_0$ is injective on objects. It then follows that for objects $a$ and $b$ of $\scr{E}$ with representatives $a_i,b_i\in \Ob(\scr{C}_i)$ for $i$ sufficiently large, $\scr{E}(a,b)$ is the colimit of $\scr{C}_i(a_i,b_i)$. Thus, we see that $G_0$ is fully faithful. Finally, let $a$ be an object of $\scr{E}$. Then choose $i\in \NN$ and $a_i\in \Ob(\scr{C}_i)$ such that $a_i$ is sent to $a$ in the colimit. Define $F_{1,i}$ as the composition of $F_1$ through $F_i$. Since finite composites of functors that are surjective up to retracts are themselves surjective up to retracts, there is a retract diagram 
\[
\begin{tikzcd}
	a_i \arrow[r,"q"] & F_{1,i}(c)\arrow[r,"p"] & a_i  
\end{tikzcd}
\]
in $\scr{C}_i$. The image of this diagram in $\scr{E}$ is a retract diagram displaying $a$ as a retract of $G_0(c)$. 

Finally, we show functors that are pastoral equivalences and injective on objects are closed under pushouts. Let 
\[
\begin{tikzcd}
	\scr{C}\arrow[d,"G"']\arrow[r,"F"] & \scr{D}\arrow[d,"H"]\\
	\scr{E}\arrow[r,"K"'] & \scr{P}
\end{tikzcd}
\]
be a pushout diagram in which $F$ is a pastoral equivalence and injective on objects. It follows from \cite[Proposition 5.2]{HomotopyNerve} that $K$ is fully faithful and injective on objects as well. Finally, if $a\in \Ob(\scr{P})$ it is either in the image of $H$ or $K$. Surjectivity up to retracts is clear in the latter case, since every object is a retract of itself. If $a$ is $H(d)$ for some $d\in \Ob(\scr{D})$, there is a retract diagram in $\scr{D}$ displaying $d$ as a retract of $F(c)$ for some $c\in \Ob(\scr{C})$. Applying $H$ to this diagram, we see that $a$ is a retract of $K(G(c))$, completing the proof.

\section{The Metric model structure on \texorpdfstring{$\bR_+\text-\Cat$}{R+-Cat}}\label{section:metric_model_structure}

We now turn our attention to the homotopy theory of $\bR_+$-categories. This section aims to show that on each of the categories $\RCat$ and $\RCat^{\on{sym}}$, there is a model structure \emph{uniquely determined} by the following properties:

\begin{itemize}
	\item The weak equivalences are the $\RR_+$-functors that are essentially surjective and fully faithful, i.e., the equivalences of $\RR_+$-categories. 
	\item Not every object is both fibrant and cofibrant.
\end{itemize}

In fact, these properties result in a model structure whose fibrant-cofibrant objects are the gaunt $\bR_+$-categories. Such a model structure is the natural first step to understanding metric spaces using homotopy-theoretic techniques since the homotopy theory it encodes is precisely the theory of extended metric spaces. This section is split into two subsections. In the first subsection, we will prove a model structure with these properties (if one exists) is unique. In the second subsection, we prove that such a model structure does exist. Before we begin, we define some notation that will be useful throughout the rest of the document.

\begin{notation}
	Given a class of morphisms $\cP$ in a category $\cC$, we write ${_\perp}\cP$ (resp.\ $\cP{_\perp}$) to denote the class of morphisms with the left (resp.\ right) lifting property against $\cP$.
\end{notation}

Note that if $\cP\sseq\cQ$ then ${_\perp}\cQ\sseq{_\perp}\cP$ and $\cQ{_\perp}\sseq\cP{_\perp}$. Furthermore, in the following proofs, we will use the fact that in a model structure with weak equivalences $\cW$, cofibrations $\Cof$, and fibrations $\Fib$, 
\[
  {_\perp}(\cW\cap\Fib)=\Cof,\quad
  \Cof{_\perp}=\cW\cap\Fib,\quad
  (\cW\cap\Cof){_\perp}=\Fib,
  \quad\text{and}\quad
  {_\perp}\Fib=\cW\cap\Cof
\]
(see \cite[Lemma 9.6]{GoerssJardine}). 

\begin{notation}\label{I_Delta_Gamma_defns}
  We fix the following common $\bR_+$-categories and $\bR_+$-functors, which we
  will use many times throughout the rest of the paper.
  \begin{itemize}
    \item Define $\bI$ to be the $\bR_+$-category with two elements $a_1$ and
    $a_2$ such that $\bI(a_1,a_2) = 0 = \bI(a_2, a_1)$ 
    \item Write $\ast$ for be the unique $\bR_+$-category with one object.
    \item Let $\Delta$ denote the unique $\mathbb{R}_+$-functor
    $\mathbb{I}\to\ast$.
    \item Let $\Gamma:\ast\to\bI$ denote the $\mathbb{R}_+$-functor sending
    $\ast\mapsto a_1$.
  \end{itemize}

  Note that the $\RR_+$-category $\mathbb{I}$ plays a role in our constructions completely analogous to the role played in \cite{RezkCat} by the walking isomorphism $I$. Both $\mathbb{I}$ and $I$ are examples of what are, in model category theory, called \emph{interval objects}, and are used in much the same way the topological interval $[0,1]$ is used in the homotopy theory of topological spaces.
\end{notation}

A straightforward examination yields the following characterization of those $\bR_+$-functors with the right lifting property against $\Delta$ and $\Gamma$:

\begin{remark}\label{remark:characterization_of_Delta_Gamma_lifting_properties}
    Let $f:\bC\to\bD$ be an $\bR_+$-functor, so given an object $c$ in $\bC$,
    $f$ maps the isomorphism class of objects $[c]$ into the isomorphism class
    $[f(c)]$. Then $f$ has the right lifting property against $\Delta$ (resp.\
    $\Gamma$) if and only if the induced assignment $[c]\to[f(c)]$ is injective
    (resp.\ surjective) for all objects $c$ in $\bC$. In particular,
    $f\in\{\Delta,\Gamma\}{_\perp}$ if and only if $f$ induces bijections of
    isomorphism classes $[c]\cong[f(c)]$.
\end{remark}

\begin{definition}\label{def:isofib}
    We define the \textit{isofibrations} in $\bR_+$-$\Cat$ to be the class $\{\Gamma\}{_\perp}$ of $\bR_+$-functors that have the right lifting property with respect to $\Gamma$.
\end{definition}

\begin{lemma}\label{lemma:delta_gamma_rlp_are_isomorph}
    If $\cW$ denotes the class of equivalences of $\bR_+$-categories and $\Iso$ denotes the class of isomorphisms of $\bR_+$-categories, then $\cW\cap\{\Delta,\Gamma\}{_\perp}=\Iso$.
\end{lemma}
\begin{proof}
    It is immediate that $\Iso\sseq\cW\cap\{\Delta,\Gamma\}_\perp$, as every isomorphism is a weak equivalence, and the isomorphisms clearly have the right lifting property with respect to every morphism. To show the other inclusion, let $f:\bC\to\bD$ be an $\bR_+$-functor belonging to $\cW\cap\{\Delta,\Gamma\}_\perp$. Since $f$ is fully faithful, it suffices to show $f$ induces a bijection between the objects of $\bC$ and the objects of $\bD$. Let $c_1$ and $c_2$ be objects in $\bC$ such that $f(c_1)=f(c_2)$. Since $f$ is fully faithful, $c_1$ is isomorphic to $c_2$, i.e., $\bC(c_1,c_2)=0$. Then since $f$ has the right lifting property with respect to $\Delta$, it follows that $c_1=c_2$, hence $f$ is injective. Let $d$ be an object $\DD$, then since $f$ is essentially surjective, there is an object $c$ in $\CC$ such that $f(c)\cong d$. Since $f$ has the right lifting property against $\Gamma$, we may lift this isomorphism to $\bC$, yielding an object $c_d\in \CC$ with $f(c_d)=d$. Thus $f$ is surjective, and therefore an isomorphism.
\end{proof}

\subsection{Uniqueness of the Metric model structure}

This subsection aims to prove the aforementioned claim of uniqueness:

\begin{theorem}\label{thm:metric_MS_unique}
	There is at most one model structure on $\bR_+\text-\Cat$ (resp.\ $\bR_+\text-\Cat^\sym$) satisfying the following conditions:\begin{enumerate}
		\item The weak equivalences are the $\bR_+$-functors that are essentially surjective and fully faithful, i.e., the equivalences of $\bR_+$-categories.
		\item Not every object is both fibrant and cofibrant.
	\end{enumerate}
\end{theorem}

We will prove \autoref{thm:metric_MS_unique} in the non-symmetric case, as
proving it in the symmetric case is entirely identical. Our proof roughly
follows the blogpost \cite{CatBlog}, which proves a similar uniqueness
result for the canonical model structure on $\Cat$. For the remainder of this
subsection, we will assume $\mathcal{M}$ is a model structure on $\RCat$
satisfying conditions (1) and (2) given above. We denote the classes of weak
equivalences, cofibrations, and fibrations in this model structure by $\cW$,
$\Cof$, and $\Fib$, respectively. Our work will culminate in
\autoref{lemma:triv_fib_equal_isos}, where we will show that the trivial
fibrations in $\cM$ must be precisely the strict isomorphisms of
$\bR_+$-categories. In other words, we will have uniquely determined the weak
equivalences and the trivial fibrations, from which uniqueness follows.

\begin{lemma}\label{lemma:0_to_b_cof}
	In the model structure $\mathcal{M}$,  $\emptyset\to\ast$ is a cofibration.
\end{lemma}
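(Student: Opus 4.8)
The plan is to deduce this from the factorization axiom (M4) together with the closure of cofibrations under retracts (M2). First I would apply (M4) to factor the unique $\bR_+$-functor $\emptyset\to\ast$ as
\[
\emptyset\xrightarrow{i} Y\xrightarrow{p}\ast ,
\]
with $i$ a cofibration and $p$ a trivial fibration. Since $p$ is a weak equivalence it is, by hypothesis on $\mathcal{M}$, an equivalence of $\bR_+$-categories, hence essentially surjective; as $\ast$ has an object, $Y$ must be nonempty.

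Next, choosing any object $y_0\in Y$ gives an $\bR_+$-functor $s\colon\ast\to Y$ with $s(\ast)=y_0$ (a function into a one-object target is automatically a short map), and since the identity is the only $\bR_+$-functor $\ast\to\ast$ we automatically have $p\circ s=\id_\ast$. Therefore $(\emptyset\to\ast)$ is a retract of $(\emptyset\xrightarrow{i}Y)$ in the arrow category, as witnessed by
\[
\begin{tikzcd}
\emptyset && \emptyset && \emptyset \\
\\
\ast && Y && \ast
\arrow["\id", from=1-1, to=1-3]
\arrow["\id", from=1-3, to=1-5]
\arrow["\id"', from=1-1, to=3-1]
\arrow["i", from=1-3, to=3-3]
\arrow["\id", from=1-5, to=3-5]
\arrow["s"', from=3-1, to=3-3]
\arrow["p"', from=3-3, to=3-5]
\end{tikzcd}
\]
whose rows compose to identities. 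Since $i$ is a cofibration and cofibrations are closed under retracts by (M2), it follows that $\emptyset\to\ast$ is a cofibration.

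I do not expect a real obstacle: the only subtlety is checking that the factorization object $Y$ is nonempty, which is forced because the weak equivalence $p\colon Y\to\ast$ is essentially surjective while $\ast$ has an object. Everything else is a formal application of the model-category axioms, and the argument is the $\bR_+$-enriched analogue of the standard first step in proving uniqueness of the canonical model structure on $\Cat$.
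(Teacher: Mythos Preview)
Your argument is correct and follows essentially the same route as the paper: produce a nonempty cofibrant object (the paper phrases this as cofibrant replacement, you do it by applying (M4) to $\emptyset\to\ast$ directly), then exhibit $\emptyset\to\ast$ as a retract of the resulting cofibration and invoke (M2). One cosmetic slip: the outer vertical arrows in your retract diagram are the unique map $\emptyset\to\ast$, not $\id$.
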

\begin{proof}
	By applying Axiom CM5 for a model category, we can factor $\emptyset\to\ast$ as $\emptyset\to\wh\ast\to\ast$, where $\wh\ast$ is cofibrant and the map $\wh\ast\to\ast$ is a trivial fibration. Note that $\wh\ast$ must be nonempty for $\wh\ast\to\ast$ to be essentially surjective, and in particular, a weak equivalence. Thus, there exists an $\bR_+$-functor $\ast\to\wh\ast$ with which we may construct a retract diagram
	% https://q.uiver.app/#q=WzAsNixbMCwwLCJcXGVtcHR5c2V0Il0sWzEsMCwiXFxlbXB0eXNldCJdLFsyLDAsIlxcZW1wdHlzZXQiXSxbMCwxLCJcXGFzdCJdLFsxLDEsIlxcd2lkZWhhdFxcYXN0Il0sWzIsMSwiXFxhc3QiXSxbMCwxXSxbMSwyXSxbMCwzXSxbMyw0XSxbNCw1XSxbMSw0XSxbMiw1XV0=
	\[\begin{tikzcd}
		\emptyset & \emptyset & \emptyset \\
		\ast & \widehat\ast & \ast
		\arrow[from=1-1, to=1-2]
		\arrow[from=1-2, to=1-3]
		\arrow[from=1-1, to=2-1]
		\arrow[from=2-1, to=2-2]
		\arrow[from=2-2, to=2-3]
		\arrow[from=1-2, to=2-2]
		\arrow[from=1-3, to=2-3]
	\end{tikzcd}\]
	The desired result follows by axiom CM3 for a model category.
\end{proof}

In what follows, write $\Inj$ and $\Surj$ to denote the classes of $\bR_+$-functors that are injective (resp.\ surjective) on objects.

\begin{lemma}\label{cor:inj_is_cof}
	$\Cof\spseq\Inj$.
\end{lemma} 
\begin{proof}
  The trivial fibrations must have the right lifting property with
  respect to all cofibrations, including $\emptyset\to\ast$ by
  \autoref{lemma:0_to_b_cof}. It is straightforward to check that the
  $\bR_+$-functors with the right lifting property against this map are
  precisely the $\bR_+$-functors that are surjective on objects, so it follows
  that the trivial fibrations in $\cM$ form a subclass of the class of
  $\bR_+$-functors that are surjective on objects, i.e.,
  $\cW\cap\Fib\sseq\Surj$. In particular,
  $\cW\cap\Fib\sseq\cW\cap\mathcal{S}\mathrm{urj}$, so that
  \[
    {_\perp}(\cW\cap\mathcal{S}\mathrm{urj})\sseq{_\perp}(\cW\cap\Fib)=\Cof.
  \]
  Hence, it suffices to show that the $\bR_+$-functors in $\Inj$ have the left
  lifting property against those in $\cW\cap\Surj$. Let 
	\[
	\begin{tikzcd}
		\mathbb{A} \arrow[r,"u"]\arrow[d,"i"'] & \mathbb{C}\arrow[d,"f"]\\
		\mathbb{B} \arrow[r,"v"'] & \mathbb{D}
	\end{tikzcd}
	\]
	be a commutative diagram in $\bR_+\text-\Cat$ such that $f$ is an equivalence that is surjective on objects, and $i$ is injective on objects. Since injective maps of sets have the left lifting property against surjective maps of sets, we can lift the underlying diagram of object sets to a morphism $\ell: \Ob(\bb{B})\to \Ob(\bb{C})$ making the diagram commute. We thus need only check that $\ell$ is an $\RR_+$-functor. However, given $a,b\in \Ob(\bb{B})$, we have that 
	\[
	\bb{B}(a,b) \geq \bb{D}(v(a),v(b))= \bb{D}(f(\ell(a)),f(\ell(b)))=\bb{C}(\ell(a),\ell(b))
	\]
	where the last equality follows because $\ell$ is a isometry, and the first inequality follows because  $v$ is an $\bR_+$-functor. Thus, $\ell$ is an $\bR_+$-functor, as desired. 
\end{proof}

In fact, we claim that $\Cof$ is strictly larger than $\Inj$:

\begin{lemma}\label{lemma:inj_cof_objects}
	$\Cof\neq\Inj$.
\end{lemma}
\begin{proof}
  Suppose for the sake of a contradiction that $\Cof=\Inj$. Then, by an
  argument very similar to that given in the proof of Axiom M4 in
  \cite{RezkCat}, the fibrations in $\cM$ are precisely the isofibrations
  (\autoref{def:isofib}). Now, let $\mathbb{C}$ be an arbitrary
  $\mathbb{R}_+$-category. First note that $\mathbb{C}\to\ast$ is an
  isofibration, so we would have that $\mathbb{C}$ is fibrant. Moreover, since
  $\emptyset\to\bC$ is vacuously injective on objects, so every object is
  cofibrant. This is a contradiction of condition (2).
\end{proof}

Thus, the class of cofibrations contains more than just those $\bR_+$-functors that are injective on objects.

\begin{lemma}\label{Delta_Gamma_are_trivial_cofibrations_uniqueness_proof_lemma}
	The $\bR_+$-functors $\Delta$ and $\Gamma$ are trivial cofibrations in $\cM$.
\end{lemma}
\begin{proof}
  It is clear to see $\Gamma$ is a weak equivalence and $\Delta$ is a weak
  equivalence. Moreover, $\Gamma$ is a cofibration as it is injective. To show
  $\Delta$ is a cofibration, we follow a proof roughly based on an argument
  given in \cite{CatBlog}. By \autoref{cor:inj_is_cof} and
  \autoref{lemma:inj_cof_objects}, there exists some cofibration $f:\bA\to\bB$
  and distinct objects $x$ and $y$ in $\bA$ such that $f(x)=f(y)$. Now let $g$
  denote the $\bR_+$-functor $\bA\to\bI$ that sends $x$ to $a_1$ and every
  other object to $a_2$. Then we may form the following pushout diagram in
  $\bR_+\text-\Cat$
	\[\begin{tikzcd}
		\bA & \bI \\
		\bB & {\bB\amalg_\bA\bI}
		\arrow["g", from=1-1, to=1-2]
		\arrow["{\wt f}", from=1-2, to=2-2]
		\arrow["f"', from=1-1, to=2-1]
		\arrow["{\wt g}", from=2-1, to=2-2]
		\arrow["\lrcorner"{anchor=center, pos=0.125, rotate=180}, draw=none, from=2-2, to=1-1]
	\end{tikzcd}\]
	Since cofibrations in a model category are closed under taking pushouts (\cite[Corollary 1.1.11]{Hovey_1999}), $\wt f$ is a cofibration. Furthermore, we have
	\[\wt f(a_1)=\wt f(g(x))=\wt g(f(x))=\wt g(f(y))=\wt f(g(y))=\wt f(a_2),\]
	so $\wt f$ factors through $\ast$ via $\Delta$, that yields the following retract diagram:
	\[\begin{tikzcd}
		\bI & \bI & \bI \\
		\ast & {\bB\amalg_\bA\bI} & \ast
		\arrow[Rightarrow, no head, from=1-1, to=1-2]
		\arrow[Rightarrow, no head, from=1-2, to=1-3]
		\arrow["\Delta", from=1-3, to=2-3]
		\arrow["\Delta"', from=1-1, to=2-1]
		\arrow["{\wt f(a_1)}"', from=2-1, to=2-2]
		\arrow[from=2-2, to=2-3]
		\arrow["{\wt f}", from=1-2, to=2-2]
	\end{tikzcd}\]
	Since the class of cofibrations in $\cM$ is closed under retracts, it follows that $\Delta$ is a cofibration, as desired.
\end{proof}

As a corollary of this result, we have the following.

\begin{corollary}\label{prop:fib_in_Delta_Gamma}
	$\Fib\subseteq \{\Delta,\Gamma\}_\perp$
\end{corollary}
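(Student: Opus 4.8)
The plan is to derive this as an immediate formal consequence of the lifting axiom (M3), so that the real content sits in the lemmas already established. First I would make explicit that we are working under the standing hypothesis of \autoref{lemma:delta_gamma_in_cofib}, namely that $\Cof$ contains at least one $\bR_+$-functor which is not injective on objects. This is exactly the case of interest: by \autoref{lemma:inj_cof_objects} the alternative forces every $\bR_+$-category to be fibrant-cofibrant, which is incompatible with our intended fibrant-cofibrant objects (the gaunt $\bR_+$-categories). Under that hypothesis, \autoref{lemma:delta_gamma_in_cofib} gives that $\Delta$ and $\Gamma$ are cofibrations, and \autoref{remark:delta_gamma_in_W} gives that they are weak equivalences; hence both $\Delta$ and $\Gamma$ lie in $\Cof\cap\cW$, i.e.\ they are trivial cofibrations.

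Next I would simply invoke axiom (M3): in any model structure a fibration has the right lifting property against every trivial cofibration. Applying this to the trivial cofibrations $\Delta$ and $\Gamma$, we get that every $\pi\in\Fib$ has the right lifting property with respect to both $\Delta$ and $\Gamma$. Unwinding the definition of $\{\Delta,\Gamma\}_\perp$ as the class of morphisms possessing the right lifting property against both of these maps, this is precisely the assertion $\Fib\subseteq\{\Delta,\Gamma\}_\perp$.

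I do not anticipate any genuine obstacle here: the statement is a one-line deduction once \autoref{remark:delta_gamma_in_W} is available. The only point that deserves a modicum of care is bookkeeping --- confirming that the standing hypothesis of \autoref{lemma:delta_gamma_in_cofib} is in force, so that $\Delta$ is indeed a cofibration. This matters because $\Delta$ is \emph{not} injective on objects, hence (unlike $\Gamma$) is not automatically a cofibration via \autoref{cor:inj_is_cof}; it is a cofibration only by virtue of \autoref{lemma:delta_gamma_in_cofib}. With that observation recorded, the argument is complete.
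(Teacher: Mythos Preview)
Your proposal is correct and follows essentially the same approach as the paper: the paper's proof is the single line ``By \autoref{remark:delta_gamma_in_W}, $\Delta$ and $\Gamma$ are both trivial cofibrations,'' leaving the invocation of (M3) implicit. Your version simply spells out the bookkeeping (the standing hypothesis needed for \autoref{lemma:delta_gamma_in_cofib}, and the appeal to (M3)) more carefully.
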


Finally, we may explicitly describe the trivial fibrations in $\cM$.

\begin{lemma}\label{lemma:triv_fib_equal_isos} $\cW\cap\Fib=\Iso$.
\end{lemma}
\begin{proof}
 Note that the isomorphisms lift against every morphism,
  so we have $\Iso\sseq{(\Cof)}_\perp=\cW\cap\Fib$. By
  \autoref{prop:fib_in_Delta_Gamma} and
  \autoref{lemma:delta_gamma_rlp_are_isomorph}, we know
  $\cW\cap\Fib\sseq\cW\cap\{\Delta,\Gamma\}{_\perp}=\Iso$. 
\end{proof}

\begin{remark}
	In effect, we have shown that there is only one candidate for our desired model structure. Indeed, we have determined the weak equivalences and the trivial fibrations, so it follows that $\Cof={_\perp}(\cW\cap\Fib)$ and $\Fib=(\cW\cap\Cof)_{\perp}$ are uniquely determined as well, proving \autoref{thm:metric_MS_unique}.
\end{remark}

\subsection{Existence of the Metric model structure}
Our work so far suggests the following theorem:

\begin{theorem}\label{theorem:IoI_model_structure} There is a model structure on $\RCat$ (resp.\ $\bR_+\text-\Cat^\sym$) such that
	\begin{itemize}
		\item[($\cW$)] The weak equivalences are the fully faithful and essentially surjective $\bR_+$-functors.
		\item[($\Cof$)] Every $\bR_+$-functor is a cofibration.
		\item[($\Fib$)] The fibrations are those $\bR_+$ functors with the right lifting property against $\Delta$ and $\Gamma$.
	\end{itemize}
  Furthermore, the fibrant-cofibrant objects in this model structure are
  precisely the gaunt $\bR_+$-categories (resp.\ the symmetric gaunt
  $\bR_+$-categories). We call this model structure the \emph{Metric model
  structure}.
\end{theorem}

The characterization of the fibrant-cofibrant objects in this model structure
follows by
\autoref{remark:characterization_of_Delta_Gamma_lifting_properties}. This
subsection will be devoted to proving the existence of the Metric model
structure. To start, we prove an alternative characterization of the weak
equivalences that will be useful for the proof. To do so we first define a new
functor. Note that this is a functor between categories in the usual sense and
not an $\bR_+$-functor.

\begin{lemma}
	There exists a unique functor $M:\RCat\to\RCat^{\mathrm{gaunt}}$ satisfying the following properties:
	\begin{itemize}
		\item An $\bR_+$ category $\bC$ is sent to the gaunt category $M(\bC)$ whose objects are the isomorphism classes $[x]$ of objects $x$ in $\bC$, and the hom-objects are given by $M(\bC)([x],[y]) = \bC(x,y)$.
		\item An $\bR_+$ functor $f$ is sent to the function $[f]$ that sends $[x]$ to $[f(x)]$.
	\end{itemize}
\end{lemma}
\begin{proof}
	To see $M(\bC)$ is a well-defined $\bR_+$-category, we first need to show that the definition $M(\bC)([x],[y])=\bC(x,y)$ does not depend on the choice of representatives $x$ and $y$. Suppose $x'$ and $y'$ are objects in $\bC$ such that $\bC(x,x')=0=\bC(x',x)$ and $\bC(y,y')=0=\bC(y',y)$. Then by the triangle inequality (composition), we have
	\[\bC(x,y)\leq\bC(x,x')+\bC(x',y')+\bC(y',y)=\bC(x',y').\]
	By symmetry it follows $\bC(x,y)=\bC(x',y')$, so the hom-objects in $M(\bC)$ are well-defined, as desired. It is then straightforward to see that $M(\bC)$ is a gaunt $\bR_+$-category.

	To see that the action of $M$ on arrows is well-defined, note that if $f:\bC\to\bD$ is an $\bR_+$-functor and $x$ and $y$ are objects in $\bC$ with $\bC(x,y)=0=\bC(y,x)$, then by definition of an $\bR_+$-functor, we must have $\bC(f(x),f(y))=0$, so it follows $[f(x)]=[f(y)]$, hence $[f]$ is a well-defined map. Checking that $[f]$ is also $\bR_+$-functor and that $M$ is functorial is entirely straightforward.
\end{proof}

Using $M$, we may now provide an alternative characterization of the weak equivalences in the Metric model structure.

\begin{lemma}\label{lemma:M_char_of_metric_WE's}
  An $\bR_+$-functor $f:\bC\to\bD$ is fully faithful and essentially surjective
  (i.e., a weak equivalence in the Metric model structure) if and only if
  $M(f)$ is an isomorphism of $\bR_+$-categories.
\end{lemma}
\begin{proof}
	This follows immediately from unwinding the definitions.
\end{proof}

We now proceed to the proof of \autoref{theorem:IoI_model_structure}. 

\subsection*{Axiom CM1}
The fact that $\bR_+\text-\Cat$ (and $\bR_+\text-\Cat^\sym$) are (co)complete is \autoref{R+Cat_cocomplete}.

\subsection*{Axiom CM2}
That $\cW$ satisfies the 2-out-of-3 property follows from \autoref{lemma:M_char_of_metric_WE's} and the 2-out-of-3 property for isomorphisms.

\subsection*{Axiom CM3}
Now we want to show that $\Cof$, $\Fib$, and $\cW$ are closed under retracts. The class $\Cof$ is trivially closed under retracts as every $\bR_+$-functor is a cofibration. Since $\Fib=\{\Delta,\Gamma\}_\perp$ is characterized by a lifting property, it is closed under retracts (\cite[Lemma 11.1.4]{RiehlCatHmptyThry}). It remains to show that $\cW$ is closed under retracts. By applying $M$ to a retract diagram, this follows by functoriality of $M$ and \autoref{lemma:M_char_of_metric_WE's}, since isomorphisms in every category are closed under retracts.

\subsection*{Axiom CM4}\label{metric_MS_CM4}
Suppose we have a lifting problem in $\bR_+\text-\Cat$ of the following form
% https://q.uiver.app/#q=WzAsNCxbMCwwLCJcXGJBIl0sWzEsMCwiXFxiQyJdLFswLDEsIlxcYkIiXSxbMSwxLCJcXGJEIl0sWzAsMSwiZiJdLFswLDIsIlxcaW90YSIsMl0sWzIsMywiZyJdLFsxLDMsIlxccGkiXV0=
\[\begin{tikzcd}
	\bA & \bC \\
	\bB & \bD
	\arrow["f", from=1-1, to=1-2]
	\arrow["\iota"', from=1-1, to=2-1]
	\arrow["g", from=2-1, to=2-2]
	\arrow["\pi", from=1-2, to=2-2]
\end{tikzcd}\]
where $\iota$ is a cofibration and $\pi$ is a fibration. We claim that if either $\iota$ or $\pi$ is a weak equivalence then the diagram has a lift.

\textbf{Case 1:} First suppose $\iota\in\cW\cap\Cof$ and $\pi\in\Fib$. We define a lift $\ell:\bB\to\bC$. First, we fix some data: For each object $b$ in $\bB$, by essential surjectivity of $\iota$ there exists an object $a_b$ in $\bA$ such that $\iota(a_b)\cong b$. Then by the definition of an $\bR_+$-functor and commutativity of the lifting problem, we have that $\pi(f(a_b))=g(\iota(a_b))\cong g(b)$. Since $\pi$ has the right lifting property against $\Gamma$, by \autoref{remark:characterization_of_Delta_Gamma_lifting_properties}, there exists an object $c_b$ in $\bC$ such that $c_b\cong f(a_b)$ and $\pi(c_b)=g(b)$. In the case $b$ is in the image of $\iota$, we specifically choose $a_b$ such that $\iota(a_b)=b$ and set $c_b:=f(a_b)$.

Now we define $\ell(b):=c_b$. First of all, $\ell$ is an $\bR_+$-functor, as given objects $b$ and $b'$ in $\bB$, we have
\begin{align*}
  \bC(\ell(b),\ell(b'))
  =\bC(c_b,c_{b'})
  &\overset{(1)}=\bC(f(a_b),f(a_{b'})) \\
  &\overset{(2)}\leq\bA(a_b,a_{b'})
  \overset{(3)}=\bB(\iota(a_b),\iota(a_{b'}))
  \overset{(1)}=\bB(b,b'),
\end{align*}
where each occurrence of $(1)$ follows by an application of the triangle inequality and since $c_b\cong f(a_b)$ and $\iota(a_b)\cong b$, $(2)$ follows since $f$ is an $\bR_+$-functor, and $(3)$ follows since $\iota$ is a fully faithful $\bR_+$-functor. By \autoref{rmk:Rfunctorsobjects} it suffices to check commutativity on objects. It follows directly from the construction of $\ell$ that $\pi\circ\ell=g$. To show that $\ell\circ\iota=f$, let $a$ be an object in $\bA$, and set $b:=\iota(a)$. Then by unravelling definitions, we have
\[\ell(\iota(a))=\ell(b)=f(a_b).\]
Thus, it suffices to show $f(a)=f(a_b)$. To see this, note since $\iota(a)=b=\iota(a_b)$ and $\iota$ is fully faithful, $a$ and $a_b$ must be isomorphic. Hence since $f$ is an $\bR_+$-functor, we know $f(a)\cong f(a_b)$ in $\bC$. Furthermore, $\pi:\bC\to\bD$ is a fibration that maps $f(a)$ and $f(a_b)$ to the same point:
\[\pi(f(a_b))=g(\iota(a_b))=g(b)=g(\iota(a))=\pi(f(a)),\]
so by \autoref{remark:characterization_of_Delta_Gamma_lifting_properties}, we must have $f(a_b)=f(a)$, as desired.

\textbf{Case 2.} Next suppose $\iota\in\Cof$ and $\pi\in\cW\cap\Fib$. By \autoref{lemma:delta_gamma_rlp_are_isomorph}, $\pi$ is an isomorphism, meaning it has the right lifting property against every morphism.

\subsection*{Axiom CM5}
Let $f:\bC\to\bD$ be an $\bR_+$-functor. First, we prove $f$ factors as a trivial cofibration followed by a fibration. Define an $\bR_+$-category $\bL$ as follows: The objects of $\bL$ are pairs $([c],d)$, where $[c]$ is an isomorphism class of objects in $\bC$, and $d$ is an object in $\bD$ such that $f(c)\cong d$. This is well-defined, as if we have isomorphic objects $c$ and $c'$ in $\bC$ and some object $d$ in $\bD$ such that $f(c)\cong d$, then by the triangle inequality and the definition of an $\bR_+$-functor we have
\[\bD(f(c'),d)\leq\bD(f(c),d)+\bD(f(c'),f(c))=0+0=0,\]
and showing $\bD(d,f(c'))=0$ is analogous. Define the hom-objects in $\bL$ by 
\[\bL(([c],d),([c'],d)):=\bC(c,c').\] 
Showing this definition is well-defined and independent of choice of representatives follows similarly by the triangle inequality. Now, define $\iota:\bC\to\bL$ and $\pi:\bL\to\bD$ by $\iota(c)=([c],f(c))$ and $\pi([c],d)=d$.

First, we claim $\iota$ is a trivial cofibration, i.e., that $\iota$ is fully faithful and essentially surjective. It is fully faithful by construction, to see it is essentially surjective, let $([c],d)$ be an object in $\bD$. Then note
\[\bL(([c],d),\iota(c))=\bL(([c],d),([c],f(c)))=\bC(c,c)=0.\]
Showing $\bL(\iota(c),([c],d))=0$ is entirely analogous, so indeed $\iota(c)\cong([c],d)$. Thus, $\iota$ is an equivalence of $\bR_+$-categories as desired.

To check that $\pi$ is a fibration, we apply the characterization given by \autoref{remark:characterization_of_Delta_Gamma_lifting_properties}. First, suppose we have isomorphic objects $([c],d)$ and $([c'],d')$ in $\bL$ that are mapped to the same object by $\pi$, so $d=d'$. By the definition of hom-objects in $\cL$, since $([c],d)$ and $([c'],d')=([c'],d)$ are isomorphic we know $c$ and $c'$ are isomorphic in $\bC$, so $[c]=[c']$. Thus, $([c],d)=([c'],d')$. Next, let $([c],d)$ be an object in $\bL$, and let $d'$ be an object in $\bD$ such that $\pi([c],d)=d$ is isomorphic to $d'$. Then by the triangle inequality, we have
\[\bD(f(c),d')\leq\bD(f(c),d)+\bD(f(c),f(c))=0+0=0,\]
and showing $\bD(d',f(c))=0$ is analogous. Hence $f(c)\cong d'$, so $([c],d')$ is a valid element of $\bL$. Thus $d'=\pi([c],d')$ is in the image of $\pi$, as desired. Finally, by \ref{rmk:Rfunctorsobjects} it suffices to show $\pi\circ\iota=f$ on objects. This is clear since $\pi(\iota(c))=\pi([c],f(c))=f(c)$. Thus we've shown $f$ factors as a trivial cofibration followed by a fibration, as desired.

On the other hand, every $\bR_+$-functor $f$ factors as $\id\circ f$: Every $\bR_+$-functor is a cofibration, and identity morphisms are trivial fibrations by \autoref{lemma:delta_gamma_rlp_are_isomorph}.

\section{The Cauchy model structure on \texorpdfstring{$\RCat^\sym$}{R+-Cat{\^{}}sym}}\label{section:cauchy_model_structure}

In \Cref{section:Karoubian}, we defined the \emph{Karoubian model structure} on
$\Cat$. This model structure characterizes Cauchy complete categories, as
described in \Cref{section:preliminaries}. In this section, we construct an
$\bR_+$-enriched version of the Karoubian model structure on
$\bR_+\text-\Cat^\sym$: The cofibrations in both model structures are those ($\bR_+$-)functors that are injective on objects. The weak equivalences
in both model structures are the functors that induce equivalences between the
Cauchy completions. The fibrations in the Karoubian model structure are
characterized as those functors that ``lift retracts'' in a certain sense,
while the fibrations in the Cauchy model structure are characterized as those
$\bR_+$-functors that lift limits of Cauchy sequences. Finally, the
fibrant-cofibrant objects in both model structures are the Cauchy complete
($\bR_+$-) categories. Before we define the model structure, we first define an
important class of $\bR_+$-functors.

\begin{definition}\label{def:dense_R+_functor}
	Let $f:\bC\to\bD$ be an ${\bR_+}$-functor. Then we say that $f$ is \textit{dense} if for every object $d$ in $\bD$, there is some Cauchy sequence in the image of $f$ that converges to $d$.
\end{definition}

In comparison to the Karoubian model structure, one should think of dense $\bR_+$-functors as the $\bR_+$-enriched analog of the functors of categories that are surjective up to retracts (\autoref{def:surjective_up_to_retracts}). Now, we define an $\bR_+$-category that will be important for the construction of the Cauchy model structure.

\begin{definition}\label{def:Seq}
	Define a symmetric $\RR_+$-category $\Seq$ whose objects are the natural numbers $\bN=\{1,2,\ldots\}$, and for $n,m\in\NN$,
	\[
	\Seq(n, m) = \sum_{i=\min(n,m)}^{\max(n, m) - 1} \frac{1}{2^i}.
	\]
\end{definition}

One can easily see that $\Seq$ is a metric space. Intuitively, one should think of $\Seq$ as the natural numbers arranged in a line segment of length one. The numbers become increasingly bunched together as you go farther right along the segment. The distance from $n$ to $n+1$ is $1/2^n$, and the sequence $(n)_{n\in\bN}$ is in fact a Cauchy sequence in $\Seq$. In \autoref{lemma:convergent-Cauchy-const-or-inf}, we show that every Cauchy sequence in $\Seq$ is either eventually constant or is equivalent to the Cauchy sequence $(n)_{n\in\bN}$. Since $\Seq$ is a gaunt $\bR_+$-category, it follows that the objects of the Cauchy completion $\ol\Seq$ are
\[\ob\ol\Seq=\{\ell_{\wh 1},\ell_{\wh 2},\ldots\}\cup\{\ell_{(n)}\}\qquad\text{(\autoref{cor:description_of_seq_bar})}.\]
In other words, $\Seq$ is the $\bR_+$-category with a single nontrivial Cauchy
sequence (up to equivalence), and $\ol\Seq$ is the $\bR_+$-category obtained by
freely adding a limit to this Cauchy sequence. The inclusion
$\iota_\Seq:\Seq\into\ol\Seq$ takes an object $n$ in $\Seq$ to the presheaf
$\ell_{\wh{n}}=\bC(-,n)$ associated to the constant Cauchy sequence $\wh
n$.\footnote{We defined the notation $\ell_{(x_n)}$ for the presheaf
associated to a Cauchy sequence $(x_n)$ in
\autoref{def:presheaf_associated_to_cauchy_sequence}. Furthermore, recall we
characterized the Cauchy completion (\autoref{def:cauchy_completion}) $\ol\bC$
of a symmetric $\bR_+$-category $\bC$ as the full subcategory of
$\bR\text-\Fun(\bC^\op,\bR_+)$ on the presheaves of the form $\ell_{(x_n)}$
for a Cauchy sequence $(x_n)$ in $\bC$
(\autoref{prop:equivalent_Cauchy_characterization_fake}).} The $\bR_+$-functor
$\iota_\Seq$ will serve a similar role in the construction of the Cauchy model
structure to the role played by the inclusion $\Sigma:\Idem\into\Split$ in the
construction of the Karoubian model structure in
\Cref{section:Karoubian}. Explicitly, in
\autoref{prop:Seq_pick_out_subsequence}, we prove that given an
$\bR_+$-category $\bC$ containing a Cauchy sequence $(x_n)$, there exists an
$\bR_+$-functor $\nu:\Seq\to\bC$ such that the Cauchy sequence
$(\nu(n))_{n\in\bN}$ is a subsequence of $(x_n)$. Since sequences are
equivalent to their subsequences
(\autoref{lemma:equivalents-and-cauchy-coincide}), it follows that every Cauchy
sequence in an $\bR_+$-category may be represented (up to equivalence) by an
$\bR_+$-functor out of $\Seq$ (this is explicitly proven in
\autoref{prop:Seq_pick_out_subsequence}). 

Now we define the Cauchy model structure.

\begin{theorem}\label{theorem:cauchy_model_structure}
	There is a model structure on the category $\RCat^\sym$ of small ${\bR_+}$-enriched symmetric categories where \begin{itemize}
    \item[($\cW$)] The weak equivalences are the $\bR_+$-functors that are
    fully faithful and dense (\autoref{def:dense_R+_functor}), i.e., those
    $\bR_+$-functors $f:\bC\to\bD$ where $\ol f:\ol\bC\to\ol\bD$ is an
    equivalence (\autoref{prop:R_pastoral_equiv}).
		\item[($\Cof$)] The cofibrations are the ${\bR_+}$-functors that are injective on objects.
		\item[($\Fib$)] The fibrations are those ${\bR_+}$-functors with the right lifting property against $\iota_\Seq:\Seq\into\overline\Seq$.
	\end{itemize}
  Moreover, the fibrant-cofibrant objects in this model structure are
  precisely the Cauchy complete symmetric $\bR_+$-categories, i.e., those
  symmetric $\bR_+$-categories in which every Cauchy sequence has a limit. We
  call this the \emph{Cauchy model structure}.
\end{theorem}

Before giving the proof, we prove the following lemma which provides a useful characterization of the fibrations.

\begin{lemma}\label{lemma:characterization_of_fibrations_in_Cauchy_model_structure}
	Let $f:\bC\to\bD$ be an $\bR_+$-functor between symmetric $\bR_+$-categories. Then $f$ has the right lifting property against $\iota_\Seq$ (i.e., $f$ is a fibration in the Cauchy model structure) if and only if the following holds: Given a Cauchy sequence $(x_n)_{n\in\bN}$ in $\bC$ and a limit $y$ in $\bD$ of the Cauchy sequence $(f(x_n))_{n\in\bN}$, there exists an object $x$ in $\bC$ such that $x$ is a limit of $(x_n)$ and $f(x)=y$.
\end{lemma}
\begin{proof}
	Suppose $f$ is a vibration. By \autoref{prop:Seq_pick_out_subsequence}, there exists an $\bR_+$-functor $\nu:\Seq\to\bC$ such that the Cauchy sequence $(\nu(n))_{n\in\bN}$ is a subsequence of $(x_n)$. Then by \autoref{lemma:convergent_Cauchy_factors_through_seq_bar}, there exists an $\bR_+$-functor $\psi:\ol\Seq\to\bD$ sending $\ell_{(n)}\mapsto y$ that makes the following diagram commute:
	% https://q.uiver.app/#q=WzAsNCxbMCwwLCJcXFNlcSJdLFswLDEsIlxcb2xcXFNlcSJdLFsxLDAsIlxcYkMiXSxbMSwxLCJcXGJEIl0sWzAsMSwiXFxpb3RhX1xcU2VxIiwyLHsic3R5bGUiOnsidGFpbCI6eyJuYW1lIjoiaG9vayIsInNpZGUiOiJ0b3AifX19XSxbMCwyLCJcXG51Il0sWzIsMywiZiJdLFsxLDMsIlxccHNpIiwwLHsic3R5bGUiOnsiYm9keSI6eyJuYW1lIjoiZGFzaGVkIn19fV1d
	\[\begin{tikzcd}
		\Seq & \bC \\
		\ol\Seq & \bD
		\arrow["{\iota_\Seq}"', hook, from=1-1, to=2-1]
		\arrow["\nu", from=1-1, to=1-2]
		\arrow["f", from=1-2, to=2-2]
		\arrow["\psi", dashed, from=2-1, to=2-2]
	\end{tikzcd}\]
	Since $f$ is a fibration, the diagram has a lift $L:\ol\Seq\to\bC$. Set
	$x:=L(\ell_{(n)})$. By
	\autoref{lemma:convergent_Cauchy_factors_through_seq_bar} again, $x$ is a limit
	of the sequence $(\nu(n))_{n\in\bN}$. Then since $(\nu(n))$ is a subsequence of
	the Cauchy sequence $(x_n)$, $x$ must be a limit of $(x_n)$ as well
	(\autoref{lemma:equivalents-and-cauchy-coincide}). Finally, we have that
	$f(x)=f(L(\ell_{(n)}))=\psi(\ell_{(n)})=y$, as desired.

	On the other hand, suppose that $f$ satisfies the given condition and we have a lifting problem of the following form
	\[\begin{tikzcd}
		\Seq & \bC \\
		\ol\Seq & \bD
		\arrow["{\iota_\Seq}"', hook, from=1-1, to=2-1]
		\arrow["\psi", from=2-1, to=2-2]
		\arrow["\nu", from=1-1, to=1-2]
		\arrow["f", from=1-2, to=2-2]
	\end{tikzcd}\]
	Then $(\nu(n))$ is a Cauchy sequence, and its image under $f$ is $(f(\nu(n)))=(\psi(\iota_\Seq(n)))$. By \autoref{cor:lim_iota_cauchy_seq}, the object $\ell_{(n)}$ in $\ol\Seq$ is a limit of the sequence $(\iota_\Seq(n))$, so it follows that $\psi(\ell_{(n)})$ is a limit of $(f(\nu(n)))=(\psi(\iota_\Seq(n)))$. Then since $f$ satisfies the given condition, there exists an object $x$ in $\bC$ such that $x$ is a limit of $(\nu(n))$ and $f(x)=\psi(\ell_{(n)})$. Then by \autoref{lemma:convergent_Cauchy_factors_through_seq_bar}, there exists an $\bR_+$-functor $L:\ol\Seq\to\bC$ sending $\ell_{(n)}\mapsto x$ satisfying $L\circ\iota_\Seq=\nu$. To see $f\circ L=\psi$, first note by construction $f(L(\ell_{(n)}))=f(x)=\psi(\ell_{(n)})$. Moreover, given $n\in\bN$, we have 
	\[f(L(\ell_{\wh n}))=f(L(\iota_\Seq(n)))=f(\nu(n))=\psi(\iota_\Seq(n))=\psi(\ell_{\wh n}),\]
	as desired. Since $\ob\ol\Seq=\{\ell_{\wh 1},\ell_{\wh 2},\ldots\}\cup\{\ell_{(n)}\}$ (\autoref{cor:description_of_seq_bar}), it follows from \autoref{rmk:Rfunctorsobjects} that $f\circ L=\psi$, as desired.
\end{proof}

By this characterization of the fibrations, it easily follows that the fibrant-cofibrant objects in the Cauchy model structure are the Cauchy complete symmetric $\bR_+$-categories. We now proceed to the proof of the model structure. In what follows, we write $\cW$, $\Cof$, and $\Fib$ respectively to denote the classes of weak equivalences, cofibrations, and fibrations in the Cauchy model structure. 

\subsection*{Axiom CM1} The fact that $\bR\text-\Cat^\sym$ is (co)complete is \autoref{R+Cat_cocomplete}.

\subsection*{Axiom CM2}\label{cauchy_CM2} The 2-out-of-3 axiom for $\cW$ follows
from Axiom CM2 for the Metric model structure by passing to the induced maps
between Cauchy completions and applying \autoref{prop:R_pastoral_equiv}.

\subsection*{Axiom CM3}\label{cauchy_CM3} 
The fact that $\cW$ is closed under retracts follows from Axiom CM3 for the Metric model structure by passing to the induced maps between Cauchy completions and applying \autoref{prop:R_pastoral_equiv}. The fact that $\Cof$ is closed under retracts follows by the observation that $\Cof=\{\bJ\to\ast\}_\perp$, where $\bJ$ is the symmetric $\bR_+$-category with two objects $j_1$ and $j_2$ such that $\bJ(j_1,j_2)=\infty$. Thus, since $\Cof$ may be characterized by a lifting property, it is closed under retracts (see \cite[Lemma 11.1.4]{RiehlCatHmptyThry}). For the same reason, $\Fib$ is also closed under retracts.

\subsection*{Axiom CM4}\label{cauchy_CM4} Suppose we have a lifting problem in $\RCat^\sym$ of the following form,
\[\begin{tikzcd}
	\bA & \bC \\
	\bB & \bD
	\arrow["f", from=1-1, to=1-2]
	\arrow["\pi", two heads, from=1-2, to=2-2]
	\arrow["\iota"', tail, from=1-1, to=2-1]
	\arrow["g", from=2-1, to=2-2]
\end{tikzcd}\]
where $\iota$ is a cofibration and $\pi$ is a fibration. Then we want to show that if either $\iota$ or $\pi$ is a weak equivalence then the lifting problem has a solution $L:\bB\to\bC$. In both cases, we will construct the lift by first fixing some data associated to each object $b$ in $\bB$, and then we will define the lift using these data. If $b$ is in the image of $\iota$, we will need to be more careful about our choices, specifically so that the lift $L$ we define satisfies $L\circ\iota=f$.

\textbf{Case 1:} Suppose $\iota$ is a weak equivalence, so it is fully faithful and dense. We first fix the following data for each object $b$ in $\bB$.
\begin{itemize}
	\item Because $\iota$ is fully faithful and dense, we may choose a Cauchy sequence $\ol{a_b}:=(a_{b,n})_{n\in\bN}$ in $\bA$ such that the sequence $\iota(\ol{a_b})$ converges to $b$. If $b$ is in the image of $\iota$, since $\iota$ is injective, there exists a unique $a_b\in\bA$ such that $\iota(a_b)=b$. In this case, we specifically choose $\ol{a_b}$ to be $\wh{a_b}$, the constant Cauchy sequence on $a_b$.
	\item Since $b$ is a limit of $\iota(\ol{a_b})$, $g(b)$ is a limit of $\pi(f(\ol{a_b}))=g(\iota(\ol{a_b}))$. Thus since $\pi$ is a fibration, by \autoref{lemma:characterization_of_fibrations_in_Cauchy_model_structure}, we may fix an object $c_b$ in $\bC$ such that $c_b$ is a limit of $f(\ol{a_b})$ and $\pi(c_b)=g(b)$. When $b$ is in the image of $\iota$, we specifically set $c_b:=f(a_b)$, as in this case $f(\ol{a_b})=f(\wh{a_b})$ is the constant sequence on $f(a_b)$, which clearly has $f(a_b)$ as a limit, and moreover $\pi(f(a_b))=g(\iota(a_b))=g(b)$, as needed.
\end{itemize}

With this, define $L:\bB\to\bC$ by $L(b):=c_b$. We claim that $L$ is an $\bR_+$-functor. Let $b,d\in\bB$. Then since $c_b$ and $c_d$ are limits of $f(\ol{a_b})$ and $f(\ol{a_d})$, respectively, we have
\[
  \bC(L(b),L(d))
  =\bC(c_b,c_d)
  =\lim_{n\to\infty}\bC(f(a_{b,n}),f(a_{d,n})),
\]
where the second equality is \autoref{lemma:limits_can_be_moved_out_of_distance_function}. Now because $f$ is an $\bR_+$-functor and $\iota$ is fully faithful, for all $n$ we know that 
\[\bC(f(a_{b,n}),f(a_{d,n}))\leq\bA(a_{b,n},a_{d,n})=\bB(\iota(a_{b,n}),\iota(a_{d,n})).\] 
Therefore, we have that
\[\bC(L(b),L(d))=\lim_{n\to\infty}\bC(f(a_{b,n}),f(a_{d,n}))\leq\lim_{n\to\infty}\bB(\iota(a_{b,n}),\iota(a_{d,n}))=\bB(b,d),\]
where the inequality follows by \autoref{prop:lim_of_comparables}, and the last
equality follows again by
\autoref{lemma:limits_can_be_moved_out_of_distance_function} and the fact $b$
and $d$ are limits of the sequences $\iota(\overline{a_b})$ and
$\iota(\overline{a_d})$ respectively. Thus, $L$ is an $\bR_+$-functor, as
desired. By \autoref{rmk:Rfunctorsobjects} it suffices to show that $L$ defines a lift to the original
lifting problem (that it satisfies $L\circ\iota=f$ and $\pi\circ L=g$) on objects, which follows
directly from construction.

\textbf{Case 2:} Suppose that $\pi$ is a weak equivalence, so it is fully faithful and dense. The construction of the lift here is very similar to the construction of the lift in Case 1: For each object $b$ in $\bB$, because $\pi$ is fully faithful and dense, we may choose a Cauchy sequence $\ol{c_b}$ in $\bC$ such that $\pi(\ol{c_b})$ converges to $g(b)$. If $b$ is in the image of $\iota$, then we specifically choose $\ol{c_b}$ to be the constant Cauchy sequence on $f(a_b)$, where $a_b$ is the unique object in $\bA$ satisfying $\iota(a_b)=b$. Then since $\pi$ is a fibration, by \autoref{lemma:characterization_of_fibrations_in_Cauchy_model_structure} there exists an object $c_b'$ in $\bC$ such that $c_b'$ is a limit of $\ol{c_b}$ and $\pi(c_b')=g(b)$. In the case $b$ is in the image of $\iota$, we specifically set $c_b':=f(a_b)$. Finally, we may define $L(b):=c_b'$. That $L$ is an $\bR_+$-functor follows from the fact that $g$ is an $\bR_+$-functor and $\pi$ is fully faithful. That $L$ is a solution to the lifting problem follows directly from construction.

\subsection*{Axiom CM5}\label{cauchy_CM5} Let $f:\bC\to\bD$ be an $\bR_+$-functor between symmetric $\bR_+$-categories. 

\textbf{Case 1:} First, we prove that $f$ may be factored as a trivial cofibration followed by a fibration. To start, we define a symmetric $\bR_+$-category $\bL$: The objects of $\bL$ are pairs $((x_n),y)$ where $(x_n)$ is a Cauchy sequence in $\bC$ and $y\in\bD$ is a limit of the Cauchy sequence $(f(x_n))$. Then we define
\[\bL(((x_n),y),((x_n'),y')):=\ol\bC(\ell_{(x_n)},\ell_{(x_n')})=\lim_{n\to\infty}\bC(x_n,x_n'),\]
where the second equality follows by \autoref{prop:Lmetric-on-cauchy-completion}. It is straightforward to verify that $\bL$ is a symmetric $\bR_+$-category since $\ol\bC$ is.

Next, define $\iota:\bC\to\bL$ by $\iota(c):=(\wh c,f(c))$, where $\wh c$ denotes the constant Cauchy sequence on $c$. 
It is clear that $\iota$ is injective on objects and fully faithful by construction. To show that $\iota$ is dense, 
let $((x_n),y)$ be an object in $\bL$. We wish to show that $((x_n),y)$ is a limit of the Cauchy sequence $(\iota(x_n))$. This is clear, as
\[\lim_{m\to\infty}\bL(((x_n),y),\iota(x_m))=\lim_{m\to\infty}\ol\bC(\ell_{(x_n)},\ell_{\wh{x_m}})=0,\]
where the last equality follows by \autoref{cor:lim_iota_cauchy_seq}, that
tells us $\ell_{(x_n)}$ is a limit of the sequence
$(\ell_{\wh{x_n}})_{n\in\bN}$. Thus we've shown $\iota$ is injective on
objects, fully faithful, and dense, so $\iota$ is a trivial cofibration as
desired.

Finally, we will define $\pi:\bL\to\bD$ and show that it is a fibration satisfying $\pi\circ\iota=F$. Given an object $((x_n),y)$ in $\bL$, we define $\pi((x_n),y):=y$. To see that $\pi$ is an $\bR_+$-functor, let $((x_n),y)$ and $((x_n'),y')$ be objects in $\bL$. Then since $y$ is a limit of $(f(x_n))$ and $y'$ is a limit of $(f(x_n'))$, by \autoref{lemma:limits_can_be_moved_out_of_distance_function}, we have:
\[\bD(\pi((x_n),y),\pi((x_n'),y'))=\bD(y,y')=\lim_{n\to\infty}\bD(f(x_n),f(x_n')).\]
Then because $f$ is an $\bR_+$-functor, by \autoref{prop:lim_of_comparables} we have:
\[\lim_{n\to\infty}\bD(f(x_n),f(x_n'))\leq\lim_{n\to\infty}\bC(x_n,x_n')=\bL(((x_n),y),((x_n'),y')).\]

It remains to show that $\pi$ is a fibration. We use the characterization afforded by \autoref{lemma:characterization_of_fibrations_in_Cauchy_model_structure}: Suppose $(z_n)_{n\in\bN}$ is a Cauchy sequence in $\bL$ and $d\in\bD$ is a limit of the sequence $(\pi(z_n))_{n\in\bN}$ in $\bD$. Then we wish to find an element $z$ in $\bL$ such that $z$ is a limit of $(z_n)$ and $\pi(z)=d$. For each $n\in\bN$, write $z_n=(\ol{x_n},y_n)$, where $\ol{x_n}$ is shorthand for a Cauchy sequence $(x_{n,m})_{m\in\bN}$ in $\bC$. Note that $(\ell_{\ol{x_n}})_{n\in\bN}$ is a Cauchy sequence as
\[\ol\bC(\ell_{\ol{x_n}},\ell_{\ol{x_m}})=\ol\bL(z_n,z_m),\]
and $(z_n)_{n\in\bN}$ is Cauchy in $\bL$ by definition. Thus, since $\ol\bC$ is
Cauchy complete (\autoref{prop:R-cauchy-complete-is-cauchy-complete}), there
exists a Cauchy sequence $\ol a=(a_n)$ in $\bC$ such that $\ell_{\ol a}$ is a
limit of $(\ell_{\ol{x_n}})$ in $\ol\bC$. Then, we claim $(\ol a,d)$ is a valid
element of $\bL$, i.e., that $d$ is a limit of $f(\ol a)$. If this is true, we are done, as $\pi(\ol a,d)=z$. By definition
\[\bL((\ol a,d),z_n)=\ol\bC(\ell_{\ol a},\ell_{\ol{x_n}}),\]
 and by construction $\ell_{\ol a}$ is a limit of $(\ell_{\ol{x_n}})_{n\in\bN}$. It follows that $(\ol a,d)$ is a limit of $(z_n)$.

Now, we prove that $d$ is a limit of $f(\ol a)$. Unfortunately, what follows is rather technical. First of all, recall $d$ is a limit of $(\pi(z_n))=\ol y$, where $\ol y$ denotes the sequence $(y_n)_{n\in\bN}$. We claim $\ell_{\ol y}$ is a limit of the sequence $(\ell_{f(\ol{x_n})})_{n\in\bN}$ in $\ol\bD$. Since $y_n$ is a limit of $f(\ol{x_n})$ and $\iota_\bD$ is an $\bR_+$-functor, we have that $\iota_\bD(y_n)=\ell_{\wh{y_n}}$ is a limit of $\iota_\bD(f(\ol{x_n}))$ in $\ol\bD$. We also know $\ell_{f(\ol{x_n})}$ is a limit of $\iota_\bD(f(\ol{x_n}))$ by \autoref{cor:lim_iota_cauchy_seq}. Since limits in $\ol\bD$ are unique (by \autoref{cor:R-Cauchy_completion_is_gaunt}), it follows that $\ell_{f(\ol{x_n})}=\ell_{\wh{y_n}}$. Hence, by \autoref{cor:lim_iota_cauchy_seq} again, we have that $\ell_{\ol y}$ is a limit of $(\ell_{\wh{y_n}})=(\ell_{f(\ol{x_n})})$, as desired. Finally, note that since $\ol f$ is an $\bR_+$-functor (\autoref{prop:induced_R-functor_between_completions}) and $\ell_{\ol a}$ is a limit of $(\ell_{\ol{x_n}})$, we have that $\ell_{f(\ol a)}=\ol f(\ell_{\ol a})$ is a limit of $(\ell_{f(\ol{x_n})})_{n\in\bN}=(\ol f(\ell_{x_n}))_{n\in\bN}$ as well. Again since limits in $\ol\bD$ are unique, it follows that $\ell_{f(\ol a)}=\ell_{\ol y}$, so the sequences $f(\ol a)$ and $\ol y$ are equivalent, and thus have the same limit by \autoref{lemma:equivalents-and-cauchy-coincide}. Hence, $d$ is a limit of $f(\ol a)$, as desired.

Thus, we have constructed a trivial cofibration $\iota:\bC\to\bL$ and a fibration $\pi:\bL\to\bD$. Finally, given an object $c$ in $\bC$, we have $\pi(\iota(c))=\pi(\wh c,f(c))=f(c)$. Then, by \autoref{rmk:Rfunctorsobjects}, $\pi\circ\iota=f$ as desired.

\textbf{Case 2:} On the other hand, we claim $f$ factors as a cofibration followed by a trivial fibration. We define a new category $\bL$ as follows: The objects of $\bL$ are defined by $\ob(\LL)=\ob(\CC)\amalg\Ob(\DD)$. We define the hom-objects as follows: 
\begin{equation*}
	\LL(x, y) =
	\begin{cases}
		\DD(f(x), y) &\mathrm{if }\, x\in\ob(\CC)\text{ and }y\in\ob(\DD)\\
		\DD(x, f(y)) &\mathrm{if }\, x\in\ob(\DD)\text{ and }y\in\ob(\CC)\\
		\DD(f(x), f(y)) &\mathrm{if }\, x,y\in\ob(\CC)\\
		\DD(x, y) &\mathrm{if }\, x,y\in\ob(\DD).
	\end{cases}
\end{equation*}
By construction, $\bL$ is a symmetric $\bR_+$-category. We now define
$\RR_+$-functors $\iota:\CC\to\LL$ and $\pi:\LL\to\DD$ as follows: 
\begin{itemize}
	\item $\iota(c) = c$.
	\item $\pi(x) = \begin{cases}
		f(x) &\mathrm{if }\, x\in\ob(\CC)\\
		x &\mathrm{if }\, x\in\ob(\DD)
	\end{cases}$
\end{itemize}
By construction $\iota$ is an $\bR_+$-functor that is injective on objects and $\pi$ is a fully faithful \textit{strictly} surjective $\bR_+$-functor. Moreover, we clearly have $\pi\circ\iota=f$. It remains to show that $\pi$ is a fibration. Again, we use the characterization of fibrations given in \autoref{lemma:characterization_of_fibrations_in_Cauchy_model_structure}. Suppose we have a Cauchy sequence $(x_n)$ in $\bL$ and an object $d$ in $\bD$ such that $d$ is a limit of $(\pi(x_n))$. It follows directly from construction that $d$, viewed as an element of $\ob\bL\spseq\ob\bD$, is a limit of $(x_n)$. Moreover, $\pi(d)=d$. Thus, $\pi$ is a fibration as desired.

\section{The Cauchy-Metric model structure on \texorpdfstring{$\RCat^\sym$}{R+-Cat{\^{}}sym}}\label{section:cauchy_metric_model_structure}

So far, we have constructed two model structures on $\mathbb{R}\text-\Cat^\sym$: the Metric and the Cauchy model structures. In this section, we construct a model structure that serves as a combination of the two: the fibrant-cofibrant objects are the Cauchy complete and gaunt $\mathbb{R}$-categories.

\begin{theorem}\label{theorem:cauchy_metric_model_structure}
	There is a model structure on $\bR_+\text-\Cat^\sym$ where:
	\begin{itemize}
    \item[($\cW$)] The weak equivalences $\cW$ are given by those
    $\bR_+$-functors that are fully faithful (an isometry) and dense
    (\autoref{def:dense_R+_functor}). 
		\item[($\Cof$)] Every $\bR_+$-functor is a cofibration.
		\item[($\Fib$)] The fibrations are the $\bR_+$-functors that have the right lifting property with respect to $\Delta$, $\Gamma$, and $\iota_\Seq$, i.e., $\Fib:=\{\Delta,\Gamma,\iota_\Seq\}_\perp$.
	\end{itemize}
  Moreover, the fibrant-cofibrant objects in this model structure are the gaunt Cauchy-complete $\mathbb{R}$-categories. We call this the \textit{Cauchy-metric} model structure on $\mathbb{R}\text-\Cat^\sym$. 
\end{theorem}

Note the fibrations in the Cauchy-Metric model structure are precisely the
morphisms that are fibrations in both the Metric and the Cauchy model
structures on $\mathbb{R}\text-\Cat^\sym$. Thus, the characterization of
the fibrant-cofibrant objects in the Cauchy-Metric model structure follows from
the characterization of the fibrant-cofibrant objects in the Metric and Cauchy
model structures. Before proving the rest of the theorem, we prove the
following propositions that will be useful for the proof. We
will write $\mathcal{W}$, $\Cof$, and $\Fib$ to denote respectively, the weak equivalences,
cofibrations, and fibrations in the Cauchy-Metric model structure.

\begin{proposition}\label{prop:trivial_fibrations_in_combined_are_isomorphisms}
  If $\Iso$ denotes the class of isomorphisms in $\mathbb{R}\text-\Cat^\sym$, then $\mathcal{W}\cap\Fib\subseteq\Iso$.
\end{proposition}
\begin{proof}
  By \autoref{lemma:delta_gamma_rlp_are_isomorph}, it suffices to show that
  every trivial fibration in the Cauchy-Metric model structure is a trivial
  fibration in the Metric model structure. If we write $\Cof_M$, $\Fib_M$, and
  $\mathcal{W}_M$ for the cofibrations, fibrations, and weak equivalences in
  the Metric model structure on $\mathbb{R}_+\text-\Cat^\sym$
  (\autoref{theorem:IoI_model_structure}), then clearly $\Fib\subseteq\Fib_M$,
  so it further suffices to show that
  $\mathcal{W}\cap\Fib\subseteq\mathcal{W}_M$. 

  Let $f\in \mathcal{W}\cap\Fib$ and let $d\in \bD$. Since $f$ is dense there
  exists a Cauchy sequence $(f(x_n))_{n\in \bN}$ that converges to $d$. Because
  $f$ is fully faithful $(x_n)_{n\in \bN}$ is also Cauchy
  (\autoref{prop:R-functors-preserve-everything}). By
  \autoref{lemma:characterization_of_fibrations_in_Cauchy_model_structure},
  there exists $c\in \bC$ such that $f(c) = d$ so $f$ is surjective and thus
  essentially surjective. The desired result follows.
\end{proof}

While the following remark is not integral to the proofs in this section, it provides some helpful intuition.

\begin{remark}\label{prop:fib_Delta_iota}
Standard model categorical techniques can be used to show that $\{\Gamma,\Delta,\iota_\Seq\}_\perp=\{\Delta,\iota_\Seq\}_\perp$.  
\end{remark}

%\begin{proof}
%  It suffices to show that ${\iota_\Seq}_\perp\sseq{\Gamma}_\perp$. Suppose
%  $\mathbb{C}$ and $\mathbb{D}$ are small, symmetric $\mathbb{R}_+$-categories
%  and $f:\mathbb{C}\to\mathbb{D}$ is a $\mathbb{R}_+$-functor belonging to
%  ${\iota_\Seq}_\perp$. Then by the characterization of $\Gamma_\perp$ afforded
%  by \autoref{remark:characterization_of_Delta_Gamma_lifting_properties}, it
%  suffices to show that $f$ is surjective on isomorphism classes, i.e., given
%  $x\in\mathbb{C}$ and $y\in\mathbb{D}$ with $f(x)\cong y$, there exists
%  $z\in\mathbb{C}$ such that $x\cong z$ and $f(z)=y$. Consider the constant
%  Cauchy sequence $\widehat{x}$ in $\mathbb{C}$. Then clearly $f(x)$ is a limit
%  of $f(\widehat{x})$, and $y\cong f(x)$, so by
%  \autoref{prop:isomorphic_limits}, we have that $y$ is a limit of
%  $f(\widehat{x})$ as well. Then since $f$ has the right lifting property
%  against $\iota_\Seq$, by
%  \autoref{lemma:characterization_of_fibrations_in_Cauchy_model_structure}
%  there exists some $z\in\mathbb{C}$ such that $z$ is a limit of $\widehat{x}$
%  and $f(z)=y$. Finally, since $z$ and $x$ are both limits of $\widehat{x}$, we
%  must have $z\cong x$ by \autoref{prop:isomorphic_limits}, as desired.
%\end{proof}

Finally, before giving the proof of
\autoref{theorem:cauchy_metric_model_structure}, we prove a useful lemma.

\begin{lemma}\label{ol_f_is_fibration_in_CM}
  Let $f:\mathbb{C}\to\mathbb{D}$ be a morphism in
  $\mathbb{R}_+\text-\Cat^\sym$, then the induced $\mathbb{R}_+$-functor on
  Cauchy completions $\overline{f}:\overline{\mathbb{C}}\to
  \overline{\mathbb{D}}$ constructed in
  \autoref{prop:induced_R-functor_between_completions} is a fibration in the
  Cauchy-Metric model structure, i.e., it belongs to
  $\left\{\Delta,\Gamma,\iota_\Seq\right\}_\perp$.
\end{lemma}
\begin{proof}
  By \autoref{cor:R-Cauchy_completion_is_gaunt}, $\overline{\mathbb{C}}$ and
  $\overline{\mathbb{D}}$ are gaunt, so clearly $f$ belongs to
  $\{\Delta,\Gamma\}_\perp$, by the characterization given in
  \autoref{remark:characterization_of_Delta_Gamma_lifting_properties}. Thus, it
  remains to show that $\overline{f}$ has the right lifting property against
  $\iota_\Seq:\Seq\to \overline{\Seq}$. To this end, we use the
  characterization afforded by
  \autoref{lemma:characterization_of_fibrations_in_Cauchy_model_structure}: Let
  $(x_n)$ be a Cauchy sequence in $\overline{\mathbb{C}}$ and let $y\in
  \overline{\mathbb{D}}$ be a limit of $(f(x_n))$. Since
  $\overline{\mathbb{C}}$ is Cauchy complete
  (\autoref{prop:R-cauchy-complete-is-cauchy-complete}), the sequence $(x_n)$
  has a limit, call it $x$. Then by
  \autoref{prop:R-functors-preserve-everything}, $f(x)$ is a limit of
  $(f(x_n))$. Finally, since $f(x)$ and $y$ are both limits of $(f(x_n))$ in
  $\overline{\mathbb{D}}$, that is gaunt, we have that $f(x)=y$ by
  \autoref{prop:isomorphic_limits}, as desired.
\end{proof}

We will now prove \autoref{theorem:cauchy_metric_model_structure}.

\subsection*{Axiom CM1} The fact that $\bR\text-\Cat^\sym$ is (co)complete is \autoref{R+Cat_cocomplete}.

\subsection*{Axiom CM2} 
This model structure has the same weak equivalences as the Cauchy model
structure (\autoref{theorem:cauchy_model_structure}), so the weak equivalences
are closed under 2-of-3.

\subsection*{Axiom CM3} Again, since the Metric and Cauchy-Metric model
structures have the same weak equivalences, $\mathcal{W}$ is
closed under retracts. The cofibrations are closed under
retracts, as every morphism is a cofibration. Finally, $\Fib$ is closed under
retracts as it is characterized by a lifting property.

\subsection*{Axiom CM4} Suppose we have a lifting problem in $\bR_+\text-\Cat^\sym$ of the following form,
\[\begin{tikzcd}
	\bA & \bC \\
	\bB & \bD
	\arrow["\iota"', tail, from=1-1, to=2-1]
	\arrow["g", from=2-1, to=2-2]
	\arrow["f", from=1-1, to=1-2]
	\arrow["\pi", two heads, from=1-2, to=2-2]
\end{tikzcd}\]
where $\iota$ is a cofibration and $\pi$ is a fibration.  We claim that if either is a weak equivalence the lifting problem has a solution.

\textbf{Case 1:} Suppose $\iota$ is a weak equivalence. The proof of this case
is very similar to the proof given for CM4 of the Metric model structure
(\autoref{metric_MS_CM4}), but there are some slight differences. First, we fix
some data: For each object $b$ in $\mathbb{B}$, since $\iota$ is dense, there
exists a sequence $\overline{a_b}=(a_{b,n})_{n\in\mathbb{N}}$ in $\mathbb{A}$
such that $\iota(\overline{a_b})$ is Cauchy with limit $b$. 
%In the case that $b$ is in the image of $\iota$, we specifically take
%$\overline{a_b}=\widehat{a_b}$ to be the constant sequence on some object
%$a_b\in\mathbb{A}$ satisfying $\iota(a_b)=b$.
Since $\iota$ is fully faithful, \autoref{prop:R-functors-preserve-everything}
yields that $\overline{a_b}$ is a Cauchy sequence in $\mathbb{A}$, and
therefore $f(\overline{a_b})$ is a Cauchy sequence in $\mathbb{C}$. Moreover,
$g(b)$ is a limit of $\pi(f(\overline{a_b}))=g(\iota(\overline{a_b}))$, and
$\pi$ has the right lifting property against $\iota_\Seq$, so by
\autoref{lemma:characterization_of_fibrations_in_Cauchy_model_structure} the
sequence $f(\overline{a_b})$ has a limit $c_b$ such that $\pi(c_b)=g(b)$.
%If $b$ is in the image of $\iota$, then recall $\overline{a_b}=\widehat{a_b}$
%is the constant sequence on an object $a_b$ satisfying $\iota(a_b)=b$, so that
%if we set $c_b:=f(a_b)$, we indeed have $c_b$ is a limit of $f(\overline{a_b})$
%which satisfies $\pi(c_b)=\pi(f(a_b)))=g(\iota(a_b))=g(b)$, as desired.
In the case $b$ is in the image of $\iota$, we specifically choose $a_b$ such
that $\iota(a_b)=b$, and set $\overline{a_b}=\widehat{a_b}$ to be the constant
sequence on $a_b$, and $c_b:=f(a_b)$.

Now, we define $\ell(b):=c_b$. First of all, $\ell$ is an
$\mathbb{R}_+$-functor, as given objects $b$ and $b'$ in $\mathbb{B}$, we have
\begin{align*}
  \mathbb{C}(\ell(b),\ell(b'))
  &=\mathbb{C}(c_b,c_{b'}) \\
  &\overset{(1)}=\lim_{n\to\infty}\mathbb{C}(f(a_{b,n}),f(a_{b',n})) \\
  &\overset{(2)}\leq \lim_{n\to\infty}\mathbb{A}(a_{b,n},a_{b',n}) \\
  &\overset{(3)}=\lim_{n\to\infty}\mathbb{B}(\iota(a_{b,n}),\iota(a_{b',n})) \\
  &\overset{(1)}=\mathbb{B}(b,b'),
\end{align*}
where each occurrence of (1) follows by
\autoref{lemma:limits_can_be_moved_out_of_distance_function} and because $c_b$
is a limit of $f(\overline{a_b})$ and $b$ is a limit of
$\iota(\overline{a_b})$, (2) follows since $f$ is an $\mathbb{R}_+$-functor,
and (3) follows because $\iota$ is fully faithful. By \autoref{rmk:Rfunctorsobjects} it suffices to check commutativity on objects. It then follows directly from the construction of $\ell$ that $\pi\circ\ell=g$. To show that $\ell\circ\iota=f$, let $a$ be an object in $\mathbb{A}$, and set $b:=\iota(a)$. Then by unravelling definitions, we have
\[
  \ell(\iota(a))
  =\ell(b)
  =c_b.
\]
Thus, it suffices to show $f(a)=c_b$. To see this, note since $\iota(a)=b$ is a
limit of the sequence $\overline{a_b}$ and $\iota$ is fully faithful, $a$ must
be a limit of $\overline{a_b}$ (\autoref{prop:R-functors-preserve-everything}),
so that $f(a)$ is a limit of $\iota(\overline{a_b})$. Then $f(a)\cong c_b$ by
\autoref{prop:isomorphic_limits}, since $c_b$ and $f(a)$ are both limits of
$\iota(\overline{a_b})$. Moreover, we have by definition that
\[
  \pi(c_b)
  =g(b)
  =g(\iota(a))
  =\pi(f(a)).
\]
Then since $\pi$ belongs to ${\{\Delta,\Gamma\}}_\perp$ and sends the
isomorphic objects $f(a)$ and $c_b$ to the same element in $\mathbb{D}$, it
follows by \autoref{remark:characterization_of_Delta_Gamma_lifting_properties}
that $f(a)$ and $c_b$ must be equal in the first place, as desired.

\textbf{Case 2:} Suppose $\pi$ is a weak equivalence. By \autoref{prop:trivial_fibrations_in_combined_are_isomorphisms}, $\pi$ is an isomorphism, 
so a lift exists.

\subsection*{Axiom CM5} Let $f:\bC\to\bD$ be an $\bR_+$-functor between symmetric $\bR_+$-categories. 

\textbf{Case 1:} We wish to construct a symmetric $\bR_+$-category $\bL$
and $\bR_+$-functors $\iota: \bC \to \bL$ and $\pi: \bL \to \bD$ such that
$\iota$ is a trivial cofibration, $\pi$ is a fibration, and $\pi \circ \iota =
f$. To that end, define $\mathbb{L}$, $\iota$, $\pi$, and $g:\mathbb{L}\to
\overline{\mathbb{C}}$ to fit into the following pullback diagram in
$\mathbb{R}_+\text-\Cat^\sym$
\[\begin{tikzcd}
	{\mathbb{C}} \\
	& {\mathbb{L}} & {\mathbb{D}} \\
	& {\overline{\mathbb{C}}} & {\overline{\mathbb{D}}}
	\arrow["\iota", dashed, from=1-1, to=2-2]
	\arrow["f", curve={height=-12pt}, from=1-1, to=2-3]
	\arrow["{\iota_{\mathbb{C}}}"', curve={height=12pt}, from=1-1, to=3-2]
	\arrow["\pi", from=2-2, to=2-3]
	\arrow["g"', from=2-2, to=3-2]
	\arrow["\lrcorner"{anchor=center, pos=0.125}, draw=none, from=2-2, to=3-3]
	\arrow["{\iota_{\mathbb{D}}}", from=2-3, to=3-3]
	\arrow["{\overline{f}}", from=3-2, to=3-3]
\end{tikzcd}\]
where $\overline{f}$ is constructed and proven to satisfy
$\iota_\mathbb{D}\circ f=\overline{f}\circ\iota_\mathbb{C}$ in
\autoref{prop:induced_R-functor_between_completions}.
%, and $\iota_\mathbb{C}:\mathbb{C}\to \overline{\mathbb{C}}$ is obtained by
%restricting the codomain of the Yoneda embedding
%(\autoref{remark:iota_C_Cauchy_completion}).
We know $\pi\circ\iota=f$ by definition, so it suffices to show that $\pi$ is a
fibration and $\iota$ is a trivial cofibration.

To see $\pi$ is a fibration, first note that $\Fib$ is characterized by a right
lifting property, meaning it is closed under pullbacks (this is a dual version
of the fact that a class of morphisms characterized by a left-lifting property
is closed under pushouts, see \cite[Lemma 11.1.4]{RiehlCatHmptyThry}). Thus
$\pi$ is a fibration by \autoref{ol_f_is_fibration_in_CM}.

To see $\iota$ is a trivial cofibration, we need to show it is fully faithful
and dense. To see it is fully faithful, first note that
$\mathbb{L}(\iota(c),\iota(c'))\leq\mathbb{C}(c,c')$ since $\iota$ is an
$\mathbb{R}_+$-functor, and on the other hand, because $\iota_\mathbb{C}$ is
fully faithful and $g$ is an $\mathbb{R}_+$-functor, we have
\[
  \mathbb{C}(c,c')
  =\overline{\mathbb{C}}(\iota_{\mathbb{C}}(c),\iota_{\mathbb{C}}(c'))
  =\overline{\mathbb{C}}(g(\iota(c)),g(\iota(c')))
  \leq\mathbb{L}(\iota(c),\iota(c')).
\]
Thus $\mathbb{C}(c,c')=\mathbb{L}(\iota(c),\iota(c'))$, so $\iota$ is fully
faithful, as desired. To see $\iota$ is dense, let $\ell\in\mathbb{L}$. Since $\iota_{\mathbb{C}}$ is dense
(\autoref{prop:equivalent_Cauchy_characterization}) and fully faithful, there
exists a Cauchy sequence $(x_n)$ in $\mathbb{C}$ such that $g(\ell)$ is a limit
of $(\iota_{\mathbb{C}}(x_n))$. Thus
$\iota_{\mathbb{D}}(\pi(\ell))=\overline{f}(g(\ell))$ is a limit of
$(\iota_{\mathbb{D}}(f(x_n)))=(\overline{f}(\iota_\mathbb{C}(x_n)))$ by
\autoref{prop:R-functors-preserve-everything}. Then because
$\iota_{\mathbb{D}}$ is fully faithful, it reflects limits
(\autoref{prop:R-functors-preserve-everything}), so $\pi(\ell)$ is a limit of
$(f(x_n))=(\pi(\iota(x_n)))$. Now, since $\pi$ is a fibration, it has the right
lifting property against $\iota_{\Seq}$, so by
\autoref{lemma:characterization_of_fibrations_in_Cauchy_model_structure} there
exists some $\ell'\in\mathbb{L}$ such that $\ell'$ is a limit of $(\iota(x_n))$
and $\pi(\ell')=\pi(\ell)$. Moreover, note that $g(\ell)$ and $g(\ell')$ are
both limits of $(g(\iota(x_n)))$ in $\overline{\mathbb{C}}$, and
$\overline{\mathbb{C}}$ is gaunt, so that $g(\ell)=g(\ell')$
(\autoref{prop:isomorphic_limits}). It follows purely formally that
$\ell=\ell'$ since $g(\ell)=g(\ell')$ and $\pi(\ell)=\pi(\ell')$.%
\footnote{
  Suppose for the sake of a contradiction that $\ell\neq\ell'$. Then consider
  the maps $p:\ast\to \overline{\mathbb{C}}$ and $q:\ast\to \mathbb{D}$ picking
  out $g(\ell)$ and $\pi(\ell)$, respectively. Clearly $\overline{f}\circ
  p=\iota_{\mathbb{D}}\circ q$, so by the universal property of the pullback
  there exists a \emph{unique} arrow $h:\ast\to\mathbb{L}$ satisfying $g\circ
  h=p$ and $\pi\circ h=q$. Yet we can construct two distinct maps
  $\ast\to\mathbb{L}$ satisfying these equalities, namely, the maps picking out
  either $\ell$ or $\ell'$. We reach a contradiction, we must have had that
  $\ell=\ell'$ in the first place.
}
Thus $\ell$ is a limit of $(\iota(x_n))$, as desired, meaning $\iota$ is indeed
dense.

\textbf{Case 2:}  On the other hand, since every morphism is a cofibration and
every isomorphism is a trivial fibration, we have that $f=\id_{\mathbb{D}}\circ
f$ factors as a cofibration followed by a trivial fibration, as desired.

\begin{proposition}\label{lemma:cauchy_metric_uniqueness}
  The Cauchy-Metric model structure
  (\autoref{theorem:cauchy_metric_model_structure}) is the \emph{unique} model
  structure on $\RCat^{\text{sym}}$ where
	\begin{enumerate}
		\item The weak equivalences are the fully faithful and dense, and 
    \item Every fibrant-cofibrant object is gaunt.
	\end{enumerate}
\end{proposition}
\begin{proof}
  The proof is very similar to that of \autoref{thm:metric_MS_unique}, so we
  only give a sketch here. To start, one fixes a model structure $\mathcal{M}$
  on $\mathbb{R}_+\text-\Cat^\sym$ with fibrations $\Fib$, cofibrations $\Cof$,
  and weak equivalences $\mathcal{W}$ satisfying (1) and (2) above. Essentially
  the exact same argument given in Lemmas \ref{lemma:0_to_b_cof} and
  \ref{cor:inj_is_cof} (using that weak equivalences are dense, so the domain
  of a weak equivalence cannot be empty) yields that every injective
  $\mathbb{R}_+$-functor is a cofibration in $\mathcal{M}$. Then one can
  further show that the class of cofibrations cannot be the class of injections: If
  this was the case, then you could directly show that the category
  $\mathbb{I}$ (\autoref{I_Delta_Gamma_defns}) is both fibrant and cofibrant,
  contradicting the fact that the fibrant-cofibrant objects are gaunt. Then the
  same argument given in
  \autoref{Delta_Gamma_are_trivial_cofibrations_uniqueness_proof_lemma} would
  yield that $\Delta$ and $\Gamma$ are trivial cofibrations in $\mathcal{M}$,
  and clearly, $\iota_\Seq$ would be a trivial cofibration as well by the
  characterization of $\overline{\Seq}$ given following \autoref{def:Seq}. Thus
  every fibration in $\mathcal{M}$ would be a fibration in the Cauchy-Metric
  model structure, so by
  \autoref{prop:trivial_fibrations_in_combined_are_isomorphisms}, we would have
  that $\mathcal{W}\cap\Fib \subseteq\Iso$. Yet we also know that the
  isomorphisms lift against every morphism, so we'd have that
  $\Iso\subseteq{(\Cof)}_\perp=\mathcal{W}\cap\Fib$. Thus the fibrations and
  weak equivalences have been uniquely determined; the desired result follows.
\end{proof}

\begin{remark}\label{rmk:localizations}
	It is worth briefly mentioning the relation of our three model structures to one another, working on $\RCat^\sym$. We will write $(\RCat^\sym)_{\on{Metric}}$, $(\RCat^\sym)_{\on{Cauchy}}$, and $(\RCat^\sym)_{\on{MC}}$ for the metric, Cauchy, and Cauchy-Metric model structures, respectively. The identity functor then defines a left Quillen functor
	\[
	\begin{tikzcd}
		(\RCat^\sym)_{\on{Metric}}\arrow[r] &(\RCat^\sym)_{\on{MC}}
	\end{tikzcd}
	\]  
	Since the sets of cofibrations in the two model structure coincide, we can go further, and note that this is a left Bousfield localization.
	
	On the other hand, we can note that the identity functor on $\RCat^\sym$ also defines a left Quillen functor
	\[
	\begin{tikzcd}
		(\RCat^\sym)_{\on{Cauchy}}\arrow[r] & (\RCat^\sym)_{\on{MC}}.
	\end{tikzcd}
	\]
	Though this is not a left Bousfield localization.  
\end{remark}

\appendix

\section{Analysis in \texorpdfstring{$\bR_+\text-\Cat$}{R+-Cat}}\label{app:analysis}

The appendix collects technical results on limits and Cauchy sequences in Lawvere metric spaces. Much of this material closely parallels the analysis of metric spaces, but because of key distinctions, care must be taken in generalizing results to this setting. The basic arithmetic operations ($+$, $-$, etc.) use different conventions in $\RR_+$-categories than in more conventional metric spaces. As such, many results are proven in cases, depending on whether distances are infinite. Note also that the limit of a sequence in $\RR_+$ is only ever $\infty$ when that sequence eventually becomes constant on $\infty$. The proofs of many of the statements below are entirely similar to their metric-space analogs or are straightforward, in which case no proof is given.

Some of the analysis of limits in Lawvere metric spaces is considered in the papers \cite{Rutten} and \cite{Bon}, where the authors consider directional versions of limits in non-symmetric Lawvere metric spaces. As such, while there are connections between these works and the results of this appendix, there does not appear to be any redundancy.

\begin{lemma}\label{lemma:arithmetic_in_R+}
	Let $a,b,c\in \RR_+$. Then we have the following: 
	\begin{enumerate}[label=(\roman*)]
		\item $(a+b)-a\leq b$.
		\item $|(a-b)-c|\leq|a-c|+|b|$, in particular, $|a-b|\leq|a|+|b|$.
		\item $a+b\geq c\iff a\geq c-b$.
	\end{enumerate}
\end{lemma}
\begin{proof}
	These follow immediately for finite values and can be easily checked for infinite values. 
\end{proof}

\begin{lemma}\label{prop:lim_of_sum}
  Given two sequences $(a_n)$ and $(b_n)$ in $\mathbb{R}_+$,
  \[
    \lim_{n\to\infty}(a_n+b_n)=\lim_{n\to\infty}a_n+\lim_{n\to\infty}b_n,
  \]
  provided both limits on the right-hand side exist.
\end{lemma}
\begin{proof}
  Proving the statement when the limits of $(a_n)$ and $(b_n)$ are
  finite is standard. If, without loss of generality, $a_n\to\infty$, then by definition the sequence
  $(a_n)$ is eventually constant on $\infty$, so that $(a_n+b_n)$ is eventually
  infinite as well, and the result follows.
\end{proof}

\begin{lemma}\label{prop:lim_of_comparables}
  Let $(a_n)$ and $(b_n)$ be two convergent sequences in $\mathbb{R}_+$, and
  suppose there exists some natural number $N$ such that $a_n\leq b_n$ for all
  $n\geq N$. Then
  \[
    \lim_{n\to\infty}a_n\leq\lim_{n\to\infty}b_n.
  \]
  In particular, if $(a_n)$ is eventually bounded above by a constant
  $M\in\mathbb{R}_+$, then
  \[
    \lim_{n\to\infty}a_n\leq M.
  \]
\end{lemma}

%\begin{proposition}\label{prop:equivalence_of_Cauchy_gives_equiv_relation}
%  Equivalence of Cauchy sequences as in
%  \autoref{def:equivalence_of_Cauchy_seqs} is an equivalence relation on the
%  set of Cauchy sequences in a (small) symmetric $\bR_+$-category $\bC$.
%\end{proposition}
%\begin{proof}
%  Reflexivity and symmetry are immediate from the definition. Transitivity
%  follows by the triangle inequality and \autoref{prop:lim_of_sum}.
%\end{proof}

\begin{lemma}\label{lemma:equivalents-and-cauchy-coincide}
  Let $(x_n)$ and $(y_n)$ be sequences in an $\RR_+$-category $\CC$. Then  
  \begin{enumerate}[label={(\roman*)},noitemsep]
    \item If $(x_n)$ is Cauchy, every subsequence of $(x_n)$ is equivalent to
    $(x_n)$.
    \item If $(x_n)$ is equivalent to $(y_n)$, $(x_n)$ is Cauchy if and only if
    $(y_n)$ is. 
    \item If $(x_n)$ is equivalent to $(y_n)$, $z\in\mathbb{C}$ is a limit of
    $(x_n)$ if and only if $z$ is a limit of $(y_n)$.
	\end{enumerate}
\end{lemma}

\begin{lemma}\label{lemma:diagonal-is-equivalent}
  Let $(x_n)$ be a sequence in an $\bR_+$-category $\bC$ such that each $x_n$
  is a limit of some sequence $(x_{n,m})_{m\in\NN}$. Then there exists a
  ``diagonal sequence'' $(d_n)$ where each $d_n$ belongs to the sequence
  $(x_{n,m})_{m\in\mathbb{N}}$, and $(d_n)$ is equivalent to $(x_n)$.
\end{lemma}
\begin{proof}[Construction]
  For each $n\in\mathbb{N}$, set $d_n:=x_{n,N_n}$, where $N_n\in\mathbb{N}$
  satisfies
  \[
    m\geq N_n\implies\max(\bC(x_{n,m},x_n),\bC(x_n,x_{n,m}))<\frac1n.\qedhere
  \]
  %Given $\vare>0$, choose a natural number $N$ such that
  %$1/N<\vare$. Then for all $n\geq N$, we have
  %\[
  %  \max(\bC(d_n,x_n),\bC(x_n,d_n))
  %  =\max(\bC(x_{n,N_n},x_n),\bC(x_n,x_{n,N_n}))
  %  <\frac1n
  %  \leq\frac1N
  %  <\varepsilon,
  %\]
	%so it follows $(d_n)\sim(x_n)$ as desired.
\end{proof}

\begin{remark}\label{rmk:equivalence_of_Cauchy_gives_equiv_relation}
Equivalence of Cauchy sequences as in \autoref{def:equivalence_of_Cauchy_seqs} is an equivalence relation on the set of Cauchy sequences in a symmetric $\bR_+$-category $\bC$. The transitivity of this relation follows from \autoref{prop:lim_of_sum}.
\end{remark}

\begin{lemma}\label{lemma:lim_of_cauchy_seqs_exists}
  Given a symmetric ${\bR_+}$-enriched category $\bC$ and two Cauchy sequences
  $(x_n)$ and $(y_n)$ in $\bC$, the following limit exists in $\mathbb{R}_+$.
  \[
    \lim_{n\to\infty}\bC(x_n,y_n)
  \]
\end{lemma}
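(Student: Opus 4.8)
The plan is to reduce the statement to a fact about the (extended-)real sequence $a_n := \bC(x_n,y_n)$: I will show that this sequence is eventually either constant on $\infty$ or a genuine Cauchy sequence of finite reals, and in each case conclude that it converges in $\RR_+$. The starting point is to apply the Cauchy conditions for $\{x_n\}$ and $\{y_n\}$ at level $\vare = 1$ to fix an $N\in\bN$ with $\bC(x_n,x_m) < 1$ and $\bC(y_n,y_m) < 1$ for all $n,m\ge N$; from here on only indices $\ge N$ matter.

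A double application of the triangle inequality (inserting $x_n$ and then $y_m$, and using symmetry of $\bC$) gives, for all $n,m\ge N$,
\[
\bC(x_m,y_m)\ \le\ \bC(x_m,x_n)+\bC(x_n,y_n)+\bC(y_n,y_m)\ \le\ \bC(x_n,y_n)+2 .
\]
If $\bC(x_m,y_m)=\infty$ for some $m\ge N$, then the right-hand side must itself be $\infty$, which (since $2<\infty$) forces $\bC(x_n,y_n)=\infty$ for every $n\ge N$. Thus $\{a_n\}$ is eventually constant on $\infty$, and by the description of limits in $\RR_+$ recorded after \autoref{def:limit} this is exactly what it means for $a_n\to\infty$.

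In the remaining case $a_n=\bC(x_n,y_n)$ is finite for every $n\ge N$, so all the arithmetic becomes ordinary. Subtracting the finite quantity $\bC(x_m,y_m)$ in the displayed inequality above, and doing the same with the roles of $n$ and $m$ reversed, and using symmetry (this is the reverse triangle inequality, cf.\ \autoref{lemma:arithmetic_in_R+}), yields
\[
\bigl|\bC(x_n,y_n)-\bC(x_m,y_m)\bigr|\ \le\ \bC(x_n,x_m)+\bC(y_n,y_m)\qquad (n,m\ge N).
\]
Given $\vare>0$, choosing $N'\ge N$ witnessing the Cauchy condition for both sequences at level $\vare/2$ makes the right-hand side $<\vare$, so $\{a_n\}_{n\ge N}$ is a Cauchy sequence of real numbers and hence converges to some $L\in\RR$. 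Since for sequences of finite reals the $\RR_+$-notion of limit agrees with the classical one, $\lim_{n\to\infty}\bC(x_n,y_n)=L$, which finishes the argument.

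I expect the only real subtlety to be bookkeeping with the $\RR_+$ conventions for $\infty$: specifically, isolating the case in which the distances $\bC(x_n,y_n)$ become infinite and recognizing that convergence to $\infty$ in $\RR_+$ is only possible for sequences that are eventually constant on $\infty$ — which is precisely what the triangle-inequality estimate delivers. Once that case is dispatched, the finite case is a routine transcription of the classical proof that the distance between two Cauchy sequences converges.
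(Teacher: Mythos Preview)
Your proof is correct and follows essentially the same approach as the paper: both arguments show that the real sequence $\bC(x_n,y_n)$ is Cauchy in $\RR_+$ via the triangle inequality, with the main care going into the handling of $\infty$. The only difference is organizational: you perform the eventually-$\infty$/eventually-finite dichotomy once at the outset (using $\vare=1$), while the paper verifies the Cauchy condition directly and handles the $\infty$ case inside that verification using the convention $\infty-\infty=0$; both routes lead to the same estimate $|\bC(x_n,y_n)-\bC(x_m,y_m)|\le \bC(x_n,x_m)+\bC(y_n,y_m)$ in the finite case.
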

\begin{proof}
  Note $(\bC(x_n,y_n))_{n\in\bN}$ is a sequence in ${\bR_+}$, so in order to
  show it has a unique limit it suffices to show it is Cauchy.%
  \footnote{
    This is because $\bR_+$ is a complete extended metric space, i.e., every
    Cauchy sequence has a limit in $\bR_+$.
  } 
  Let $\vare>0$ be given. Choose $N$ such that $\CC(x_n,x_m)$ and
  $\CC(y_n,y_m)$ are both bounded above by $\frac{\vare}{2}$ for $n,m\geq
  N$. Let $n,m\geq N$, and suppose without loss of generality that
  $\CC(x_n,y_n)\geq \CC(x_m,y_m)$. Then 
	\begin{align*}
		|\bC(x_n,y_n)-\bC(x_m,y_m)|
    &\leq(\bC(x_n,x_m)+\bC(x_m,y_n))-\bC(x_m,y_m) \\
		&\leq(\bC(x_n,x_m)+(\bC(x_m,y_m)+\bC(y_m,y_n)))-\bC(x_m,y_m) \\
		&=((\bC(x_n,x_m)+\bC(y_m,y_n))+\bC(x_m,y_m))-\bC(x_m,y_m) \\
    &\leq\mathbb{C}(x_n,x_m)+\mathbb{C}(y_m,y_n) \\
    &<\frac{\varepsilon}{2}+\frac{\varepsilon}{2}=\varepsilon,
	\end{align*}
  where the second-to-last inequality is
  \autoref{lemma:arithmetic_in_R+}(i). Hence $\CC(x_n,y_n)$ is a Cauchy
  sequence in $\mathbb{R}_+$, as desired.
\end{proof}

\begin{lemma}\label{prop:isomorphic_limits}
  Let $(x_n)$ be a sequence with a limit $y$ in an $\bR_+$-category $\bC$. Then
  for each $z\in\bC$, the following are equivalent:
  \begin{enumerate}[label=(\arabic*)]
		\item The objects $y$ and $z$ are isomorphic (i.e., $\bC(y,z)=\bC(z,y)=0$)
		\item The object $z$ is also a limit of $\{x_n\}$.
	\end{enumerate}
	In particular, if $\CC$ is a gaunt $\bR_+$-category, then limits in $\CC$ are unique.
\end{lemma}
\begin{proof}
  Let $y$ be a limit of $(x_n)$, then if $z\cong y$, we have by the triangle
  inequality and symmetry that $\mathbb{C}(x_n,z)=\mathbb{C}(x_n,y)$ and
  $\mathbb{C}(z,x_n)=\mathbb{C}(y,x_n)$, so that since
  $\mathbb{C}(x_n,y),\mathbb{C}(y,x_n)\to 0$, so it follows that
  $\mathbb{C}(x_n,z),\mathbb{C}(z,x_n)\to 0$, meaning $z$ is a limit of
  $(x_n)$, as desired. On the other hand, if $y$ and $z$ are limits of $(x_n)$,
  given a positive integer $n$, by the triangle inequality,
  \autoref{prop:lim_of_comparables}, and \autoref{prop:lim_of_sum}, we have
  \[
    \mathbb{C}(y,z)
    =\lim_{n\to\infty}\mathbb{C}(y,z)
    \leq\lim_{n\to\infty}(\mathbb{C}(x_n,z)+\mathbb{C}(y,x_n))
    =0,
  \]
  where the limits are guaranteed to exist by
  \autoref{lemma:lim_of_cauchy_seqs_exists}. By symmetry we have
  $\mathbb{C}(z,y)=0$ as well.
\end{proof}

\begin{lemma}\label{lemma:rev_tri_ineq_limit_version}
  Given a symmetric $\bR_+$-category $\bC$ and three Cauchy sequences $(x_n)$,
  $(y_n)$, and $(z_n)$ in $\bC$,
  \begin{align*}
    \lim_{n\to\infty}\bC(x_n,z_n)-\lim_{n\to\infty}\bC(y_n,z_n)
    &\leq\left|\lim_{n\to\infty}\bC(x_n,z_n)-\lim_{n\to\infty}\bC(y_n,z_n)\right| \\
    &\leq\lim_{n\to\infty}\bC(x_n,y_n).
  \end{align*}
\end{lemma}
\begin{proof}
  The existence of the limits involved is
  \autoref{lemma:lim_of_cauchy_seqs_exists}. The first inequality is clear by
  symmetry. Now, suppose without loss of generality that
  \[
    \lim_{n\to\infty}\mathbb{C}(y_n,z_n)
    \leq \lim_{n\to\infty}\mathbb{C}(x_n,z_n),
  \]
  Then by the triangle inequality,
  \autoref{prop:lim_of_comparables}, and \autoref{prop:lim_of_sum}, we have
  \[
    \lim_{n\to\infty}\bC(x_n,z_n)
    \leq\lim_{n\to\infty}(\bC(x_n,y_n)+\bC(y_n,z_n))
    =\lim_{n\to\infty}\bC(x_n,y_n)+\lim_{n\to\infty}\bC(y_n,z_n).
  \]
	Therefore,
	\begin{align*}
    \lim_{n\to\infty}\mathbb{C}(x_n,z_n)-&\lim_{n\to\infty}\mathbb{C}(y_n,z_n) \\
    &\leq\left(\lim_{n\to\infty}\bC(x_n,y_n)+\lim_{n\to\infty}\bC(y_n,z_n)\right)-\lim_{n\to\infty}\bC(y_n,z_n) \\
		&\leq\lim_{n\to\infty}\bC(x_n,y_n),
	\end{align*}
	where the last inequality is \autoref{lemma:arithmetic_in_R+}(i). The desired result follows.
\end{proof}

\begin{lemma}\label{lemma:weird_triangle_inequality_limit_lemma_thingy}
  Given an ${\bR_+}$-category $\bC$ and three Cauchy sequences $(x_n)$,
  $(y_n)$, and $(z_n)$ in $\bC$ such that
  \[
    \lim_{n\to\infty}\bC(x_n,y_n)
    <\infty,
  \]
  we have
  \[
    \lim_{n\to\infty}\bC(x_n,z_n)=\infty
    \ \iff\ 
    \lim_{n\to\infty}\bC(y_n,z_n)=\infty.
  \]
\end{lemma}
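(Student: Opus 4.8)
The plan is to reduce the biconditional to a statement about boundedness of $\RR_+$-valued sequences and then invoke the elementary limit facts already in the appendix. First note that all three limits $\lim_{n\to\infty}\bC(x_n,z_n)$, $\lim_{n\to\infty}\bC(y_n,z_n)$, and $\lim_{n\to\infty}\bC(x_n,y_n)$ exist by \autoref{lemma:lim_of_cauchy_seqs_exists}, since $\{x_n\}$, $\{y_n\}$, $\{z_n\}$ are all Cauchy. I will also use repeatedly the key peculiarity of $\RR_+$ recalled at the start of the appendix: a convergent sequence in $\RR_+$ has limit $\infty$ if and only if it is eventually constant on $\infty$; equivalently, a limit that is not $\infty$ is a finite real number.

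The biconditional is symmetric under interchanging $\{x_n\}$ and $\{y_n\}$: here we use that $\bC$ is symmetric, so $\lim_{n\to\infty}\bC(y_n,x_n)=\lim_{n\to\infty}\bC(x_n,y_n)=L<\infty$ is again a finite bound. Hence it suffices to prove a single implication, say that $\lim_{n\to\infty}\bC(x_n,z_n)=\infty$ forces $\lim_{n\to\infty}\bC(y_n,z_n)=\infty$, and I would prove this by contraposition.

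So suppose $\lim_{n\to\infty}\bC(x_n,z_n)\neq\infty$; by the remark above this limit is some finite $M<\infty$. The triangle inequality in $\bC$ gives $\bC(y_n,z_n)\leq\bC(y_n,x_n)+\bC(x_n,z_n)$ for all $n$. By \autoref{prop:lim_of_sum}, since both $L$ and $M$ are finite, $\lim_{n\to\infty}\bigl(\bC(y_n,x_n)+\bC(x_n,z_n)\bigr)=L+M<\infty$; in particular the sequence $\{\bC(y_n,x_n)+\bC(x_n,z_n)\}$ is eventually bounded above, say by $L+M+1$. Therefore $\{\bC(y_n,z_n)\}$ is eventually bounded above by $L+M+1$, and \autoref{cor:bounded_seq_bounded_lim} yields $\lim_{n\to\infty}\bC(y_n,z_n)\leq L+M+1<\infty$, so in particular it is not $\infty$. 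This establishes the contrapositive, and the reverse implication follows by the symmetry noted above.

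I do not anticipate a serious obstacle. The only point requiring care is the bookkeeping around infinite values — specifically the observation that ``limit not equal to $\infty$'' forces the limit to be a finite real (rather than the sequence failing to converge altogether), since this is precisely what allows \autoref{prop:lim_of_sum} to be applied to two finite limits and hence what makes the dominating sequence eventually bounded.
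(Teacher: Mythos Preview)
Your proposal is correct and takes essentially the same approach as the paper: reduce to one direction by symmetry of $\bC$, argue by contraposition, apply the triangle inequality, and use \autoref{prop:lim_of_sum} to bound $\lim_{n}\bC(y_n,z_n)$ by a finite quantity. The only cosmetic difference is that the paper invokes \autoref{prop:lim_of_comparables} directly to conclude $\lim_{n}\bC(y_n,z_n)\leq L+M$, whereas you pass through an eventual bound $L+M+1$ and cite \autoref{cor:bounded_seq_bounded_lim}; either route works.
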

\begin{proof}
  By symmetry, it suffices to show one direction. Suppose that 
  \[
    \lim_{n\to\infty}\CC(x_n,z_n)<\infty.
  \]
  Then by the triangle inequality, \autoref{prop:lim_of_comparables}, and
  \autoref{prop:lim_of_sum}, we have
  \[
    \lim_{n\to\infty}\CC(y_n,z_n)
    \leq\lim_{n\to\infty}(\mathbb{C}(x_n,z_n)+\mathbb{C}(y_n,x_n))
    <\infty.\qedhere
  \]
\end{proof}

\begin{lemma}\label{lemma:double_indexed_limit_can_be_exchanged_for_one_limit}
  Given a symmetric ${\bR_+}$-enriched category $\bC$ and two Cauchy sequences
  $(x_n)$ and $(y_n)$ in $\bC$,
  \[
    \lim_{m\to\infty}\lim_{n\to\infty}\bC(x_m,y_n)
    =\lim_{n\to\infty}\bC(x_n,y_n).
  \]
\end{lemma}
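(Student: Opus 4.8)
The plan is to reduce the claim to a one-variable squeeze argument in $\RR_+$, powered by the (reverse) triangle inequalities already established for limits of Cauchy sequences.

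First I would record that every limit in sight exists. For each fixed $m$ the constant sequence $\{x_m\}_{n\in\NN}$ is trivially Cauchy, so \autoref{lemma:lim_of_cauchy_seqs_exists} applied to it and $\{y_n\}$ shows that the inner limit $a_m \defeq \lim_{n\to\infty}\bC(x_m,y_n)$ exists; the same lemma gives that $L \defeq \lim_{n\to\infty}\bC(x_n,y_n)$ exists. Since $\RR_+$ satisfies the identity of indiscernibles, limits of sequences in $\RR_+$ are unique (\autoref{prop:isomorphic_limits}), so the identity to be proved is exactly the assertion that the sequence $\{a_m\}_{m\in\NN}$ converges to $L$.

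The main step is the estimate $|a_m - L| \le c_m$, where $c_m \defeq \lim_{n\to\infty}\bC(x_m,x_n)$ (which exists by \autoref{lemma:lim_of_cauchy_seqs_exists}), together with $c_m \to 0$. The inequality is immediate from \autoref{lemma:useful_limit_inequality_thingy}, specifically \autoref{eq:3}, applied to the three Cauchy sequences $\{x_m\}_n$ (constant in $n$), $\{x_n\}_n$, and $\{y_n\}_n$. For the convergence $c_m\to 0$, fix $\vare>0$; since $\{x_n\}$ is Cauchy and $\bC$ is symmetric there is $N$ with $\bC(x_m,x_n)<\vare/2$ for all $m,n\ge N$, so for $m\ge N$ the sequence $\{\bC(x_m,x_n)\}_n$ is eventually bounded above by $\vare/2$, whence $c_m\le\vare/2<\vare$ by \autoref{cor:bounded_seq_bounded_lim}. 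Thus $\{a_m\}$ is squeezed toward $L$ by $\{c_m\}$.

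It then remains to deduce $a_m\to L$ from $|a_m-L|\le c_m\to 0$, which is where a small amount of care is needed because of the truncated arithmetic of $\RR_+$. If $L<\infty$, then $c_m<\infty$ for large $m$, so $|a_m-L|<\infty$ forces $a_m<\infty$ (otherwise $|a_m-L|=\infty$), and the ordinary real-number squeeze finishes it. If $L=\infty$, then for large $m$ the bound $|a_m-\infty|\le c_m<\infty$ forces $a_m=\infty$ (since $|t-\infty|=\infty$ for finite $t$, while $\infty-\infty=0$), so $\{a_m\}$ is eventually constant at $\infty$ and hence converges to $\infty=L$; this case can also be obtained directly from \autoref{lemma:weird_triangle_inequality_limit_lemma_thingy} applied to $\{x_m\}_n$, $\{x_n\}_n$, $\{y_n\}_n$. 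I expect this final case split to be the only genuine obstacle: for finite distances the whole argument is the standard interchange-of-limits squeeze, and the real work is just checking that the conventions of $\RR_+$ do not let $a_m$ escape to, or sit away from, $\infty$.
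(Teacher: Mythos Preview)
Your proposal is correct and essentially follows the paper's own proof: both establish existence of the inner limits via \autoref{lemma:lim_of_cauchy_seqs_exists}, bound $|a_m-L|$ by $c_m=\lim_{n\to\infty}\bC(x_m,x_n)$ using \autoref{lemma:useful_limit_inequality_thingy}, control $c_m$ via the Cauchy condition and \autoref{cor:bounded_seq_bounded_lim}, and then split on whether the diagonal limit is finite or infinite. The only cosmetic difference is that the paper invokes \autoref{lemma:weird_triangle_inequality_limit_lemma_thingy} up front to force $a_m=\infty\iff L=\infty$ before applying the estimate, whereas you apply the estimate first and extract the same dichotomy from the $\RR_+$-arithmetic of $|a_m-L|$; these are the same argument in a different order.
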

\begin{proof}
  By \autoref{lemma:lim_of_cauchy_seqs_exists}, we know that
  \[
    D:=\lim_{n\to\infty}\mathbb{C}(x_n,y_n)
    \qquad\text{and}\qquad
    L_m:=\lim_{n\to\infty}\bC(x_m,y_n)
  \]
  exist in $\mathbb{R}_+$ for each $m\in\mathbb{N}$. Then the statement we wish
  to show is that $L_m\to D$. Let $\vare>0$ be given. Since $(x_n)$ is a Cauchy
  sequence, there exists some $M\in\bN$ such that
  $\mathbb{C}(x_n,x_m)<\frac{\varepsilon}{2}$ for all $n,m\geq M$. Fix $m\geq
  M$. We know
  \[
    \lim_{n\to\infty}\bC(x_n,x_m)\leq\frac\vare2,
  \]
  by Lemmas \ref{lemma:lim_of_cauchy_seqs_exists} and
  \ref{prop:lim_of_comparables}. Thus, by
  \autoref{lemma:weird_triangle_inequality_limit_lemma_thingy} applied to the
  sequences $(x_n)$, $\widehat{x_m}$,%
  \footnote{
    Recall $\widehat{x_m}$ denotes the constant sequence on $x_m$.
  }
  and $(y_n)$, we have $D=\infty$ if and only if $L_m=\infty$. Hence, we may
  split the remaining proof into two cases.
	
  \textbf{Case 1:} $L_m=D=\infty$. In this case, we've shown that for all
  $m\geq M$,
	\[\left|L_m-D\right|=|\infty-\infty|=|0|=0<\vare.\]
	
  \textbf{Case 2:} $D, L_m<\infty$. In this case we by
  \autoref{lemma:rev_tri_ineq_limit_version} that
	\begin{equation*}
		\left|L_m-D\right|
    =\left|\lim_{n\to\infty}\bC(x_m,y_n)-\lim_{n\to\infty}\bC(x_n,y_n)\right|
    \leq\lim_{n\to\infty}\bC(x_m,x_n)
    \leq\frac\vare2
    <\vare.
	\end{equation*}
  In both cases, we have shown that $|L_m-D|<\varepsilon$ for all $m\geq M$, as
  desired.
\end{proof}

\begin{lemma}\label{prop:R-functors-preserve-everything}
	Given an $\bR_+$-functor $f:\bC\to\bD$,
	\begin{enumerate}[label=(\roman*)]
		\item $f$ sends Cauchy sequences to Cauchy sequences.
    \item $f$ sends a limit of a sequence $(x_n)_{n\in\mathbb{N}}$ to a limit
    of $(f(x_n))_{n\in\mathbb{N}}$.
    \item $f$ sends equivalent sequences in $\mathbb{C}$ to equivalent
    sequences in $\mathbb{D}$.
	\end{enumerate}
  Furthermore, if $f$ is fully faithful, it reflects Cauchy sequences, limits,
  and equivalent sequences in its image.
\end{lemma}

\begin{lemma}\label{prop:yoneda_fully_faithful}
  Given an $\bR_+$-category $\bC$, the Yoneda embedding
  $\cY_\bC:\bC\to{(\mathbb{R}_+)}_\bC$ as defined in
  \autoref{def:R-yoneda-embedding} is a fully faithful $\bR_+$-functor (in the
  sense of \autoref{def:collected_R+defs}).
\end{lemma}
\begin{proof}
  First, it is straightforward to check that for each object $c$ in $\bC$, the
  assignment $c'\mapsto\bC(c',c)$ defines a valid $\bR_+$-enriched presheaf
  $\bC^\op\to\bR_+$. Let $x, y\in\CC$. By unraveling definitions, we get
  that
  \begin{align*}
    (\mathbb{R}_+)_{\CC}(\cY(x),\cY(y))
    &=\sup_{z\in\bC}\max(\CC(z,y)-\CC(z,x),0) \\
    &\geq\max(\mathbb{C}(x,y)-\mathbb{C}(x,x),0) \\
    &=\bC(x,y).
  \end{align*}
  
  Proceeding to the other inequality, we begin with the triangle inequality and apply \autoref{lemma:arithmetic_in_R+}(iii)
\begin{align*}
    \bC(z,y) &\leq \bC(z,x) +\bC(x,y)  & \\
    \bC(z,y) - \bC(z,x) &\leq \bC(x,y) &\text{(\autoref{lemma:arithmetic_in_R+}(iii))}\\
\end{align*}

Applying supremum and maximum produces the following inequality: 

  \[
    (\mathbb{R}_+)_{\CC}(\cY(x),\cY(y))
    =\sup_{z\in\bC}\max(\CC(z,y)-\CC(z,x),0)\leq \sup_{z\in\CC}\max(\CC(x, y), 0)
    =\CC(x,y).\qedhere
  \]
\end{proof}

\begin{lemma}\label{prop:ell_x_is_short_map}
  Given a symmetric $\bR_+$-category $\bC$ and a Cauchy sequence $(x_n)$ in
  $\bC$, the function $\ell_{(x_n)}$  of
  \autoref{def:presheaf_associated_to_cauchy_sequence} is an $\bR_+$-functor
  from $\bC^\op$ to $\bR_+$.
\end{lemma}
\begin{proof}
  Let $y, z\in\CC$, then
	\begin{align*}
    \bR_+(\ell_{(x_n)}(y),\ell_{(x_n)}(z))
    &=\max\left(\lim_{n\to\infty}\bC(z,x_n)-\lim_{n\to\infty}\bC(y,x_n),0\right) \\
    &\leq\max\left(\lim_{n\to\infty}\bC(z,y),0\right) &\text{(\autoref{lemma:rev_tri_ineq_limit_version})} \\
		&=\bC(z,y)=\bC(y,z).\qedhere
	\end{align*}
\end{proof}

\begin{proposition}\label{prop:equivalent_Cauchy_characterization}
	Let $\bC$ be a symmetric $\RR_+$-category and $f:\CC^\op\to \RR_+$ an $\RR_+$-enriched presheaf. The following are equivalent. 
	\begin{enumerate}
		\item The presheaf $f$ is a limit (in $(\RR_+)_\CC$) of a Cauchy sequence of objects in the image of the Yoneda embedding. 
		\item The presheaf $f$ has a dual, i.e., $f$ belongs to the Cauchy completion $\ol\bC$ of $\bC$ (\autoref{def:cauchy_completion}).
    \item There is a Cauchy sequence $(x_n)$ in $\bC$ such that
    $f=\ell_{(x_n)}$ (where $\ell_{(x_n)}$ is as defined in
    \autoref{def:presheaf_associated_to_cauchy_sequence}).
	\end{enumerate}
    This is \autoref{prop:equivalent_Cauchy_characterization_fake}
\end{proposition}
\begin{proof}
  We first note that the argument of \cite[Example 3]{BorceuxDejean} extends,
  \emph{mutatis mutandis}, to show the equivalence of (2) and (3). We now show
  that $(1)\implies(3)$. Let $f$ be a limit of some Cauchy sequence
  $(\cY_\bC(x_n))$ in $(\mathbb{R}_+)_\bC$, where $(x_n)$ is a sequence in
  $\bC$. In particular, the Yoneda embedding $\cY_\bC$ is a fully faithful
  $\bR_+$-functor by \autoref{prop:yoneda_fully_faithful}, so by
  \autoref{prop:R-functors-preserve-everything}, $(x_n)$ is a Cauchy sequence
  in $\bC$. We claim that $f=\ell_{(x_n)}$. Let $y\in\bC$. Then we want to show
  \[
    \ell_{(x_n)}(y)
    =\lim_{n\to\infty}\bC(y,x_n)
    =f(y).
  \]
  Let $\vare>0$. Because $f$ is a limit of the sequence $(\cY_\bC(x_n))$, there
  exists $N\in\bN$ such that
  \[
    n\geq N
    \implies
    \max\left((\mathbb{R}_+)_\bC(f,\cY_\bC(x_n)),{(\bR_+)}_\bC(\cY_\bC(x_n),f)\right)<\varepsilon.
  \]
	Then, for every fixed $n\in\bN$, we have
	\begin{align*}
    |f(y)-\mathbb{C}(y,x_n)|
    &=\max\left(\bR_+(f(y),\mathbb{C}(y,x_n)),\bR_+(\mathbb{C}(y,x_n),f(y))\right) \\
    &\leq\max\left(\sup_{z\in\mathbb{C}}\bR_+(f(z),\mathbb{C}(z,x_n)),\sup_{z\in\mathbb{C}}\bR_+(\mathbb{C}(z,x_n),f(z))\right) \\
    &=\max\left({(\mathbb{R}_+)}_\mathbb{C}(f,\cY_\mathbb{C}(x_n)),{(\bR_+)}_\mathbb{C}(\cY_\mathbb{C}(x_n),f)\right)<\vare.
	\end{align*}
  Hence the sequence $(\mathbb{C}(y,x_n))$ limits to $f(y)$, so that
  $\ell_{(x_n)}(y)=f(y)$, because limits in $\mathbb{R}_+$ are unique. Thus,
  $f=\ell_{(x_n)}$, so we've shown (1)$\implies$(3).
	
  We now wish to show that (3)$\implies$(1). Let $\ell_{(x_n)}$ be the presheaf
  determined by some Cauchy sequence $(x_n)$ in $\CC$. Then by
  \autoref{prop:R-functors-preserve-everything}, since $(x_n)$ is a Cauchy
  sequence, so is $(\cY_\bC(x_n))$. We claim $\ell_{(x_n)}$ is a limit of
  this sequence. Let $\vare>0$. We want to find some $N\in\bN$ such that
	\begin{equation}\label{eq:1}
    \max\left({(\mathbb{R}_+)}_\bC(\cY_\bC(x_n),\ell_{(x_n)}),{(\bR_+)}_\bC(\ell_{(x_n)},\cY_\bC(x_n))\right)<\vare
    \quad\text{for all }n\geq N.
	\end{equation}
	Since $(x_n)$ is a Cauchy sequence, there exists $N\in\bN$ such that for all $n\geq N$, we have
	\[\bC(x_n,x_m)<\frac\vare2.\]
	Then given $n\geq N$,
	\begin{align*}
    &\max({(\mathbb{R}_+)}_\bC(\cY_\bC(x_n),\ell_{(x_n)}),{(\bR_+)}_\bC(\ell_{(x_n)},\cY_\bC(x_n))) \\
		&=\max\left(\sup_{z\in\bC}\bR_+\left(\bC(z,x_n),\lim_{m\to\infty}\bC(z,x_m)\right),\sup_{z\in\bC}\bR_+\left(\lim_{m\to\infty}\bC(z,x_m),\bC(z,x_n)\right)\right) \\
		&\leq\sup_{z\in\bC}\left|\bC(z,x_n)-\lim_{m\to\infty}\bC(z,x_m)\right| \\
		&=\sup_{z\in\bC}\left|\lim_{m\to\infty}\bC(z,x_n)-\lim_{m\to\infty}\bC(z,x_m)\right| \\
		&\overset{(\ast)}\leq\sup_{z\in\bC}\lim_{m\to\infty}\bC(x_n,x_m) \\
		&=\lim_{m\to\infty}\bC(x_n,x_m),
	\end{align*}
  where $(\ast)$ follows by
  \autoref{lemma:rev_tri_ineq_limit_version}. Since $\bC(x_n,x_m)<\vare$
  for all $m\geq N$ (because $n\geq N$), we have by
  \autoref{prop:lim_of_comparables} that
  \[
    \lim_{m\to\infty}\bC(x_n,x_m)\leq\frac\vare2<\vare.
  \]
  Hence, the desired inequality (\autoref{eq:1}) has been shown, $\ell_{(x_n)}$
  is the limit of the sequence $(\cY_\bC(x_n))$.
\end{proof}

As a consequence of the proof of
\autoref{prop:equivalent_Cauchy_characterization}, we have the following
Corollary.

\begin{corollary}\label{cor:lim_iota_cauchy_seq}
  Given a Cauchy sequence $(x_n)$ in $\bC$, the presheaf $\ell_{(x_n)}$
  (\autoref{def:presheaf_associated_to_cauchy_sequence}) is a limit of the
  sequence $(\iota_\mathbb{C}(x_n))_{n\in\mathbb{N}}$ in
  $\overline{\mathbb{C}}$, where $\iota_{\mathbb{C}}:\mathbb{C}\to
  \overline{\mathbb{C}}\subseteq{(\mathbb{R}_+)}_{\mathbb{C}}$ is the Cauchy
  completion $\mathbb{R}_+$-functor (\autoref{remark:iota_C_Cauchy_completion})
  obtained by restricting the codomain of the $\mathbb{R}_+$-Yoneda embedding
  $\mathcal{Y}_\mathbb{C}:\mathbb{C}\to{(\mathbb{R}_+)}_{\mathbb{C}}$.
\end{corollary}

\begin{lemma}\label{prop:equiv-cauchy-same-presheaf} 
	Let $(x_n)$ and $(y_n)$ be Cauchy sequences in a symmetric $\RR_+$-category $\CC$. The following are equivalent: 
	\begin{enumerate}
		\item The Cauchy sequences $(x_n)$ and $(y_n)$ are equivalent, i.e., $(x_n)\sim (y_n)$.
		\item The preshezves $\ell_{(x_n)}$ and $\ell_{(y_n)}$ are \emph{equal}.
	\end{enumerate}
\end{lemma}
\begin{proof}
	First, assume (1).
  Let $z\in\CC$ be arbitrary. It suffices to show that $\ell_{(x_n)}(z) =
  \ell_{(y_n)}(z)$. To that end, an application of the triangle inequality
  gives us the following:
	\begin{align*}
		\ell_{(x_n)}(z)=\lim_{n\to\infty} \CC(z, x_n) &\leq \lim_{n\to\infty} \left(\CC(z, y_n) + \CC(y_n, x_n)\right) &&(\text{\autoref{prop:lim_of_comparables}})\\
		&= \lim_{n\to\infty}\CC(z, y_n) + \lim_{n\to\infty}\CC(y_n, x_n) &&(\text{\autoref{prop:lim_of_sum}})\\
		&= \lim_{n\to\infty}\CC(z, y_n)=\ell_{(y_n)}(z) &&(\text{since $(x_n)\sim(y_n)$}).
	\end{align*}
	Repeating this exact same argument but with the role of $x_n$ and $y_n$ swapped yields the reverse inequality.
	Now assume (2).  Then we have
	\begin{align*}
		\lim_{n\to\infty}\CC(x_n, y_n) &= \lim_{m\to\infty}\lim_{n\to\infty}\CC(x_m, y_n) &&\text{(\autoref{lemma:double_indexed_limit_can_be_exchanged_for_one_limit})}\\
		&= \lim_{m\to\infty}\lim_{n\to\infty}\CC(x_m, x_n) &&\text{(since $\ell_{(x_n)}(x_m) = \ell_{(y_n)}(x_m)$)}\\
		&= \lim_{n\to\infty}\CC(x_n, x_n)=0 &&\text{(\autoref{lemma:double_indexed_limit_can_be_exchanged_for_one_limit})},
	\end{align*}
	so that indeed $(x_n)\sim(y_n)$.
\end{proof}

\begin{lemma}\label{prop:Lmetric-on-cauchy-completion}
  Given two Cauchy sequences $(x_n)$ and $(y_n)$ in a symmetric
  ${\bR_+}$-category $\bC$, we have
  \[
    \overline\bC(\ell_{(x_n)},\ell_{(y_n)})=\lim_{n\to\infty}\CC(x_n,y_n),
  \]
  where $\overline{\mathbb{C}}\subseteq{(\mathbb{R}_+)}_{\mathbb{C}}$ denotes
  the Cauchy completion of $\mathbb{C}$, as defined in
  \autoref{def:cauchy_completion}.
\end{lemma}
\begin{proof}
  Unraveling definitions and applying the limit version of the reverse
  triangle inequality (\autoref{lemma:rev_tri_ineq_limit_version}) and symmetry
  yields that
  \begin{align*}
    \overline{\mathbb{C}}(\ell_{(x_n)},\ell_{(y_n)})
    &=\sup_{z\in\mathbb{C}}\max\left(\lim_{n\to\infty}\mathbb{C}(z,y_n)-\lim_{n\to\infty}\mathbb{C}(z,x_n),0\right) \\
    &\leq\sup_{z\in\mathbb{C}}\max\left(\lim_{n\to\infty}\mathbb{C}(y_n,x_n),0\right) \\
    &=\lim_{n\to\infty}\mathbb{C}(y_n,x_n)
    =\lim_{n\to\infty}\mathbb{C}(x_n,y_n).
  \end{align*}
  To show the opposite inequality holds, first note
	\begin{align*}
		\overline\bC(\ell_{(x_n)},\ell_{(y_n)})&=\sup_{z\in\bC}\max\left(\lim_{n\to\infty}\bC(z,y_n)-\lim_{n\to\infty}\bC(z,x_n),0\right)\\ 
		&\geq\lim_{m\to\infty}\max\left(\lim_{n\to\infty}\bC(x_m,y_n)-\lim_{n\to\infty}\bC(x_m,x_n),0\right).
	\end{align*}
  Now, define $D:=\lim_{n\to\infty}\mathbb{C}(x_n,y_n)$. We claim
  \[
    \lim_{m\to\infty}\max\left(\lim_{n\to\infty}\bC(x_m,y_n)-\lim_{n\to\infty}\bC(x_m,x_n),0\right)=D,
  \]
  that will yield the desired result. Let $\varepsilon>0$. By
  \autoref{lemma:double_indexed_limit_can_be_exchanged_for_one_limit}, we know
  that
  \[
    D=\lim_{m\to\infty}\lim_{n\to\infty}\bC(x_m,y_n)
    \quad\text{and}\quad
    \lim_{m\to\infty}\lim_{n\to\infty}\bC(x_m,x_n)
    =\lim_{n\to\infty}\bC(x_n,x_n)=0,
  \]
  so we can choose some $M\in\mathbb{N}$ such that for all $m\geq M$,
  \[
    \left|\lim_{n\to\infty} \CC(x_m,y_n)-D\right|<\frac{\varepsilon}{2} 
    \qquad\text{and}\qquad
    \lim_{n\to\infty} \CC(x_m,x_n)<\frac{\varepsilon}{2} .
  \]
	Then for $m\geq M$, we have
  \begin{multline*}
    \left|\left(\lim_{n\to\infty} \CC(x_m,y_n)-\lim_{n\to \infty} \CC(x_m,x_n)\right)-D\right| \\
    \overset{(\ast)}\leq \left|\lim_{n\to\infty} \CC(x_m,y_n)-D\right|+\left|\CC(x_m,x_n)\right| < \varepsilon,
  \end{multline*}
  where $(\ast)$ follows by \autoref{lemma:arithmetic_in_R+}(ii). Thus we've shown
  \[
    \lim_{m\to\infty}\left(\lim_{n\to\infty}\mathbb{C}(x_m,y_n)-\lim_{n\to\infty}\mathbb{C}(x_m,x_n)\right)=D.
  \]
  Because $D=\max(D,0)$, it suffices to check $x\mapsto\max(x,0)$ is continuous
  as a function $[-\infty,\infty]\to[-\infty,\infty]$ (i.e., that it commutes
  with taking limits of sequences $(x_n)$ in $[-\infty,\infty]$), and this is
  entirely standard.
  %
  %We now split into cases.
	%
	%\textbf{Case 1:} $D=0$. In this case, we immediately have
  %\begin{multline*}
  %  \left|\max\left(\lim_{n\to \infty}\CC(x_m,y_n)-\lim_{n\to \infty} \CC(x_m,x_n),0\right)-D\right| \\
  %  \leq\left|\lim_{n\to\infty}\mathbb{C}(x_m,y_n)-\lim_{n\to\infty}\mathbb{C}(x_m,x_n)\right|
  %  <\varepsilon.
  %\end{multline*}
  %
  %\textbf{Case 2:} $D>0$. In this case, we can restrict to considering
  %$\varepsilon$ only when $0<\varepsilon<D$, in which case we have
	%\begin{align*}
	%	m\geq M
  %  &\implies \left|\left(\lim_{n\to\infty}\bC(x_m,y_n)-\lim_{n\to\infty}\bC(x_m,x_n)\right)-D\right|<D\\
  %  &\implies\lim_{n\to\infty}\bC(x_m,y_n)-\lim_{n\to\infty}\bC(x_m,x_n)>0,
	%\end{align*}
  %so that for $m\geq M$, we have
  %\begin{multline*}
  %  \left|\max\left(\lim_{n\to\infty}\mathbb{C}(x_m,y_n)-\lim_{n\to\infty}\mathbb{C}(x_m,x_n),0\right)-D\right| \\
  %  =\left|\left(\lim_{n\to\infty}\mathbb{C}(x_m,y_n)-\lim_{n\to\infty}\mathbb{C}(x_m,x_n)\right)-D\right|<\varepsilon
  %\end{multline*}
	%We thus see that 
	%\[
  %  \lim_{m\to\infty}\max\left(\lim_{n\to\infty}\bC(x_m,y_n)-\lim_{n\to\infty}\bC(x_m,x_n),0\right)
  %  =D
  %  \]
	%as desired. 
  %
  %\textbf{Case 3:} $D=\infty$. For every $\vare>0$, we can choose $M\in\bN$ such that for all $m\geq M$, 
  %\[
  %\lim_{n\to \infty} \CC(x_m,y_n)=\infty \quad \text{and} \quad \lim_{n\to \infty} \CC(x_m,x_n) <\vare. 
  %\]
  %and thus
  %\[
  %\lim_{m\to\infty}\max\left(\lim_{n\to\infty}\bC(x_m,y_n)-\lim_{n\to\infty}\bC(x_m,x_n),0\right)=\lim_{m\to \infty} \max(\infty,0)=\infty.
  %\]
  %as desired.
\end{proof}

\begin{lemma}\label{lemma:limits_can_be_moved_out_of_distance_function}
  Given a symmetric $\bR_+$-category $\bC$ and two Cauchy sequences $(x_n)$ and
  $(y_n)$ in $\mathbb{C}$ with limits $x$ and $y$ respectively, we have
	\[\lim_{n\to\infty}\bC(x_n,y_n)=\bC(x,y).\]
\end{lemma}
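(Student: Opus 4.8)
The plan is to reduce the statement to two facts already established about the Cauchy completion, the point being that ``$A$ is a limit of $\{a_n\}$'' is, on unwinding \autoref{def:limit} and \autoref{def:equivalence_of_Cauchy_seqs}, \emph{exactly} the assertion that the Cauchy sequence $\{a_n\}$ is equivalent to the constant Cauchy sequence $\wh A$ on $A$; likewise $\{b_n\}\sim\wh B$. Constant sequences are trivially Cauchy and $\{a_n\},\{b_n\}$ are Cauchy by hypothesis, so this equivalence takes place in the setting of \autoref{prop:equivalence_of_Cauchy_gives_equiv_relation}. So the first step is simply to record these two equivalences.

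Next I would invoke \autoref{prop:equiv-cauchy-same-presheaf}, according to which equivalent Cauchy sequences determine the \emph{same} presheaf, to obtain $\ell_{\{a_n\}}=\ell_{\wh A}$ and $\ell_{\{b_n\}}=\ell_{\wh B}$ in $(\RR_+)_\bC$. Then \autoref{prop:Lmetric-on-cauchy-completion}, which evaluates $\overline\bC(\ell_{\ol x},\ell_{\ol y})$ as $\lim_{n\to\infty}\bC(x_n,y_n)$, applies twice to give
\[
\lim_{n\to\infty}\bC(a_n,b_n)=\overline\bC\bigl(\ell_{\{a_n\}},\ell_{\{b_n\}}\bigr)=\overline\bC\bigl(\ell_{\wh A},\ell_{\wh B}\bigr)=\lim_{n\to\infty}\bC(A,B)=\bC(A,B),
\]
the last equality being the limit of the constant sequence $\{\bC(A,B)\}$. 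That completes the argument.

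There is no genuine obstacle here; the only care needed is to phrase the reduction to constant sequences precisely and to note that the argument is not circular, since neither \autoref{prop:equiv-cauchy-same-presheaf} nor \autoref{prop:Lmetric-on-cauchy-completion} invokes the present lemma. If one wishes to avoid the Cauchy completion entirely, there is a direct alternative built on \autoref{lemma:useful_limit_inequality_thingy}: applying inequality \autoref{eq:3} with $\wh A$ in the role of one of its three sequences, together with $\lim_{n\to\infty}\bC(a_n,A)=0$, gives $\lim_{n\to\infty}\bC(a_n,b_n)=\lim_{n\to\infty}\bC(A,b_n)$, and a second application with $\wh B$, using the symmetry of $\bC$, gives $\lim_{n\to\infty}\bC(A,b_n)=\bC(A,B)$; existence of every limit in sight is guaranteed by \autoref{lemma:lim_of_cauchy_seqs_exists}. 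The first route is the shorter, and is the one I would write up.
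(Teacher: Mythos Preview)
Your proof is correct and follows essentially the same route as the paper: both arguments pass to $\overline\bC$, identify $\ell_{\{a_n\}}$ with $\iota_\bC(A)=\ell_{\wh A}$ (you via \autoref{prop:equiv-cauchy-same-presheaf}, the paper via the uniqueness of limits in \autoref{cor:combining-previous-three-for-now-rename-later}), and then read off the equality from \autoref{prop:Lmetric-on-cauchy-completion}. Your check for non-circularity is apt, and the alternative direct argument via \autoref{lemma:useful_limit_inequality_thingy} is also valid.
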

\begin{proof}
  We have
  \[
    \lim_{n\to\infty}\mathbb{C}(x_n,y_n)
    =\overline{\mathbb{C}}(\ell_{(x_n)},\ell_{(y_n)})
    =\overline{\mathbb{C}}(\ell_{\widehat{x}},\ell_{\widehat{y}})
    =\overline{\mathbb{C}}(\mathcal{Y}_\mathbb{C}(x),\mathcal{Y}_\mathbb{C}(y))
    =\mathbb{C}(x,y),
  \]
  where the first equality is \autoref{prop:Lmetric-on-cauchy-completion}, the
  second equality is \autoref{prop:equiv-cauchy-same-presheaf} (since clearly
  $x_n\to x$ if and only if $(x_n)$ and $\widehat{x}$ are equivalent Cauchy
  sequences, by definition), the third equality follows by unraveling
  definitions, and the last equality is because the $\mathbb{R}_+$-Yoneda
  embedding is fully faithful (\autoref{prop:yoneda_fully_faithful}).
\end{proof}

\begin{lemma}\label{prop:R-cauchy-complete-is-cauchy-complete}
  Every Cauchy sequence in $\ol\CC$ has a limit.
\end{lemma}
\begin{proof}
  Suppose $(\ell_n)_{n\in\mathbb{N}}$ is a Cauchy sequence in
  $\overline{\mathbb{C}}$, so by
  \autoref{prop:equivalent_Cauchy_characterization}, for each $n\in\mathbb{N}$
  there exists a Cauchy sequence $\overline{x_n}=(x_{n,m})_{m\in\mathbb{N}}$ in
  $\mathbb{C}$ such that $\ell_n=\ell_{\overline{x_n}}$. By
  \autoref{cor:lim_iota_cauchy_seq}, each $\ell_{\ol{x_n}}$ is a limit of the
  sequence $(\iota_\bC(x_{n,m}))_{m\in\mathbb{N}}$ in $\ol\bC$. By
  \autoref{lemma:diagonal-is-equivalent}, we can therefore pick a diagonal
  sequence $(\iota_{\mathbb{C}}(d_n))_{n\in\mathbb{N}}$ in the image of
  $\iota_\bC$ satisfying $(\iota_{\mathbb{C}}(d_n))\sim(\ell_{\ol{x_n}})$. The
  sequence $(\iota_{\mathbb{C}}(d_n))$ is Cauchy by
  \autoref{lemma:equivalents-and-cauchy-coincide}(ii), and $\iota_\mathbb{C}$
  is fully faithful, so $(d_n)$ is Cauchy in $\mathbb{C}$ again by
  \autoref{lemma:equivalents-and-cauchy-coincide}. Thus by
  \autoref{cor:lim_iota_cauchy_seq} we have that $\ell_{(d_n)}$ is a limit of
  $(\iota_{\mathbb{C}}(d_n))$ in $\overline{\mathbb{C}}$. Finally, since
  $(\ell_n)$ is equivalent to $(\iota_\mathbb{C}(d_n))$, it follows by
  \autoref{lemma:equivalents-and-cauchy-coincide}(iii) that $\ell_{(d_n)}$ is a
  limit of $(\ell_n)$, as desired.
\end{proof}

\begin{lemma}\label{cor:R-Cauchy_completion_is_gaunt}
  The Cauchy completion of $\overline{\mathbb{C}}$ of any symmetric
  $\RR_+$-category $\mathbb{C}$ is gaunt, and thus has unique limits when they
  exist (\autoref{prop:isomorphic_limits}).
\end{lemma}
\begin{proof}
  Let $\ell_1,\ell_2\in \overline{\mathbb{C}}$ such that
  \[
    \overline{\mathbb{C}}(\ell_1,\ell_2)=0=\overline{\mathbb{C}}(\ell_2,\ell_1).
  \]
  By \autoref{prop:equivalent_Cauchy_characterization} there exists Cauchy
  sequences $(x_n)$ and $(y_n)$ in $\mathbb{C}$ such that $\ell_1=\ell_{(x_n)}$
  and $\ell_2=\ell_{(y_n)}$. Then the above equalities together with
  \autoref{prop:Lmetric-on-cauchy-completion} and
  \autoref{prop:equiv-cauchy-same-presheaf} yield the desired result.
\end{proof}

\begin{lemma}
  The following are equivalent for a symmetric $\mathbb{R}_+$-category
  $\mathbb{C}$.
  \begin{enumerate}[label=(\roman*)]
    \item $\iota_\bC:\bC\to\overline\bC$ is an ${\bR_+}$-equivalence of
    categories.
		\item Every Cauchy sequence in $\bC$ has a limit.
	\end{enumerate}
  Furthermore, if either of the above equivalent conditions hold, we say that
  $\bC$ is \emph{Cauchy complete}.
\end{lemma}
\begin{proof}
  First, we show (1)$\implies$(2). Suppose $\iota_\bC$ is an equivalence of
  categories, and let $(x_n)$ be a Cauchy sequence in $\bC$. By
  \autoref{prop:R-cauchy-complete-is-cauchy-complete}, $(\iota_\bC(x_n))$ is a
  Cauchy sequence in $\ol\bC$, so that it has a limit $\ell$ in $\ol\bC$. Then
  since $\iota_\bC$ is an equivalence, it is essentially surjective, so
  there exists $y\in\bC$ such that $\iota_\bC(y)\cong \ell$. Then by
  \autoref{prop:isomorphic_limits}, $\iota_\bC(y)$ is likewise a limit of
  $(\iota_\bC(x_n))$. Finally, by
  \autoref{prop:R-functors-preserve-everything}, since $\iota_{\mathbb{C}}$ is
  fully faithful, it reflects limits in its image, so that $y$ is a limit of
  $(x_n)$, as desired.
	
  Next we claim that (2)$\implies$(1). Suppose that every Cauchy sequence in
  $\bC$ has a limit. Then we want to show that $\iota_\bC$ is surjective, as
  $\iota_\bC$ is always a fully faithful $\bR_+$-functor. Let $\ell_{(x_n)}$ be
  a presheaf in $\ol\bC$ determined by a Cauchy sequence $(x_n)$ in $\bC$
  (\autoref{def:presheaf_associated_to_cauchy_sequence}). Then $(x_n)$
  has a limit $y$, so that $\iota_\bC(y)$ is a limit of $(\iota_\bC(x_n))$ by
  \autoref{prop:R-functors-preserve-everything}. Yet, $\ell_{(x_n)}$ is also a
  limit of $(\iota_\bC(x_n))$ (\autoref{cor:lim_iota_cauchy_seq}) and limits in
  $\ol\bC$ are unique by \autoref{cor:R-Cauchy_completion_is_gaunt}, so that
  necessarily $\iota_\bC(y)=\ell_{(x_n)}$. Therefore, $\iota_\bC$ is
  \textit{strictly} surjective and fully faithful, and thus an
  equivalence.
\end{proof}

\begin{lemma}\label{prop:induced_R-functor_between_completions}
	Given an ${\bR_+}$-functor $f:\bC\to\bD$ between symmetric $\bR_+$-categories, there exists a \emph{unique} induced ${\bR_+}$-functor $\overline f:\overline\bC\to\overline\bD$ such that the following diagram commutes.
	\[\begin{tikzcd}
		\bC & \bD \\
		\overline\bC & \overline\bD
		\arrow["f", from=1-1, to=1-2]
		\arrow["{\iota_\bD}", from=1-2, to=2-2]
		\arrow["{\iota_\bC}"', from=1-1, to=2-1]
		\arrow["{\overline f}", from=2-1, to=2-2]
	\end{tikzcd}\]
\end{lemma}
\begin{proof}
  By \autoref{prop:equivalent_Cauchy_characterization}, every object in
  $\overline{\mathbb{C}}$ is of the form $\ell_{(x_n)}$ for some Cauchy
  sequence $(x_n)$ in $\mathbb{C}$. Now, define
  $\overline{f}:\overline{\mathbb{C}}\to\overline{\mathbb{D}}$ by
  $\overline{f}(\ell_{(x_n)})=\ell_{(f(x_n))}$. This assignment is well-defined
  and clearly makes the diagram commute by Lemmas
  \ref{prop:R-functors-preserve-everything} and
  \ref{prop:equiv-cauchy-same-presheaf}. It is an $\mathbb{R}_+$-functor by
  Lemmas \ref{prop:Lmetric-on-cauchy-completion} and
  \ref{prop:lim_of_comparables}. It only remains to show that $\ol f$ is
  unique. Suppose $\widetilde{f}:\ol\bC\to\ol\bD$ is an $\mathbb{R}_+$-functor
  satisfying $\widetilde{f}\circ\iota_\bC=\iota_\bD\circ f$. By
  \autoref{cor:R-Cauchy_completion_is_gaunt}, since  $\overline{\mathbb{C}}$
  is gaunt, to show that $\ol
  f=\widetilde{f}$,  it suffices to show that
  \[
    \ol\bD(\ol f(\ell_{(x_n)}),\widetilde{f}(\ell_{(x_n)}))=0
  \]
  for all Cauchy sequences $(x_n)$ in $\mathbb{C}$. Let $(x_n)$ be a Cauchy sequence in $\mathbb{C}$. Then,
  
	\begin{align*}
    \ol\bD(\ol f(\ell_{(x_n)}),\widetilde{f}(\ell_{(x_n)}))
    &=\ol\bD\left(\ol f\left(\lim_{n\to\infty}\iota_\bC(x_n)\right),\widetilde{f}\left(\lim_{n\to\infty}\iota_\bC(x_n)\right)\right) &(\text{\autoref{cor:lim_iota_cauchy_seq}}) \\
    &=\ol\bD\left(\lim_{n\to\infty}\ol f(\iota_\bC(x_n)),\lim_{n\to\infty}\widetilde{f}(\iota_\bC(x_n))\right) &(\text{\autoref{prop:R-functors-preserve-everything}}) \\
    &=\ol\bD\left(\lim_{n\to\infty}\iota_\bD(f(x_n)),\lim_{n\to\infty}\iota_\bD(f(x_n))\right)=0.&\qedhere
	\end{align*}
\end{proof}

\begin{lemma}\label{prop:R_pastoral_equiv}
  Given an ${\bR_+}$-functor $f:\bC\to\bD$ between symmetric
  $\mathbb{R}_+$-categories, the following are equivalent.
	\begin{enumerate}[label=(\arabic*)]
    \item $f$ is fully faithful and dense. 
    \item The $\mathbb{R}_+$-functor $\overline f:\overline\bC\to\overline\bD$
    (as defined in \autoref{prop:induced_R-functor_between_completions}) is
    fully faithful and strictly surjective.
    \item The $\mathbb{R}_+$-functor $\overline f:\overline\bC\to\overline\bD$
    is an ${\bR_+}$-equivalence of categories.
	\end{enumerate}
\end{lemma}
\begin{proof}
  We first show (1)$\implies$(2). Suppose $f:\bC\to\bD$ is fully faithful and
  dense. Let $(x_n)$ and $(y_n)$ be Cauchy sequences in $\bC$. We have
  \[
    \overline{\mathbb{D}}(\ol f(\ell_{(x_n)}),\overline{f}(\ell_{(y_n)}))
    =\lim_{n\to\infty}\bD(f(x_n),f(y_n))
    =\lim_{n\to\infty}\bC(x_n,y_n)
    =\ol\bC(\ell_{(x_n)},\ell_{(y_n)}),
  \]
  where the first and last equality follow by
  \autoref{prop:Lmetric-on-cauchy-completion}. Hence, $\ol f$ is fully
  faithful. Finally, we claim that $\ol f$ is surjective. Let $\ell_{(y_n)}$ be
  a presheaf in $\ol\bD$ determined by some Cauchy sequence $(y_n)$ in
  $\bD$. We want to show that there exists a Cauchy sequence $(x_n)$ in $\bC$
  such that $\ol f(\ell_{(x_n)})=\ell_{(y_n)}$. Since $f$ is dense, for each
  $n\in\bN$, there exists a sequence $(x_{n,k})_{k\in\bN}$ in $\bC$ such that
  $y_n$ is a limit of $(f(x_{n,k}))_{k\in\bN}$. Then by
  \autoref{lemma:diagonal-is-equivalent}, there is a sequence $(x_n)$ in $\bC$
  such that the sequence $(f(x_n))$ is equivalent to $(y_n)$. Then because $f$
  is fully faithful and $(f(x_n))$ is a Cauchy sequence, $(x_n)$ is a Cauchy
  sequence in $\bC$, by \autoref{prop:R-functors-preserve-everything}. Then by
  \autoref{prop:equiv-cauchy-same-presheaf}
  \[
    \ol f(\ell_{(x_n)})
    =\ell_{(f(x_n))}
    =\ell_{(y_n)}.
  \]
  Thus, $\overline{f}$ is surjective.

  That (2)$\implies$(3) is clear.

  Finally, we show that (3)$\implies$(1). Suppose $\ol f$ is an equivalence of
  categories. Since both $\ol f$ and $\iota_\bC$ are fully faithful, we know
  that $\iota_\bD\circ f=\ol f\circ\iota_\bC$ is fully faithful. Furthermore,
  since $\iota_\bD$ is fully faithful, we have for all $x,y\in\bC$ that
  \[\bD(f(x),f(y))=\ol\bD(\iota_\bD(f(x)),\iota_\bD(f(y)))=\bC(x,y).\] Hence,
  $f$ is likewise fully faithful. Let $d\in\bD$. Since $\ol f$ is
  essentially surjective, there exists some Cauchy sequence $(x_n)$ in
  $\mathbb{C}$ such that $\ol f(\ell_{(x_n)})\cong\iota_\bD(d)$. In particular,
  this means:
  \[
    \ol\bD(\ol f(\ell_{(x_n)}),\iota_\bD(d))
    =\ol\bD(\ell_{f((x_n))},\ell_{\wh d})
    =\lim_{n\to\infty}\bD(f(x_n),d)
    =0,
  \]
  where the second equality follows by
  \autoref{prop:Lmetric-on-cauchy-completion}. Hence, $d$ is a limit of
  $(f(x_n))$, and so $f$ is dense.
\end{proof}

The following proofs concern the $\bR_+$-category $\Seq$ defined in \autoref{def:Seq}.

\begin{lemma}\label{lemma:convergent-Cauchy-const-or-inf}
	Every Cauchy sequence in $\Seq$ is either eventually constant or equivalent to $(n)_{n\in\NN}$. 
\end{lemma} 
\begin{proof}
	Let $(x_n)$ be a Cauchy sequence in $\Seq$ that is not eventually constant. We first claim that $(x_n)$ grows arbitrarily large. Let $n\in\bN$. Since $(x_n)$ is Cauchy, there exists $N(n)\in\bN$ such that
	\[m,k\geq N(n)\implies\Seq(x_m,x_k)<\frac1{2^n}.\]
	Now we claim $x_m\geq n$ for all $m\geq N(n)$. To see this, let $m\geq N(n)$. Then since $(x_n)$ isn't eventually constant, there exists $k\geq m$ such that $x_k\neq x_m$. Then we have
	\[\frac1{2^{\min(x_k,x_m)}}\leq\sum_{i=\min(x_k,x_m)}^{\max(x_k,x_m)-1}\frac1{2^i}=\Seq(x_k,x_m)<\frac1{2^n}\implies n<\min(x_k,x_m)\leq x_m,\]
	as desired. 

	Now, to see $(x_n)\sim(n)$, let $\vare>0$, and choose $n_\vare\in\bN$ such that $\sum_{i=n_\vare}^\infty\frac1{2^i}<\vare$.\footnote{Such an $n_\vare$ exists because $\sum_{i=1}^\infty\frac1{2^i}$ converges to $1$.} Then for all $n\geq\max(N(n_\vare),n_\vare)$, we have $x_n,n\geq n_\vare$, so that
	\[\Seq(x_n,n)=\sum_{i=\min(x_n,n)}^{\max(x_n,n)-1}\frac1{2^i}<\sum_{i=n_\vare}^\infty\frac1{2^i}<\vare.\]
  Thus we've shown $\lim_{n\to\infty}\Seq(x_n,n)=0$, and $\Seq$ is symmetric,
  so $(x_n)$ and $(n)$ are equivalent Cauchy sequences, as desired.
\end{proof}

By the above lemma and \autoref{prop:equivalent_Cauchy_characterization}, we
have the following result.

\begin{corollary}\label{cor:description_of_seq_bar}
	Every element of $\ol\Seq$ is equal to either $\ell_{\wh m}$ for some $m\in\bN$ or $\ell_{(n)}$.
\end{corollary}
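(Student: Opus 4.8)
The plan is to obtain the corollary as a two-line consequence of three facts already in place: the description of objects of $\ol\Seq$ as presheaves associated to Cauchy sequences, the classification of Cauchy sequences in $\Seq$, and the fact that equivalent Cauchy sequences induce the same presheaf.

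\textbf{Step 1.} By \autoref{def:cauchy_completion} (equivalently, by condition (3) of \autoref{prop:equivalent_Cauchy_characterization_fake}), every object of $\ol\Seq$ has the form $\ell_{\ol x}$ for some Cauchy sequence $\ol x=\{x_k\}_{k\in\bN}$ in $\Seq$. Hence it suffices to show that for each such $\ol x$, the presheaf $\ell_{\ol x}$ equals $\ell_{\wh m}$ for some $m\in\bN$ or equals $\ell_{\{n\}}$.

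\textbf{Step 2.} Invoke \autoref{remark:convergent-Cauchy-const-or-inf}: any Cauchy sequence in $\Seq$ is either eventually constant — say eventually equal to some $m\in\bN$, in which case $\ol x\sim\wh m$ — or equivalent to the sequence $\{n\}_{n\in\bN}$, so $\ol x\sim\{n\}$. If a self-contained justification of this dichotomy is wanted, the key estimate is $\Seq(n,m)\ge 2^{-\min(n,m)}$ for $n\ne m$ (it is at least the first term of the defining sum), so a bounded Cauchy sequence in $\Seq$ takes only finitely many values and is therefore eventually constant; an unbounded Cauchy sequence has terms exceeding any bound, and the complementary estimate $\Seq(n,m)\le 2^{-\min(n,m)+1}$ makes comparison of geometric tails force equivalence with $\{n\}$.

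\textbf{Step 3.} Apply \autoref{prop:equiv-cauchy-same-presheaf}: equivalent Cauchy sequences determine equal presheaves, so $\ell_{\ol x}=\ell_{\wh m}$ in the first case and $\ell_{\ol x}=\ell_{\{n\}}$ in the second, which is exactly the claim.

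The only step carrying any content is the dichotomy of \autoref{remark:convergent-Cauchy-const-or-inf}; granting it, the corollary is a straight chain of citations. If that remark has not been argued separately, the main obstacle is the unbounded case of Step 2 — verifying that an unbounded Cauchy sequence in $\Seq$ is genuinely \emph{equivalent} to $\{n\}$, not merely has $\{n\}$ as a subsequence — but this is a routine estimate with the geometric tails $\Seq(N_k,N_l)\le 2^{-\min(N_k,N_l)+1}$ and presents no real difficulty.
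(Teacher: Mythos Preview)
Your proposal is correct and follows exactly the approach the paper intends: the corollary is stated immediately after \autoref{remark:convergent-Cauchy-const-or-inf} with no proof, so it is meant to be read as the direct combination of that remark with \autoref{prop:equiv-cauchy-same-presheaf} and the description of $\ol\Seq$ from \autoref{def:cauchy_completion}, which is precisely what you do.
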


\begin{lemma}\label{prop:Seq_pick_out_subsequence}
  Let $\mathbb{C}$ be a small $\mathbb{R}_+$-category. Given a Cauchy sequence
  $(x_n)$ in $\bC$, there exists an ${\bR_+}$-functor $f:\Seq\to\bC$ such that
  the Cauchy sequence $(f(n))_{n\in\bN}$ is a subsequence of $(x_n)$.
\end{lemma}
\begin{proof}
  Since $(x_n)$ is Cauchy, for each $n\in\mathbb{N}$ there exists some
  $N(n)\in\mathbb{N}$ such that
  $\mathbb{C}(x_m,x_k),\mathbb{C}(x_k,x_m)\leq\frac{1}{2^n}$ whenever $m,k\geq
  N(n)$. Moreover, we can take the assignment $n\mapsto N(n)$ to be strictly
  increasing, by inductively redefining
  \[
    N(1)=N(1)
    \qquad\text{and}\qquad
    N(n+1):=\max(N(n)+1,N(n+1)).
  \]
  Now define $f(n)=x_{N(n)}$, so clearly $(f(n))$ is a subsequence of
  $(x_n)$. Moreover, $f$ is an $\mathbb{R}_+$-functor: Clearly
  $\mathbb{C}(f(n),f(n+1))\leq\frac{1}{2^n}$ for all $n\in\mathbb{N}$, and then
  for $n<m$, by the triangle inequality we get
  \[
    \mathbb{C}(f(n),f(m))
    \leq\sum_{i=1}^{m-1}\mathbb{C}(f(i),f(i+1))
    \leq\sum_{i=n}^{m-1}\frac{1}{2^i}
    =\Seq(n,m)
  \]
  (and a similar argument yields $(\mathbb{C}(f(m),f(n))\leq\Seq(n,m)
  =\Seq(m,n)$).
\end{proof}

\begin{lemma}\label{lemma:convergent_Cauchy_factors_through_seq_bar}
  Given a symmetric ${\bR_+}$-category $\bC$, an ${\bR_+}$-functor
  $f:\Seq\to\bC$ factors through the inclusion
  $\iota_\Seq:\Seq\into\overline\Seq$ (i.e., there exists some dashed map $g$
  that makes the following diagram commute)
	\[\begin{tikzcd}
		\Seq && \bC \\
		& \overline\Seq
		\arrow["f", from=1-1, to=1-3]
		\arrow["{\iota_\Seq}"', hook, from=1-1, to=2-2]
		\arrow["{g}"', dashed, from=2-2, to=1-3]
	\end{tikzcd}\]
  if and only if the sequence $\{f(n)\}$ has a limit in $\bC$, in which case
  the dashed map sends the presheaf $\ell_{(n)}$ to a limit of
  $\{f(n)\}$. Furthermore, there exists a unique such $g$ sending
  $\ell_{\{m\}}$ to $c$ for each distinct limit $c$ of $\{f(n)\}$.
\end{lemma}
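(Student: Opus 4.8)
The plan is to exploit the explicit description of $\overline{\Seq}$. By Corollary~\ref{cor:description_of_seq_bar} the objects of $\overline{\Seq}$ are precisely the presheaves $\ell_{\wh m}=\cY_{\Seq}(m)=\iota_{\Seq}(m)$ for $m\in\bN$, together with one extra object $\ell_{\{n\}}$, which by Corollary~\ref{cor:combining-previous-three-for-now-rename-later} is the (unique) limit in $\overline{\Seq}$ of the sequence $\{\iota_{\Seq}(n)\}$. Applying Proposition~\ref{prop:Lmetric-on-cauchy-completion} to constant sequences shows these objects are pairwise distinct: $\overline{\Seq}(\ell_{\wh m},\ell_{\wh{m'}})=\Seq(m,m')$, which is $>0$ when $m\neq m'$, and $\overline{\Seq}(\ell_{\wh m},\ell_{\{n\}})=\lim_{k\to\infty}\Seq(m,k)=2^{1-m}>0$. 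Since an $\bR_+$-functor is completely determined by its action on objects (once that object-assignment is checked to be a short map), this observation already delivers the uniqueness clause: any factorization $g$ satisfies $g(\ell_{\wh m})=g(\iota_{\Seq}(m))=f(m)$, so $g$ is pinned down as soon as $g(\ell_{\{n\}})$ is specified.

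For the ``only if'' direction, suppose $f=g\circ\iota_{\Seq}$. Because $\ell_{\{n\}}$ is a limit of $\{\iota_{\Seq}(n)\}$ and $\bR_+$-functors preserve limits (Proposition~\ref{prop:R-functors-preserve-everything}(2)), the object $g(\ell_{\{n\}})$ is a limit of $\{g(\iota_{\Seq}(n))\}=\{f(n)\}$. Thus $\{f(n)\}$ converges and any such $g$ must send $\ell_{\{n\}}$ to a limit of $\{f(n)\}$, exactly as claimed.

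For the ``if'' direction, let $c$ be any limit of $\{f(n)\}$; note $\{f(n)\}$ is automatically Cauchy, being the image under the short map $f$ of the Cauchy sequence $\{n\}$ in $\Seq$. Define $g$ on objects by $g(\ell_{\wh m}):=f(m)$ and $g(\ell_{\{n\}}):=c$; this is well-defined by the distinctness observed above. It remains to check $g$ is a short map. The only nonroutine comparison is $\overline{\Seq}(\ell_{\wh m},\ell_{\{n\}})\ge \bC(f(m),c)$: using Lemma~\ref{lemma:limits_can_be_moved_out_of_distance_function} to write $\bC(f(m),c)=\lim_{k\to\infty}\bC(f(m),f(k))$ and Proposition~\ref{prop:Lmetric-on-cauchy-completion} to write $\overline{\Seq}(\ell_{\wh m},\ell_{\{n\}})=\lim_{k\to\infty}\Seq(m,k)$, this follows from $\Seq(m,k)\ge\bC(f(m),f(k))$ (shortness of $f$) together with Proposition~\ref{prop:lim_of_comparables}. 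The comparison $\overline{\Seq}(\ell_{\wh m},\ell_{\wh{m'}})\ge\bC(f(m),f(m'))$ is just shortness of $f$; the reversed comparisons hold since $\Seq$ and $\bC$ are symmetric; and $\overline{\Seq}(\ell_{\{n\}},\ell_{\{n\}})=0=\bC(c,c)$. Then $g\circ\iota_{\Seq}=f$ holds by construction, and letting $c$ range over the limit points of $\{f(n)\}$ produces the family of factorizations in the final sentence.

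The only step with any real heft is verifying that the object-assignment $g$ is a short map in the ``if'' direction, and even this reduces to a two- or three-line case analysis built entirely on analytic facts already available in the appendix (Lemma~\ref{lemma:limits_can_be_moved_out_of_distance_function} and Propositions~\ref{prop:lim_of_comparables} and \ref{prop:Lmetric-on-cauchy-completion}); no genuinely new difficulty arises.
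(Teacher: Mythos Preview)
Your proposal is correct and follows essentially the same approach as the paper: both arguments use the explicit description of $\overline{\Seq}$ from Corollary~\ref{cor:description_of_seq_bar}, derive the ``only if'' direction from preservation of limits (Proposition~\ref{prop:R-functors-preserve-everything}), and verify the ``if'' direction by defining $g$ on objects and checking shortness via the same case analysis using Proposition~\ref{prop:Lmetric-on-cauchy-completion}, Proposition~\ref{prop:lim_of_comparables}, and Lemma~\ref{lemma:limits_can_be_moved_out_of_distance_function}. Your version is slightly more explicit about the distinctness of the objects of $\overline{\Seq}$ and how that yields uniqueness, which the paper leaves as ``straightforward to see.''
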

\begin{proof}
  Let $\bC$ be a symmetric $\bR_+$-category and let $f:\Seq\to\bC$ be an
  $\bR_+$-functor. First, suppose that $f$ factors through $\ol\Seq$ as
  $g\circ\iota_\Seq$. Then since $\ell_{(n)}$ is a limit of $(\iota_\Seq(n))$
  (\autoref{cor:lim_iota_cauchy_seq}), $g(\ell_{(n)})$ must be a limit of
  $(g(\iota_\Seq(n)))=(f(n))$ by \autoref{prop:R-functors-preserve-everything},
  so that indeed $(f(n))$ has a limit.
	
  Conversely, suppose that $(f(n))$ has a limit $c\in\bC$. By
  \autoref{cor:description_of_seq_bar}, in order to define an $\bR_+$-functor
  $g:\ol\Seq\to\bC$, it suffices to define $g(\ell_{(n)})$ and $g(\ell_{\wh
  m})$ for all $m\in\bN$, and show that $g$ is an $\mathbb{R}_+$-functor. It is
  then straightforward to see that we \emph{must} define
  \[
    g(\ell_{(n)})=c
    \qquad\text{and}\qquad
    g(\ell_{\wh m})=f(m).
  \]
  in order to get a map $\ol\Seq\to\bC$ that satisfies $f=g\circ\iota_\Seq$ and
  sends $\ell_{(n)}\mapsto c$. It remains to show that $g$ is an
  $\mathbb{R}_+$-functor. Let $\ell_{(x_n)}$ and $\ell_{(y_n)}$ be two
  presheaves in $\ol\Seq$ associated to Cauchy sequences $(x_n)$ and $(y_n)$ in
  $\Seq$. We wish to show that
  \[
    \ol\Seq(\ell_{(x_n)},\ell_{(y_n)})
    \geq\bC(g(\ell_{(x_n)}),g(\ell_{(y_n)})).
  \]
	In the case that $\ell_{(x_n)}=\ell_{(y_n)}$, this is clearly true. In the case that $\ell_{(x_n)}=\ell_{\wh i}$ and $\ell_{(y_n)}=\ell_{\wh j}$ for some distinct $i,j\in\Seq$, we have that:
	\[
	\begin{aligned}
	    	\ol\Seq(\ell_{(x_n)},\ell_{(y_n)})&=\ol\Seq(\iota_\Seq(i),\iota_\Seq(j))\\&=\Seq(i,j) \\
		&\geq\bC(f(i),f(j))\\
  &=\bC(g(\iota_\Seq(i)),g(\iota_\Seq(j)))\\
  &=\bC(g(\ell_{(x_n)}),g(\ell_{(y_n)})).
	\end{aligned}
	\]
	Now, suppose $\ell_{(x_n)}=\ell_{(n)}$ and $\ell_{(y_n)}=\ell_{\wh m}$ for some $m\in\bN$. Then
	\begin{align*}
		\ol\Seq(\ell_{(x_n)},\ell_{(y_n)})&=\lim_{n\to\infty}\Seq(n,m) &&(\text{\autoref{prop:Lmetric-on-cauchy-completion}}) \\
		&\geq\lim_{n\to\infty}\bC(f(n),f(m)) &&(\text{\autoref{prop:lim_of_comparables}}) \\
		&=\bC(c,f(m)) &&(\text{\autoref{lemma:limits_can_be_moved_out_of_distance_function}}) \\
		&=\bC(g(\ell_{(x_n)}),g(\ell_{(y_n)})).
	\end{align*}
  By \autoref{cor:description_of_seq_bar} and symmetry, we have covered all possible cases, so that indeed $g$ is an $\mathbb{R}_+$-functor.
\end{proof}

\section*{Acknowledgments}
This article was first written during the 2022 UVA Topology Research Experience
for Undergraduates (REU) to share our research with the other students. Thank
you to the organizers who made this program possible and special thanks to our
wonderful mentor Walker Stern, and graduate students Tanner Carawan and Miika
Tuominen who offered their advice and insight throughout the
project. Additional thanks to the NSF Research Training Grant (RTG) in Geometry
and Topology (DMS-1839968) that partly supported the REU. We are also grateful
to Javier Guti\'errez, for pointing out connections with existing work and
suggesting \autoref{rmk:localizations} to us. Finally, we extend our most
sincere gratitude to the anonymous referee, whose detailed and clear comments
greatly improved this work. 

%\section*{Declarations}
%\subsection*{Ethical Approval}
%Not Applicable
%
%\subsection*{Competing Interests}
%Authors do not have competing interests
%
%\subsection*{Author's Contributions}
%All authors (Isaiah Dailey, Clara Huggins, Semire Mujevic, and Chloe Shupe) worked in concert---during an REU---to write and edit the manuscript. All authors (I.D., C.H., S.M., C.S.) reviewed the manuscript.
%
%\subsection*{Funding}
%National Science Foundation, DMS-1839968
%
%\subsection*{Availability of data and materials}
%No data or materials were used

\bibliographystyle{spmpsci}
\bibliography{refs} 

\begin{thebibliography}{10}
\providecommand{\url}[1]{{#1}}
\providecommand{\urlprefix}{URL }
\expandafter\ifx\csname urlstyle\endcsname\relax
  \providecommand{\doi}[1]{DOI~\discretionary{}{}{}#1}\else
  \providecommand{\doi}{DOI~\discretionary{}{}{}\begingroup \urlstyle{rm}\Url}\fi

\bibitem{Bon}
Bonsangue, M., van Bruegel, F., Rutten, J.: Generalized metric spaces: Completion, topology, and powerdomains via the yoneda embedding.
\newblock Theoretical Computer Science \textbf{193}, 1--51 (1998).
\newblock \urlprefix\url{https://www.sciencedirect.com/science/article/pii/S030439759700042X}

\bibitem{BorceuxDejean}
Borceux, F., Dejean, D.: Cauchy completion in category theory.
\newblock Cahiers de topologie et géométrie différentielle catégoriques \textbf{27}(2), 133--146 (1986).
\newblock \urlprefix\url{http://www.numdam.org/item?id=CTGDC_1986__27_2_133_0}

\bibitem{MoritaHmtpy}
Caviglia, G., Gutiérrez, J.J.: Morita homotopy theory for $(\infty,1)$-categories and $\infty $-operads.
\newblock Forum Mathematicum \textbf{31}(3), 661--684 (2019)

\bibitem{SpivakFongSketches}
Fong, B., Spivak, D.: Seven Sketches in Compositionality: An Invitation to Applied Category Theory (2018)

\bibitem{HomotopyNerve}
Fritsch, R., Latch, D.M.: Homotopy inverses for nerve.
\newblock Mathematische Zeitschrift \textbf{177}(2), 147--179.
\newblock \doi{10.1007/BF01214196}.
\newblock \urlprefix\url{https://doi.org/10.1007/BF01214196}

\bibitem{Fritz}
Fritz, T.: Resource convertibility and ordered commutative monoids.
\newblock Mathematical Structures in Computer Science \textbf{27}(6), 850–938 (2017).
\newblock \doi{10.1017/S0960129515000444}

\bibitem{GoerssJardine}
Goerss, P.G., Jardine, J.F.: Simplicial homotopy theory, \emph{Prog. Math.}, vol. 174.
\newblock Basel: Birkh{\"a}user (1999).
\newblock \urlprefix\url{https://link.springer.com/book/10.1007/978-3-0346-0189-4}

\bibitem{R-CatComplete}
Hofmann, D., Reis, C.: Probabilistic metric spaces as enriched categories.
\newblock Fuzzy Sets and Systems \textbf{210}, 1--21 (2013).
\newblock \urlprefix\url{https://doi.org/10.1016/j.fss.2012.05.005}

\bibitem{Hovey_1999}
Hovey, M.: Model Categories.
\newblock American Mathematical Society (1999)

\bibitem{Jardine}
Jardine, J.F.: Metric spaces and homotopy types (2020).
\newblock \doi{10.48550/ARXIV.2012.09026}.
\newblock \urlprefix\url{https://arxiv.org/abs/2012.09026}

\bibitem{Catlocpres}
Kelly, G., Lack, S.: $\scr{V}$-cat is locally presentable or locally bounded if $\scr{V}$ is so.
\newblock Theory and Applications of Categories \textbf{8}(23) (2001)

\bibitem{kelly}
Kelly, G.M.: Basic concepts of enriched category theory.
\newblock Reprints in Theory and Applications of Categories  (2005)

\bibitem{Lawvere}
Lawvere, F.: Metric spaces, generalized logic and closed categories.
\newblock Reprints in Theory and Applications of Categories  (2002).
\newblock \urlprefix\url{http://www.tac.mta.ca/tac/reprints/articles/1/tr1abs.html}

\bibitem{Mihara}
Mihara, T.: Homotopy theory of ultrametric spaces.
\newblock Higher Structures  (2021).
\newblock \urlprefix\url{https://higher-structures.math.cas.cz/api/files/issues/Vol5Iss1/Mihara}

\bibitem{JcatLabModels}
Model structures on cat.
\newblock \urlprefix\url{https://ncatlab.org/joyalscatlab/published/Model+structures+on+Cat}

\bibitem{Quillen}
Quillen, D.G.: Homotopical algebra.
\newblock Lecture notes in mathematics. Springer-Verlag (1967)

\bibitem{RezkCat}
Rezk, C.: A model category for categories.
\newblock \urlprefix\url{https://ncatlab.org/nlab/files/Rezk_ModelCategoryForCategories.pdf}

\bibitem{RiehlCatHmptyThry}
Riehl, E.: Categorical homotopy theory, \emph{New Math. Monogr.}, vol.~24.
\newblock Cambridge: Cambridge University Press (2014).
\newblock \doi{10.1017/CBO9781107261457}

\bibitem{Rutten}
Rutten, J.: Weighted colimits and formal balls in generalized metric spaces.
\newblock Topology and its Applications \textbf{89}, 179--202 (1998).
\newblock \urlprefix\url{https://doi.org/10.1016/S0166-8641(97)00224-1}

\bibitem{CatBlog}
Schommer-Pries, C.: The canonical model structure on cat.
\newblock \urlprefix\url{https://sbseminar.wordpress.com/2012/11/16/the-canonical-model-structure-on-cat/}

\end{thebibliography}

% ------------------------------------------------------------------------
\end{document}